\documentclass[11pt, reqno]{amsart}
\usepackage{amssymb}
\usepackage{amsmath}
\usepackage{amsthm}
\usepackage{mathabx}
\usepackage{braket}
\usepackage{pdfpages}
\usepackage{graphicx}
\usepackage{caption}
\usepackage{subcaption}
\usepackage{mathrsfs} 
\usepackage{tikz-cd} 
\usepackage{bm}
\usepackage{enumitem}
\usepackage{mathtools}
\usepackage{pgfplots}
\usepackage{import}
\usepackage{xifthen}
\usepackage{transparent}
\usepackage{mathtools}

\pgfplotsset{compat=1.14}

\usepackage{tikz}
\usetikzlibrary{automata, arrows.meta, positioning}

\usepackage[utf8]{inputenc}
\usepackage[english]{babel}

\usepackage{hyperref}
\usepackage{cleveref}


\newcommand{%
    \def\svgwidth{\columnwidth}
    \import{./}{.pdf_tex}
}[1]{%
    \def\svgwidth{\columnwidth}
    \import{./}{#1.pdf_tex}
}

\usepackage{lmodern,babel,adjustbox,booktabs,multirow}

\crefformat{footnote}{#2\footnotemark[#1]#3}

\numberwithin{equation}{section}
\theoremstyle{plain}
\newtheorem{theorem}{Theorem}[section]

\newtheorem{prop}[theorem]{Proposition}
\newtheorem{conj}[theorem]{Conjecture}
\newtheorem{lem}[theorem]{Lemma}
\newtheorem{cor}[theorem]{Corollary}

\newtheorem*{question*}{Question}

\theoremstyle{plain}

\theoremstyle{definition}

\newtheorem{rmk}[theorem]{Remark}

\newtheorem*{remark}{Remark}

\newcommand{\R}{\mathbb{R}}
\newcommand{\C}{\mathbb{C}}

\newcommand{\Z}{\mathbb{Z}}
\newcommand{\Hyp}{\mathbb{H}}

\newcommand{\pcf}{post-critically finite }

\DeclareMathOperator{\sgn}{sgn}

\DeclareMathOperator{\Homeo}{Homeo}

\DeclareMathOperator{\Per}{Per}

\DeclareMathOperator{\PSL}{PSL}

\DeclareMathOperator{\Conf}{Conf}

\DeclareMathOperator{\Int}{Int}
\DeclareMathOperator{\chull}{Cvx\, Hull}

\DeclareMathOperator{\core}{core}

\numberwithin{figure}{section}

\begin{document}
\title[Quasiconformal non-equivalence]{On quasiconformal non-equivalence of gasket Julia sets and limit sets}
\begin{author}[Y.~Luo]{Yusheng Luo}
\address{Department of Mathematics, Cornell University, 212 Garden Ave, Ithaca, NY 14853, USA}
\email{yusheng.s.luo@gmail.com}
\thanks{Y.L. was partially supported by NSF Grant DMS-2349929.}
\end{author}
\begin{author}[Y.~Zhang]{Yongquan Zhang}
\address{Institute for Mathematical Sciences, Stony Brook University, 100 Nicolls Rd, Stony Brook, NY 11794-3660, USA}
\email{yqzhangmath@gmail.com}
\end{author}

\begin{abstract}
This paper studies quasiconformal non-equivalence of Julia sets and limit sets.
We proved that any Julia set is quasiconformally different from the Apollonian gasket.
We also proved that any Julia set of a quadratic rational map is quasiconformally different from the gasket limit set of a geometrically finite Kleinian group.
\end{abstract}

\maketitle

\setcounter{tocdepth}{1}
\tableofcontents

\section{Introduction}
It is a central question in quasiconformal geometry to classify fractal sets up to quasiconformal homeomorphisms.
For fractal sets that emerge in conformal dynamics, ample evidence suggests that it is possible to quasiconformally distinguish Julia sets and limit sets.
It is summarized as the following conjecture in \cite{LLMM19}.

\begin{conj}[\cite{LLMM19}]\label{conj:gqh}
Let $J$ be the Julia set of a rational map and $\Lambda$ be the limit set of a Kleinian group.
Suppose that $J$ and $\Lambda$ are connected, and not homeomorphic to a circle or a sphere.
Then $J$ is not quasiconformally homeomorphic to $\Lambda$.
\end{conj}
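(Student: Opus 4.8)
The plan is to distinguish the two sides by a quasiconformal invariant extracted from the \emph{self-similar} structure that each set inherits from its dynamics, and then to show that the two families of admissible local models are incompatible. Since a quasiconformal homeomorphism is in particular a homeomorphism, $J$ and $\Lambda$ must first agree topologically, so I would begin by invoking the topological classification of connected Julia sets and connected limit sets that are neither a circle nor a sphere, and split the argument along the resulting types: carpet-like, gasket-like, and the remaining connected types with cut points (such as dendrites). In every type the complement decomposes into canonical pieces --- Fatou components on the Julia side, components of the domain of discontinuity on the Kleinian side --- whose boundaries form a family of uniform quasicircles. A quasiconformal homeomorphism must carry one peripheral family to the other, so the first concrete task is to record this peripheral structure, together with the incidence pattern of how the pieces touch, as a quasiconformal invariant.

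The heart of the argument is the analysis of \emph{weak tangents}: pointed Gromov--Hausdorff limits of rescalings of the set about a point. On the Kleinian side, geometric finiteness makes the limit set homogeneous: at a conical limit point the weak tangent is a M\"obius blow-up of $\Lambda$ itself, while at a parabolic fixed point it is a \emph{cusped} model governed by the rank of the corresponding parabolic subgroup. The crucial feature is that all of the local self-similarities organizing these tangents come from the \emph{group}, hence are \emph{invertible}. I would make this precise by showing that the space of weak tangents of $\Lambda$, equipped with the germs of conformal self-similarities acting on them, carries a \emph{group} of symmetries, and that this package is a quasiconformal invariant: a quasiconformal homeomorphism of $\hCbb$ is quasisymmetric on compacta, so it acts on weak tangents by uniformly quasisymmetric maps and preserves the relevant equivalence classes.

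On the Julia side I would run the parallel classification using the dynamical dichotomy. In the geometrically finite (hyperbolic or parabolic) case $J$ is Ahlfors regular and quasi-self-similar, so its weak tangents are again controlled: generic points give self-similar models built from iterates of the expanding dynamics, and parabolic periodic points give cusped models governed by the petal number. The asymmetry I would exploit is that the rational map has degree greater than one and possesses critical points, so the local self-maps producing the Julia tangents are \emph{non-invertible}: at points in the grand orbit of a critical point the weak tangent carries a genuine folding of local degree $>1$, which cannot occur among the invertible M\"obius germs on the Kleinian side. The intended contradiction is that the semigroup of conformal self-similarities of the Julia weak-tangent package fails to be a group, and that this failure survives a quasisymmetric change of coordinates, so $J$ and $\Lambda$ cannot carry quasisymmetrically equivalent weak-tangent structures.

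The main obstacle is twofold. First, the critical orbit is only countable, hence \emph{a priori} quasiconformally negligible, so I must show that the folding fingerprint propagates through the dynamics to a set of points large enough to be seen quasiconformally; this is exactly where non-invertibility has to be converted from a pointwise phenomenon into a statement about a non-negligible family of weak tangents, and I expect it to be the technical crux of the geometrically finite case. Second, and more seriously, the entire weak-tangent machinery presupposes Ahlfors regularity and uniform quasi-self-similarity, which fail outside the geometrically finite world: Julia sets with Siegel disks, Cremer points, or infinite renormalization, and geometrically infinite Kleinian limit sets, have essentially uncontrolled blow-ups with no uniform local model to compare. Closing these cases is the decisive difficulty, and I anticipate it will require a genuinely new invariant rather than a refinement of the weak-tangent comparison above.
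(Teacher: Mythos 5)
The statement you set out to prove is Conjecture \ref{conj:gqh}, which is an \emph{open conjecture}: the paper does not prove it, and explicitly says only that the authors ``believe it is possible'' to extend their methods to the gasket case. What the paper actually establishes are two special cases, Theorems \ref{thm:thmA} and \ref{thm:thmB}, so there is no ``paper proof'' for your proposal to match, and your proposal --- which candidly concedes its decisive difficulties --- is a research program, not a proof. Beyond that framing issue, the concrete gaps are these. First, your opening step invokes ``the topological classification of connected Julia sets and connected limit sets'' into carpet-like, gasket-like, and cut-point types; no such classification theorem exists. Connected Julia sets include Siegel disks, Cremer hedgehogs, infinitely renormalizable examples, and positive-area examples, none of which fit your trichotomy, and even on the Kleinian side geometrically infinite limit sets escape it. Second, the core mechanism --- that the ``conformal self-similarity package'' of weak tangents is a group on the Kleinian side but only a semigroup on the Julia side, and that this distinction is a quasiconformal invariant --- is unsubstantiated at exactly the point you flag: folding occurs only along the countable grand orbit of the critical points, which is quasiconformally negligible, and a quasiconformal map only transports weak tangents up to quasisymmetry, under which a conformal semigroup can a priori sit inside a much larger quasisymmetric symmetry group. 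Converting pointwise non-invertibility into a quasisymmetrically visible invariant is precisely the missing idea, not a technical refinement. Third, your machinery presupposes Ahlfors regularity and uniform quasi-self-similarity, so all non-geometrically-finite cases on both sides are simply left open, as you acknowledge.

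It is worth contrasting your metric approach with what the paper does in the cases it can handle, since the difference is instructive. The paper avoids weak tangents entirely and uses two much more rigid invariants available in the gasket setting: the \emph{local} invariant that the angle of contact between two complementary components (tangency versus cusp versus positive angle) is preserved by quasiconformal maps, and the \emph{global} combinatorics of the contact graph. For a Julia set quasiconformally equivalent to a circle-packing limit set, all touching must be tangential (a ``fat gasket''), and the paper shows via Lemma \ref{lem:ep} that every touching point is eventually mapped to a single parabolic fixed point of parabolic index $2$, forcing the induced simplicial dynamics on the Fatou graph to have a unique fixed edge (Theorem \ref{thm:dfg}) and hence the graph to be bipartite (Theorem \ref{thm:thmC}) --- which already kills the Apollonian gasket, whose nerve contains a $3$-cycle. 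The quadratic case is then finished by Thurston obstruction theory to classify which fat gaskets arise, plus a rigidity analysis of $\Homeo(\mathcal{G})$ against the $\mathbb{Z}$-symmetry forced by a rank-one cusp (Lemma \ref{lem:pfp}). If you want to make progress on the conjecture, the paper's combinatorial route --- dynamical constraints on the contact graph confronted with group-theoretic symmetries of the limit set's nerve --- is currently the only method known to close any nontrivial case, and your weak-tangent program would need the new invariant you yourself identify before it yields even the geometrically finite gasket case.
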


In this paper, we will study this quasiconformal non-equivalence phenomenon for some classes of Julia set and limit set.
Our first result is
\begin{theorem}\label{thm:thmA}
No Julia set of a rational map is quasiconformally homeomorphic to the Apollonian gasket.
\end{theorem}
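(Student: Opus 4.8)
My plan is to argue by contradiction, transporting the rigid self-similar structure of the Apollonian gasket across a hypothetical quasiconformal map and then colliding it with the local rigidity of holomorphic dynamics. \textbf{Step 1 (Transporting the gasket structure).} Suppose $\varphi\colon\widehat{\mathbb{C}}\to\widehat{\mathbb{C}}$ were a quasiconformal homeomorphism with $\varphi(\Lambda)=J$, where $\Lambda$ is the Apollonian gasket and $J$ is the Julia set of a rational map $f$. Quasiconformal maps preserve the property that two Jordan domains have disjoint interiors with closures meeting in exactly one point; hence the complementary components of $J$ — which are precisely the Fatou components of $f$ — are quasidisks that are pairwise either disjoint or tangent at a single point, with no triple tangencies, and with the tangency points dense in $J$. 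Thus $J$ must be a \emph{gasket Julia set}: every Fatou component is tangent to infinitely many others along a set of tangency points dense in its boundary.

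\textbf{Step 2 (Dynamics of the tangency locus).} By Sullivan's no-wandering-domains theorem the Fatou components are preperiodic, so all of them are iterated preimages of finitely many periodic cycles, and the tangency locus $T$ is a countable, forward-invariant (away from the finitely many critical points) set dense in $J$. I would pass to a periodic Fatou component $U$ with first-return map $g=f^{m}\colon U\to U$. Because $\partial U$ is a quasicircle carrying an expanding boundary dynamics with dense tangency points, $U$ must be an attracting or parabolic basin — Siegel disks and Herman rings are excluded, since their rotational boundary dynamics cannot support a dense, forward-invariant, self-similar tangency structure. Localizing at a tangency point $p\in\partial U\cap\partial V$ that is periodic for the boundary dynamics, or at an accumulation point of a tangency orbit obtained by rescaling along the expanding boundary dynamics, reduces the problem to comparing two asymptotic self-similar models at $p$: the one coming from the gasket and the one coming from the local holomorphic return dynamics.

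\textbf{Step 3 (Incompatibility of the two local models --- the crux).} On the gasket side, every tangency point is a rank-one cusp of the Apollonian group: the two tangent circles osculate to second order, and the chain of inscribed disks filling each interstice between them is asymptotically modeled on a \emph{parabolic} M\"obius transformation, so these disks shrink at the polynomial rate $\asymp n^{-2}$ in a rigid, explicitly self-similar pattern. On the Julia side, the local return map at $p$ is a holomorphic germ whose multiplier is either of modulus greater than one (repelling) or a root of unity (parabolic). In the repelling case the Fatou components accumulating at $p$ shrink \emph{geometrically}, at a rate $\asymp\lambda^{-n}$, which is immediately incompatible with the gasket's polynomial rate — a contradiction detectable by comparing the quasiconformally invariant moduli of the annuli separating consecutive inscribed components. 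The only case matching the polynomial rate is a parabolic germ $z\mapsto z+az^{k+1}+\cdots$, and there I would compare the finer asymptotic shapes — the Fatou-coordinate horoball pattern of the germ against the cusp pattern of the Apollonian group — to rule out any bounded-dilatation matching.

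\textbf{Main obstacle.} The decisive difficulty is the parabolic case of Step 3: both the Apollonian cusp and a holomorphic parabolic germ exhibit polynomial rather than geometric shrinking, so no coarse size comparison suffices and one must isolate a genuinely finer quasiconformal invariant of the two self-similar configurations. The reductions in Step 2 are also delicate — in particular the exclusion of rotation domains and the verification that the boundary dynamics on $\partial U$ is expanding with a dense, forward-invariant tangency locus — and each of these would lean on the boundary-regularity theory of quasidisk Fatou components.
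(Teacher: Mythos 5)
There is a genuine gap, and it sits exactly where you locate your ``main obstacle'': the parabolic case of Step 3. That case is not a technical loose end to be patched later --- it is the entire problem, and no purely local comparison at a tangency point can close it. The paper itself proves that rational maps with fat gasket Julia sets exist (Theorem~\ref{thm:thmD}, Figure~\ref{fig:FG}), and in such a Julia set every tangency point is eventually mapped, without critical points on the way (Lemma~\ref{lem:njc}), onto a parabolic fixed point of index $2$; there the adjacent Fatou components shrink at the polynomial rates $\operatorname{diam}\asymp n^{-3/2}$ at distance $\asymp n^{-1/2}$ from the tangency. These rates are carried precisely to the Apollonian cusp rates ($\operatorname{diam}\asymp n^{-2}$ at distance $\asymp n^{-1}$) by the explicit $2$-quasiconformal radial stretch $z\mapsto z\lvert z\rvert$, so no coarse comparison of sizes, relative distances, or separating moduli can distinguish the two local models; and the ``finer asymptotic shape'' invariant you appeal to is not known to exist. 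Indeed, if such a local obstruction did exist, it would immediately prove Conjecture~\ref{conj:gqh} for all gasket limit sets, since by Corollary~\ref{cor:GFAC} every geometrically finite gasket limit set is quasiconformally a circle packing, all of whose tangencies are M\"obius cusps --- whereas the paper needs the lengthy combinatorial analysis of \S\ref{sec:CT}--\S\ref{sec:QNE} to handle merely the quadratic case. (A smaller correction to Step 1: meeting in exactly one point is a topological, not a quasiconformal, property; the quasiconformal input is that tangential versus angular contact is preserved, and this is what forces $J$ to be a fat gasket.)

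The idea you are missing is global, not local. The paper shows (Lemma~\ref{lem:ep}, Theorem~\ref{thm:dfg}) that in a fat gasket Julia set \emph{all} tangency points are eventually mapped to a \emph{single} parabolic fixed point of index $2$; the key step is that two distinct fixed tangency configurations would force some Fatou component to contain two attracting petals, which is impossible. Hence the induced simplicial dynamics on the contact graph has a unique fixed edge $E_0$ absorbing every edge, and coloring vertices by which endpoint of $E_0$ they eventually reach (after passing to $f^2$ if the endpoints form a $2$-cycle) exhibits the contact graph as bipartite (Theorem~\ref{thm:thmC}). The Apollonian gasket contains three mutually tangent circles, so its contact graph contains a $3$-cycle and is not bipartite; this global parity obstruction, rather than any local analysis at a cusp, is the proof. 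Incidentally, your repelling case is dispatched in one line in the paper: at a repelling periodic point two periodic Fatou components meet at a positive angle, contradicting fatness directly, so no rate comparison is needed; and the exclusion of rotation domains in your Step 2 is never required.
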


\begin{figure}
    \centering
    \begin{subfigure}[c]{0.45\textwidth}
		\centering
		\includegraphics[width=\textwidth]{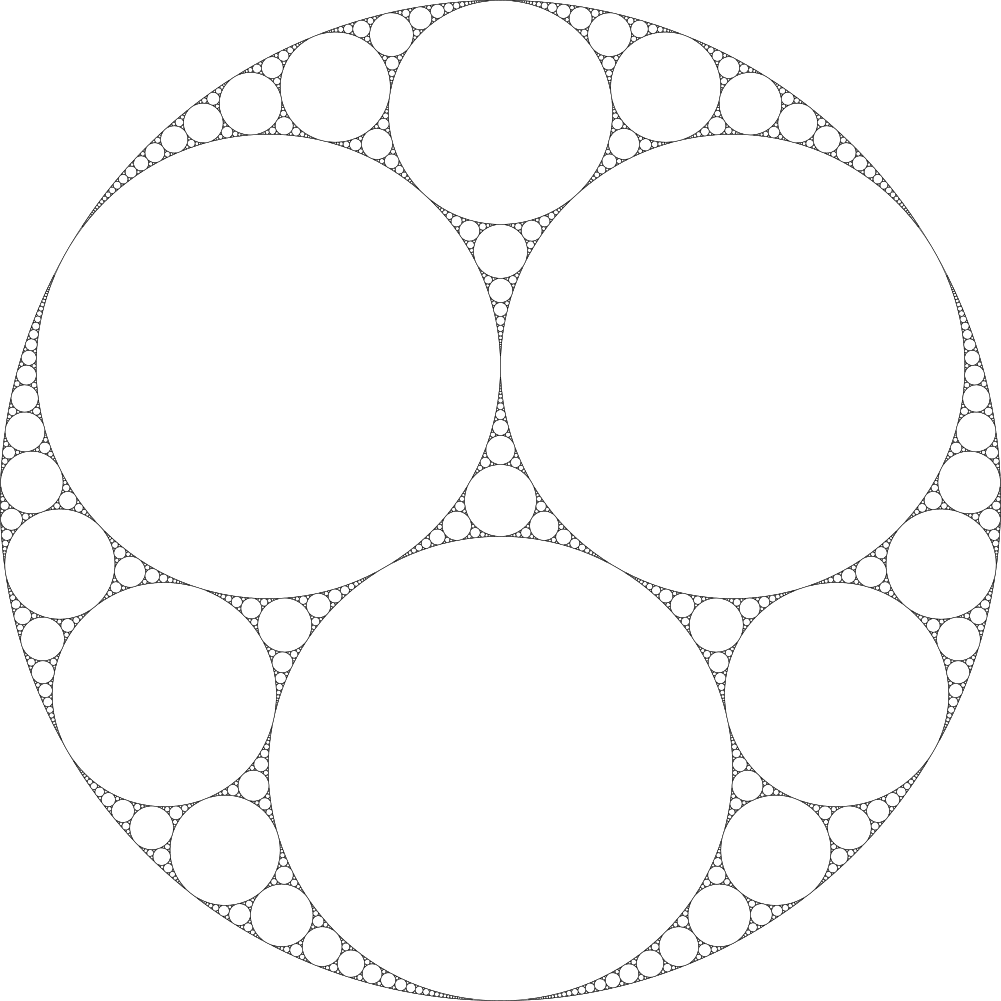}
	\end{subfigure}
	\begin{subfigure}[c]{0.45\textwidth}
		\centering
		\includegraphics[width=\textwidth]{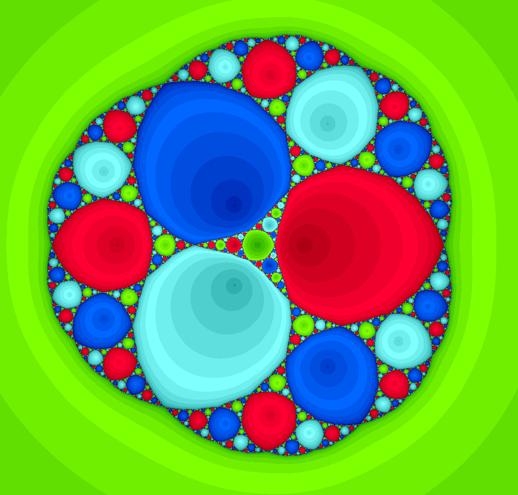}
	\end{subfigure}
    \caption{The Apollonian gasket on the left, and the Julia set that is homeomorphic to an Apollonian gasket on the right. The two sets are not quasiconformally homeomorphic as Fatou components touch at an angle on the right.}
    \label{fig:AG}
\end{figure}

\begin{remark}
We remark that there exist Julia sets that are homeomorphic to the Apollonian gasket (see Figure \ref{fig:AG}). The Apollonian gasket is the limit set of the Apollonian group (\cite{GLMWY05}). In fact, for the limit set of any geometrically finite Kleinian group that is homeomorphic to the Apollonian gasket, it is quasiconformally homeomorphic to the Apollonian gasket (see \cite{McM90} or \S \ref{sec:KG}). Therefore, Theorem \ref{thm:thmA} can be restated as no Julia set is quasiconformally homeomorphic to an Apollonian gasket limit set of a geometrically finite Kleinian group.
\end{remark}


More generally, we define a {\em gasket} $K$ as a closed subset of $\hat\C$ so that
\begin{enumerate}
    \item each complementary component is a Jordan domain;
    \item any two complementary components touch at most at 1 point;
    \item no three complementary components have a common boundary point;
    \item the {\em contact graph} (or the {\em nerve}), obtained by assigning a vertex to complementary component and an edge if two touch, is connected.
\end{enumerate}

Apollonian gasket provides one example of gaskets.
Gaskets also arise naturally as Julia set of rational maps and limit sets of Kleinian groups (see \S \ref{sec:KG}) and appear frequently in the literature.
For example, any {\em polyhedral circle packing} defined in \cite{KN19} gives a gasket limit set. 
Many rational maps have gasket Julia set that are homeomorphic to gasket limit sets (see \cite{LLM22, LLM22b}).
Our second result shows that


\begin{theorem}\label{thm:thmB}
No Julia set of a quadratic rational map is quasiconformally homeomorphic to a gasket limit set of a geometrically finite Kleinian group.
\end{theorem}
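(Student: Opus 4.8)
The plan is to isolate a single quasiconformal invariant attached to the touching points of a gasket and to show that it takes incompatible values on the two sides. Suppose toward a contradiction that $\phi\colon \hat{\mathbb{C}} \to \hat{\mathbb{C}}$ is a $K$-quasiconformal homeomorphism with $\phi(J) = \Lambda$, where $J$ is the Julia set of a quadratic rational map $f$ and $\Lambda$ is the gasket limit set of a geometrically finite Kleinian group $\Gamma$. Being a homeomorphism of the sphere carrying $J$ onto $\Lambda$, the map $\phi$ sends complementary components to complementary components, and by axioms (2)--(3) it carries each touching point $p$ (where exactly two components $U, V$ meet) to a touching point $\phi(p)$ of $\phi(U), \phi(V)$. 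The invariant I would use is the following: at a touching point $p$ of $U, V$, let $m(r)$ denote the conformal modulus of the family of curves in $B(p,r)$ joining $\partial U$ to $\partial V$. I call the contact \emph{tangential} if $m(r) \to \infty$ as $r \to 0$ and \emph{angular} if $m(r)$ stays bounded. Since $\phi$ distorts moduli by at most $K$ and is quasisymmetric, so that the scales on the two sides correspond up to bounded factors, the dichotomy tangential/angular is preserved by $\phi$. A direct computation justifies the names: two internally tangent round disks have cusp-shaped gaps of width $\sim r^2$ at scale $r$, giving $m(r) \sim 1/r \to \infty$, while two disks meeting at a positive angle have genuine sectorial gaps and $m(r)$ bounded.

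On the Kleinian side, I would show that \emph{every} touching point of $\Lambda$ is tangential. A tangency of two limit-set components produces a rank-one cusp, so every touching point is the fixed point of a parabolic element of $\Gamma$, or lies in the $\Gamma$-orbit of such a point. At a parabolic fixed point the limit set is asymptotically self-similar under a parabolic Möbius transformation, whose invariant regions are horoballs; the two components are therefore asymptotic to tangent horoballs, which is precisely the tangential case $m(r) \to \infty$. Because the elements of $\Gamma$ are Möbius, hence conformal and modulus-preserving, the property $m(r) \to \infty$ propagates along the orbit to all touching points of $\Lambda$.

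On the rational side, I would instead produce a touching point of $J$ that is angular. By Sullivan's no-wandering-domains theorem the Fatou components are preperiodic; selecting two \emph{periodic} components $U, V$ that touch, and using that two components of a gasket meet in at most one point (axiom (2)), the common touching point $p$ is fixed by the first-return map $f^m$ and hence is a periodic point of $J$. Since a degree-$2$ rational map has only finitely many non-repelling cycles while $J$ has infinitely many touching points, I may take $p$ repelling. Linearizing $f^m$ near $p$ as $w \mapsto \rho w$ with $|\rho| > 1$, the set $J$ is asymptotically invariant under a \emph{linear} scaling, whose invariant regions are cones (or logarithmic-spiral sectors when $\rho \notin \mathbb{R}_{>0}$) rather than cusps. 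Thus $U, V$ are asymptotic to sectors, $m(r)$ is bounded, and the contact is angular. Together with the Kleinian side this contradicts the quasiconformal invariance of the dichotomy, completing the argument.

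The step I expect to be the main obstacle is the rational side: one must guarantee the existence of a touching point that is genuinely repelling rather than parabolic, and confirm that asymptotic self-similarity under $w \mapsto \rho w$ really forces $m(r)$ to stay bounded even when $\partial U, \partial V$ are only quasicircles, so that the cone/sector description survives up to quasisymmetry. Showing that two periodic Fatou components must touch, and that their contact is not of parabolic type, is exactly where the fine structure of quadratic rational maps with gasket Julia sets must be invoked, and is where the hypothesis of degree $2$ genuinely enters. A secondary technical point is the clean propagation of $m(r) \to \infty$ to \emph{all} touching points of $\Lambda$, for which I would rely on the asymptotic conformal self-similarity of geometrically finite limit sets at every point.
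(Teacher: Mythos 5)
There is a genuine gap, and it is exactly at the point you flag as ``the main obstacle'': the existence of an angular (repelling) touching point on the rational-map side. This cannot be fixed, because it is false. The paper constructs quadratic rational maps whose Julia sets are \emph{fat gaskets}: every touching point of two Fatou components is a tangency, with no cusps on the boundaries (Theorem \ref{thm:thmD}, Case (2): any root of a captured-type hyperbolic component with an attracting cycle of period $2$ has such a Julia set). For these maps your dichotomy gives no contradiction at all. The flaw in your argument is the step ``a degree-$2$ rational map has only finitely many non-repelling cycles while $J$ has infinitely many touching points, so I may take $p$ repelling.'' The infinitely many touching points of a gasket Julia set are not periodic: by Lemma \ref{lem:ep} of the paper, in the fat gasket case \emph{every} touching point is strictly preperiodic except for a single parabolic fixed point of parabolic index $2$ (two petals), and all touching points eventually map onto it. The preperiodic touching points inherit the tangential local geometry of that parabolic point by univalent pullback (Lemma \ref{lem:njc} rules out critical points on Fatou boundaries), so every touching point is tangential, matching the Kleinian side exactly. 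Your invariant $m(r)$ is therefore constant across the two sides for precisely the maps that matter.

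What your argument does correctly prove is the \emph{reduction}: since a gasket limit set of a geometrically finite group is quasiconformally homeomorphic to a circle packing (Corollary \ref{cor:GFAC}, via acylindricity and McMullen's theorem), and tangential versus angular contact is a quasiconformal invariant, any Julia set quasiconformally equivalent to such a limit set must be a fat gasket. This is where the paper starts, not where it ends. The actual proof then needs two further ingredients that your proposal does not touch: (i) the classification of quadratic fat gaskets (Theorem \ref{thm:thmD}), which shows the Fatou graph is either a tree --- already incompatible with geometrically finite gasket limit sets by Theorem \ref{thm:GFAC} --- or comes from a captured-type map in $\Per_2(0)$; and (ii) a combinatorial rigidity argument (\S \ref{sec:CT}--\ref{sec:QNE}) showing that the Fatou graph of such a map cannot admit the infinite cyclic group of symmetries fixing an edge that the contact graph of a Kleinian gasket must carry (Lemma \ref{lem:pfp}). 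That obstruction lives in the combinatorics of shortest anchored simple closed curves and their siblings, not in the local moduli of touching points; no local quasiconformal invariant at a single tangency can distinguish a fat gasket Julia set from a circle packing.
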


For higher degree rational maps, the combinatorics becomes more complicated.
However, we believe it is possible to generalize our method to higher degrees and prove Conjecture \ref{conj:gqh} in the gasket case.

\subsection{Historical Background}
Our study of quasiconformal non-equivalence between Julia sets and limit sets is motivated by the study of rigidity of quasisymmetries of a Sierpi\'nski carpet in \cite{BKM09, BM13, Mer14, MLM16}.
Using the rigidity, it is proved in \cite[Corollary 1.2]{MLM16} that a Sierpi\'nski carpet Julia set of a \pcf rational map has a finite quasisymmetry group.
Since the quasisymmetry group is a quasiconformal invariant, and the limit set has a large conformal symmetry group, we have immediately that (cf. \cite[Corollary 1.3]{Mer14})
\begin{theorem}{\cite[Corollary 1.3]{MLM16}}
No Julia set of a \pcf rational map is quasiconformally homeomorphic to a Sierpi\'nski carpet limit set of a Kleinian group.
\end{theorem}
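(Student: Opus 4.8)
The plan is to argue by contradiction, using the \emph{quasisymmetry group} as the decisive quasiconformal invariant. Write $J$ for the Julia set of a \pcf rational map, assumed to be a Sierpi\'nski carpet, and suppose toward a contradiction that there is a quasiconformal homeomorphism $\phi$ of $\hat{\C}$ (or of neighborhoods) carrying $J$ onto a Sierpi\'nski carpet limit set $\Lambda$ of a Kleinian group $\Gamma$. The two facts I would combine are: (i) by the rigidity result cited as \cite[Corollary 1.2]{MLM16}, the quasisymmetry group $\mathrm{QS}(J)$ of such a carpet Julia set is finite; and (ii) the group $\Gamma$ furnishes an infinite group of conformal, hence quasisymmetric, self-maps of $\Lambda$, so that $\mathrm{QS}(\Lambda)$ is infinite. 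A finite group cannot be isomorphic to an infinite one, so it remains only to promote the quasiconformal equivalence of $J$ and $\Lambda$ to an isomorphism $\mathrm{QS}(J)\cong\mathrm{QS}(\Lambda)$.

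For that isomorphism, recall that a quasiconformal homeomorphism between neighborhoods of $J$ and $\Lambda$ is quasisymmetric on compact subsets with respect to the spherical metric; in particular its restriction $\phi\colon J\to\Lambda$ is a quasisymmetry between the two carpets equipped with their induced metrics. Since compositions and inverses of quasisymmetries are again quasisymmetries, conjugation $g\mapsto \phi\circ g\circ\phi^{-1}$ maps $\mathrm{QS}(J)$ isomorphically onto $\mathrm{QS}(\Lambda)$. Consequently $\mathrm{QS}(J)$ is finite if and only if $\mathrm{QS}(\Lambda)$ is, which is exactly the tension I want to exploit.

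To establish (ii), note that $\Lambda$ is $\Gamma$-invariant and each $\gamma\in\Gamma$ acts on $\Lambda$ by the restriction of a M\"obius transformation, hence conformally and in particular quasisymmetrically. Because $\Lambda$ is a Sierpi\'nski carpet---so it is neither a single point, a pair of points, nor a round circle---the group $\Gamma$ is non-elementary. A M\"obius transformation fixing three points is the identity, so the action of $\Gamma$ on the (uncountable) set $\Lambda$ is faithful, and its image in the homeomorphism group of $\Lambda$ is infinite. Hence $\mathrm{QS}(\Lambda)$ contains an infinite subgroup, which contradicts the finiteness of $\mathrm{QS}(J)$ transported through the conjugation above.

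The essential content, and the step I expect to be genuinely hard, is the finiteness in (i): it rests on the carpet rigidity theory of Bonk--Kleiner--Merenkov (quasisymmetries of round carpets are M\"obius), combined with the self-similar structure of \pcf Julia sets, and is precisely what is packaged in \cite[Corollary 1.2]{MLM16}. Granting that input, every remaining step is soft---the invariance of the quasisymmetry group under quasisymmetric conjugation and the infinitude of the conformal symmetries of a non-elementary Kleinian limit set are both routine---so the theorem follows, as indicated in the text, \emph{immediately}.
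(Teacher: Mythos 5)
Your proposal is correct and follows essentially the same route as the paper, which likewise treats the statement as an immediate consequence of three facts: finiteness of the quasisymmetry group of a carpet \pcf Julia set (\cite[Corollary 1.2]{MLM16}), invariance of the quasisymmetry group under quasiconformal homeomorphism, and the infinitude of the conformal (hence quasisymmetric) symmetries of the limit set coming from the Kleinian group action. The only detail worth making explicit is that $J$ is automatically a Sierpi\'nski carpet (being a carpet is a topological property transported by the assumed homeomorphism), so \cite[Corollary 1.2]{MLM16} indeed applies.
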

This result is generalized in \cite{QYZ19} to {\em semi-hyperbolic} rational maps.

On the other hand, using David surgery, it is proved in \cite{LLMM19} that there exists some rational map $f$ so that
\begin{itemize}
    \item the Julia set $\mathcal{J}$ is homeomorphic to the Apollonian gasket; and
    \item the quasisymmetry group $\mathcal{QS}(\mathcal{J})$ equals the homeomorphism group $\Homeo(\mathcal{J})$.
\end{itemize}
The homeomorphism $\Phi: \mathcal{J} \longrightarrow \Lambda$ between the Julia set $\mathcal{J}$ and the Apollonian gasket $\Lambda$ is {\bf not} quasiconformal.
Yet, it induces an isomorphism
$$
\Phi_*: \mathcal{QS}(\mathcal{J}) \longrightarrow \mathcal{QS}(\Lambda) = \Conf(\Lambda).
$$
Thus, in this case, the quasisymmetry groups do not distinguish the Julia set $\mathcal{J}$ and the Apollonian gasket $\Lambda$.
The same phenomenon can also occur for other general gaskets (see \cite{LLMM19} for more details).

Although Sierpi\'nski carpet and gasket both arise as limit set of an acylindrical hyperbolic 3-manifold \cite{McM90}, 
the two cases are quite different.
We use the combinatorics of the gaskets to study the quasiconformal non-equivalence problem.

\subsection{Strategy and techniques}
In \S \ref{sec:KG}, we recall a characterization of finitely generated Kleinian groups with gasket limit sets (see Theorem \ref{thm:GFAC}).
If the Kleinian group is geometrically finite, then the corresponding hyperbolic 3-manifold $M = \Hyp^3/\Gamma$ is {\em acylindrical} in the sense of Thurston.
It follows that $\Gamma$ is quasiconformally conjugate to a Kleinian group with totally geodesic convex hull boundary.
Thus $\Lambda$ is quasiconformally homeomorphic to an infinite circle packing (see Corollary~\ref{cor:GFAC}).

Note that the degree of tangency at the intersection point of two complementary components of $\Lambda$ is a quasiconformal invariant.
Thus, a necessary condition for a gasket Julia set $\mathcal{J}$ to be quasiconformally homeomorphic to $\Lambda$ is that 
\begin{itemize}
    \item the boundary of each Fatou component contains no cusps;
    \item two Fatou components are tangent to each other if they touch.
\end{itemize}
We shall call such a Julia set $\mathcal{J}$ a {\em fat gasket} (see Figure \ref{fig:FG}).

\begin{figure}
    \centering
    \includegraphics[width=0.8\textwidth]{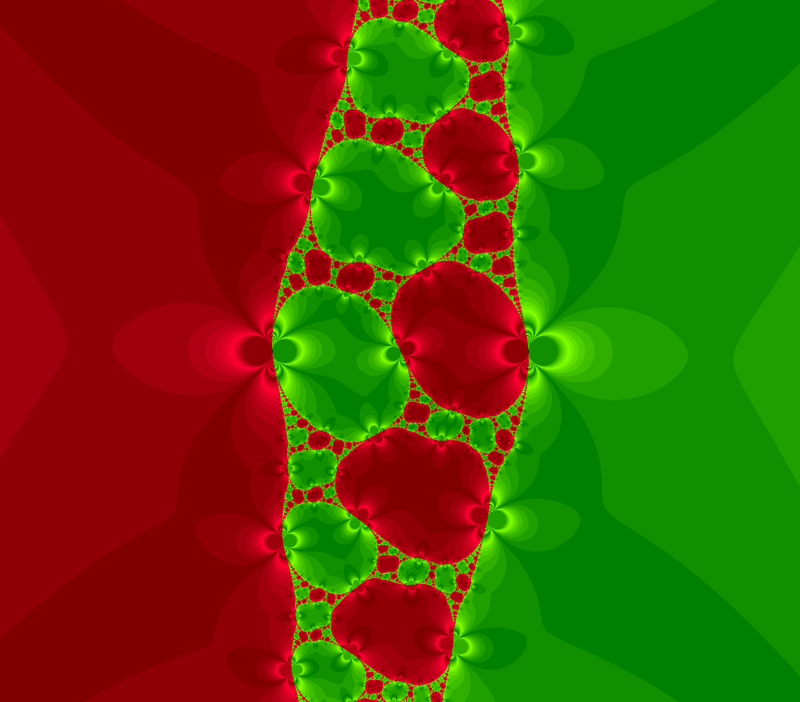}
    \caption{An example of a fat gasket Julia set. The Fatou graph is bipartite as one can see by the coloring of Fatou set.}
    \label{fig:FG}
\end{figure}

Let $\mathcal{G}$ be the contact graph of a fat gasket Julia set $\mathcal{J}$ of $f$, which we shall also call the {\em Fatou graph} of $f$.
We show that the local quasiconformal structure puts a global constraint on $\mathcal{G}$.
In particular, we prove that
\begin{theorem}\label{thm:thmC}
The Fatou graph $\mathcal{G}$ of a fat gasket Julia set is bipartite.
\end{theorem}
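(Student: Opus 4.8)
The plan is to prove that $\mathcal{G}$ has no cycle of odd length. The contact graph is planar: draw it on $\hat\C$ by placing a vertex inside each Fatou component and routing each edge through the corresponding tangency point. Given a cycle $U_0,U_1,\dots,U_{n-1}$, concatenating for each $i$ the boundary arc of $\partial U_i$ running between its two tangency points $p_{i-1}$ (with $U_{i-1}$) and $p_i$ (with $U_{i+1}$) produces a closed curve $\gamma$. Because $\mathcal{J}$ is a \emph{fat} gasket, at each $p_i$ the two arcs coming from $U_i$ and $U_{i+1}$ are mutually tangent, with the two components lying on opposite sides of their common tangent line, so $\gamma$ has a genuine cusp (interior angle $0$) at each of its $n$ vertices. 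One should note at the outset that a purely metric/Gauss–Bonnet bookkeeping of the turning of $\gamma$ cannot by itself force $n$ to be even: the Apollonian gasket is a perfectly good gasket whose contact graph has triangles, so the even-ness must come from the \emph{holomorphic dynamics}, not from the gasket axioms alone.

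The dynamical input enters through two standard facts. First, $f$ carries Fatou components to Fatou components and preserves tangential contact, so it maps tangency points to tangency points and cycles of $\mathcal{G}$ to closed walks. Second, by Sullivan's no-wandering-domains theorem every Fatou component is pre-periodic. The first step of the argument is therefore to push a hypothetical shortest odd cycle forward under $f$ until the components involved are all periodic, reducing the problem to a \emph{periodic} odd cycle together with its first-return map $g=f^m$. One then separates two cases according to whether $g$ fixes each $U_i$ or permutes them cyclically.

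The fixed case is the one where the mechanism is visible, and I would treat it first. If $g(U_i)=U_i$ and $g(U_{i+1})=U_{i+1}$, then $g$ fixes their unique common boundary point $p_i$, so $p_i$ is a periodic point lying on the boundary of a periodic Fatou component. I claim $p_i$ cannot be repelling. Indeed, in a linearizing coordinate $w=\varphi(z)$ with $g(w)\sim\lambda w$, the two arcs $\partial U_i,\partial U_{i+1}$ near $p_i$ are $g$-invariant curves through $0$ possessing a common tangent line (the fat condition). A $\lambda$-invariant curve with a tangent at $0$ forces $\lambda$ to be real, and a direct computation shows the only $\lambda$-invariant curve tangent to the eigendirection is that line itself; two such arcs then coincide, contradicting that $U_i$ and $U_{i+1}$ touch at a single point, while a non-real $\lambda$ produces logarithmic-spiral invariant curves with no tangent at all. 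Hence each $p_i$ is parabolic. At a parabolic fixed point the attracting and repelling petals alternate in an \emph{even} number $2q$ of sectors, and the tangency of $U_i$ with $U_{i+1}$ is realized along attracting directions; this even petal structure is exactly what supplies a consistent local $\Z/2$-alternation. Assembling these local alternations into a co-orientation (a transverse normal chosen at each tangency) whose holonomy around the cycle must be trivial then forces $n$ to be even.

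The main obstacle, and what I expect to occupy the bulk of the work, is to globalize this picture. Three points need care. First, the reduction to a periodic cycle is not automatic: forward images of a simple cycle need not remain simple, and one must argue that the parity of the cycle is an invariant of the process rather than merely that some periodic closed walk appears. Second, the case where $g$ permutes $U_0,\dots,U_{n-1}$ cyclically does not fall to the invariant-curve argument above, and here one must extract parity from the local degree and rotation number of $g$ about the region enclosed by the cycle. Third, cycles may pass through pre-periodic tangency points whose forward orbit hits a critical point, where $f$ is not a local homeomorphism and the order of contact could in principle change; controlling the order of tangency along the whole orbit, and upgrading the local parabolic parity into a single $\Z/2$-valued invariant defined on \emph{all} edges of $\mathcal{G}$ simultaneously, is the crux. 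I expect the cleanest route to this global invariant to be through the co-orientation/cocycle formulation, with the parabolic petal count providing the vanishing of its class.
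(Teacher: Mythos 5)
Your local analysis at a fixed tangency point (repelling multipliers are impossible, so the point is parabolic with two petals) matches the first step of the paper's Lemma~\ref{lem:ep}, but everything you build on top of it has a genuine gap, and in two places the plan runs contrary to what actually happens. First, the reduction to a periodic odd cycle fails. The induced map $f_*$ on $\mathcal{G}$ is indeed simplicial (this uses fatness, as in Lemma~\ref{lem:njc}: a critical point on a Fatou boundary would create a cusp), but the image of a cycle is only a closed walk, and these walks genuinely collapse: the paper proves (Theorem~\ref{thm:dfg}) that \emph{every} edge of $\mathcal{G}$ is eventually mapped to one fixed edge $E_0$, so forward images of any cycle degenerate to back-and-forth walks on $E_0$, and no cycle all of whose vertices are periodic exists at all (a periodic edge that eventually hits the fixed edge $E_0$ must equal $E_0$). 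Your two cases --- first-return map fixing each $U_i$, or permuting them cyclically --- therefore describe configurations that never occur, so no parity invariant can be extracted by passing to them. Moreover, even granting a cycle of fixed components, the correct conclusion is not a $\Z/2$-alternation but an outright contradiction: $U_i$ would contain attracting petals of the two \emph{distinct} parabolic fixed points $p_{i-1}$ and $p_i$, which is impossible since all orbits in a fixed parabolic basin converge to a single boundary point. This uniqueness statement --- there is exactly one periodic tangency point, hence exactly one fixed edge --- is precisely the idea your proposal is missing; the paper proves it by a shortest-path argument in $\mathcal{G}$ combined with the petal count. Second, the global step you defer (the co-orientation/cocycle ``whose holonomy must be trivial'') is exactly what needs proof: you give no definition of the co-orientation at the generic, non-periodic tangency points and no mechanism forcing the holonomy to vanish; as your own Apollonian remark shows, no local structure can do this, and your dynamical input is consumed entirely by the (vacuous) periodic case.

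For contrast, the paper's route is global and short once Theorem~\ref{thm:dfg} is available: since every edge is eventually mapped to the fixed edge $E_0=[x,y]$, pass to $f_*^2$ so that $x$ and $y$ are fixed vertices, and partition the vertex set into those vertices eventually mapped to $x$ and those eventually mapped to $y$. Simpliciality forces the two endpoints of any edge to land on opposite endpoints of $E_0$, so this partition is a bipartition. In other words, bipartiteness is a basin dichotomy for the induced dynamics on $\mathcal{G}$, not an assembly of local parities at tangency points; an argument along your lines would need to first establish the collapse onto a unique fixed edge, at which point the cocycle machinery becomes unnecessary.
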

We remark that Theorem \ref{thm:thmC} allows one to show that many gasket Julia sets are quasiconformally different from gasket limit sets. 
For example, Theorem \ref{thm:thmA} follows immediately from it as the contact graph of the Apollonian gasket contains a cycle of length 3, so it is not bipartite.

On the other hand, there are plenty of rational maps with fat gasket Julia set and we use Thurton theory of rational maps to obtain a characterization (see Theorem \ref{thm:nto}).
In particular, for degree 2, we have
\begin{theorem}\label{thm:thmD}
Let $f$ be a quadratic rational map with a fat gasket Julia set. Then either
\begin{enumerate}[label=\normalfont{(\arabic*)}]
    \item the Fatou graph is a tree; or
    \item the Fatou graph is not a tree and $f$ is a root of a captured type hyperbolic component with an attracting cycle of period 2.
\end{enumerate}
Moreover, 
\begin{itemize}
    \item in Case (1), if $f$ is geometrically finite, then $f$ is a mating of the fat basilica with a Misiurewicz polynomial;
    \item in Case (2), any root of a captured type hyperbolic component with an attracting cycle of period 2 has fat gasket Julia set.
\end{itemize}
\end{theorem}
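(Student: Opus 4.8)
The plan is to feed the bipartiteness from \Cref{thm:thmC} into the Thurston-theoretic classification of fat gasket quadratic maps furnished by \Cref{thm:nto}, and then read off the contact graph $\mathcal{G}$ directly from the resulting dynamical data. I would first record that, for any fat gasket quadratic $f$, the Fatou set consists of infinitely many Jordan components forming a single grand orbit, and that the induced self-map on the vertices of $\mathcal{G}$ is at most $2$-to-$1$, branching only at the (at most two) components containing a critical point. A Riemann--Hurwitz count then eliminates the degenerate configurations: a period-$1$ basin contains a critical point, is completely invariant, and so leaves a single Fatou component, while a period-$2$ cycle carrying a critical point in each of its two members leaves no further preimages and hence only two components; neither yields a gasket. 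Combined with the no-cusp condition, this forces a single attracting or parabolic cycle of period $2$ whose immediate basin contains exactly one critical point $c_1$, in agreement with \Cref{thm:nto}. The entire dichotomy is therefore governed by the location of the second critical point $c_2$.

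For the dichotomy I would split on whether $c_2$ lies in the Julia set or in a Fatou component. If $c_2\in\Julia$, the fat gasket condition forbids a cusp and hence forces $c_2$ to be strictly preperiodic; the only Fatou components are then the basin components of the period-$2$ cycle, whose nerve is exactly the contact tree of the basilica, giving Case~(1). If instead $c_2$ lies in a Fatou component $W$, then, the bitransitive case being already excluded, $W$ is strictly preperiodic and $f|_W$ is $2$-to-$1$, so $f(W)$ has $W$ as its unique preimage. I would show that this folding identifies two boundary arcs of $f(W)$ lying on distinct branches of the preimage tree, and that at the parabolic parameter the attracting petals along the period-$2$ cycle make the two resulting chains of components close up, producing a cycle in $\mathcal{G}$, necessarily of even length by \Cref{thm:thmC}. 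This is Case~(2), and being at the root is precisely what renders the closing contact tangential rather than transverse or cuspidal.

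For the moreover statements I would argue in opposite directions. In Case~(1), geometric finiteness together with a period-$2$ cycle and a strictly preperiodic $c_2$ places $f$ within the Rees--Shishikura--Tan theory of matings: the period-$2$ factor must be the basilica $z^2-1$ (the unique quadratic polynomial with a superattracting cycle of period $2$), the preperiodic factor is a Misiurewicz polynomial, and the absence of a Thurston obstruction, guaranteed by \Cref{thm:nto}, identifies the unobstructed mating with $f$. In Case~(2), I would run this in reverse: given any root of a capture-type period-$2$ hyperbolic component, a local parabolic normal form at the period-$2$ parabolic cycle shows that every adjacency of Fatou components is a tangency with no cusp, so the Julia set is a fat gasket.

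The main obstacle is the parabolic analysis underlying both the non-tree direction of the dichotomy and its converse in Case~(2). One must produce a local normal form at the parabolic period-$2$ cycle, count its petals and match them to the incident Fatou components, and then propagate the resulting tangential, cusp-free contact to the entire grand orbit by a pullback argument, all while verifying that the capture genuinely closes a cycle rather than merely folding the tree. Checking that this cycle is compatible with the bipartite $2$-coloring from \Cref{thm:thmC}, and so has even length, is the final consistency check.
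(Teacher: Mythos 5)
Your skeleton---splitting on whether the free critical point $c_2$ lies in the Julia set or in a Fatou component---is the same as the paper's, and your Riemann--Hurwitz eliminations and the tree conclusion in Case (1) are sound in outline (though the tree property should be derived from Lemma \ref{lem:njc} plus a pullback count, not from the assertion that the nerve ``is exactly the contact tree of the basilica''; also, your claim that the fat gasket condition forces $c_2\in\Julia$ to be strictly preperiodic is unjustified, and is precisely why the paper adds the geometric finiteness hypothesis in the moreover part). The genuine gap is in Case (2): you show that $\mathcal{G}$ acquires a cycle, but statement (2) asserts that $f$ \emph{is a root of a captured type hyperbolic component}, and you never prove this. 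Your sentence ``being at the root is precisely what renders the closing contact tangential'' argues the converse implication. The paper's proof goes through Cui--Tan \cite{CT18}: since $c_2$ lies in the Fatou set, $f$ is geometrically finite, so there is a \pcf (hence hyperbolic) map $g$ with topologically conjugate Julia dynamics; the touching point of the two period-2 components is a parabolic fixed point by Lemma \ref{lem:ep}, which places $f$ on $\partial\PH(g)$ as a root, and the capture property follows because $c_2$ is eventually mapped into $U_a\cup U_b$ by Theorem \ref{thm:dfg}. Without this (or an equivalent) argument the dichotomy is incomplete.

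Both ``moreover'' bullets suffer from assuming the conclusion. In Case (1), invoking Rees--Shishikura--Tan theory and saying the period-2 factor ``must be the basilica'' presupposes that $f$ is a mating of two quadratic polynomials; the actual mathematical content is the construction of an equator, which the paper carries out explicitly (thicken the arc $I$ of internal rays through the critical 2-cycle of the \pcf model $g$, and verify that $\partial U$ pulls back to an isotopic curve rel the postcritical set, mapping with degree two). In Case (2), your parabolic normal form only yields \emph{fatness} of contacts that are already known to exist; it does not show that a root's Julia set is a gasket at all, i.e.\ that Fatou components actually touch and that the nerve is connected. The paper establishes this for the \pcf center via the observation that a quadratic rational map has at most one 2-cycle (forcing the fixed internal rays of $U_a$ and $U_b$ to land at a common fixed point), then builds the invariant graph $\bigcup_n g^{-n}(E_0)$, verifies the no-Levy-cycle hypothesis of Theorem \ref{thm:nto}, and produces the fat gasket root by the pinching deformation in that theorem's proof. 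You also take for granted that a root carries a parabolic point rather than, say, a Siegel or Cremer cycle; that too requires an argument.
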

\begin{remark}
We recall that a hyperbolic component $\mathcal{H}$ of a quadratic rational map is called captured type if only one critical point is in the immediate basin of a periodic point.
A rational map $f$ is called a {\em root} of $\mathcal{H}$ if $f \in \partial \mathcal{H}$ and the dynamics on the Julia set $\mathcal{J}(f)$ is topologically conjugate to that of $g \in \mathcal{H}$.

We also recall that a quadratic polynomial $f(z) = z^2+c$ is called a Misiurewicz polynomial if the critical point $0$ is strictly pre-periodic.
The Julia set of a Misiurewicz polynomial is a dendrite. 
We remark that not every Misiurewicz polynomial $f$ is matable with the fat basilica.
In fact, $f$ is matable with the fat basilica if and only if $f$ is not in the 1/2-limb of the Mandelbrot set \cite{Tan92}.
We also remark even if $f$ is matable with the fat basilica, the Fatou graph of their mating may not be a tree.
In this case, by Theorem \ref{thm:thmD}, the mating is not a fat gasket Julia set.
\end{remark}

To prove Theorem \ref{thm:thmB}, we first note that the contact graph of a gasket limit set of a geometrically finite Kleinian group is not a tree (see Theorem \ref{thm:GFAC}).
Thus, we can assume the Fatou graph $\mathcal{G}$ is not a tree.
We then show the homeomorphism group $\Homeo(\mathcal{G})$ is very restrictive (see \S \ref{sec:QNE}).
This allows us to distinguish $\mathcal{G}$ from the contact graph of a gasket limit set.

\subsection{Notes and discussions}
The Apollonian gasket or the Apollonian circle packing and its arithmetic, geometric, and dynamical properties have been extensively studied in the literature (for a non-exhaustive list, see e.g.\ \cite{GLMWY03, GLMWY05, BF11, KO11, OS12, BK14, Zha22}).

More recently, there have been many new and exciting developments in the study of other gasket or circle packings coming from the limit set of Kleinian groups \cite{KN19, KK21, BKK22, LLM22}.

It is shown in \cite{LLM22} that many of these gaskets are homeomorphic to Julia sets of rational maps.
In the same paper, many other pairs of homeomorphic Julia set and limit set are constructed. 
It would be interesting to study the quasiconformal non-equivalence problem for such Julia sets and limit sets.

\subsection{Structure of the paper}
We study Kleinian groups with gasket limit sets in \S \ref{sec:KG}.
The induced dynamics on the Fatou graph of a rational map with a fat gasket Julia set is studied in \S \ref{sec:ifg}, where Theorem \ref{thm:thmC} is proved.
The realization of rational maps with fat gasket Julia sets are studied in \S \ref{sec:rfg}. In particular, Theorem \ref{thm:thmD} is proved there.
Finally, the combinatorics of Fatou graphs for quadratic fat gasket Julia sets are studied in \S \ref{sec:CT}, and Theorem \ref{thm:thmB} is proved in \S \ref{sec:QNE}.

\subsection*{Acknowledgement}
The authors thank Mikhail Lyubich and Sabyachi Mukherjee for bringing up the question and useful discussions.
The authors also acknowledge that the outline of the proof of Theorem \ref{thm:thmA} is suggested and communicated to us by Curt McMullen, and we thank him for many valuable suggestions. Y.Z. acknowledges the support of Max Planck Institute for Mathematics, where part of the research was done.

\section{Kleinian groups with gasket limit set}\label{sec:KG}
In this section, we recall some results on geometrically finite Kleinian groups, especially those with gasket limit sets. We refer to \cite[Appx.~B]{LZ23} for details.
Throughout the section, let $\Gamma\subset\PSL(2,\mathbb{C})$ be a Kleinian group, $\Lambda\subset\hat{\mathbb{C}}$ its limit set, and $\Omega=\hat{\mathbb{C}}-\Lambda$ its domain of discontinuity.
Up to a finite-index subgroup, we may assume $\Gamma$ is torsion-free.
We also denote by $M=\Gamma\backslash\mathbb{H}^3$ the corresponding hyperbolic three-manifold.

\subsection{Geometric finiteness and acylindricity}\label{subsec:acy}
Recall that the \emph{convex core} of $M$ is given by
$$\core(M):=\Gamma\backslash\chull(\Lambda),$$
where $\chull(\Lambda)$ denotes the convex hull of $\Lambda$ in $\mathbb{H}^3$.
We say $M$ (and the corresponding Kleinian group $\Gamma$) is \emph{geometrically finite} if the unit neighborhood of $\core(M)$ has finite volume.

Let $(N,P)$ be a \emph{pared manifold}, where $N$ is a compact oriented 3-manifold with boundary and $P\subset\partial N$ is a submanifold consisting of incompressible tori and annuli.
See \cite{hyperbolization1} for a precise definition in arbitrary dimension.

Set $\partial_0N=\partial N-P$. We say $(N,P)$ is \emph{acylindrical} if $\partial_0N$ is incompressible, and every cylinder
$$f:(S^1\times[0,1],S^1\times\{0,1\})\to(N,\partial_0N)$$
whose boundary components $f(S^1\times\{0\})$ and $f(S^1\times\{1\})$ are essential curves of $\partial_0N$ can be homotoped rel boundary into $\partial N$.

For a geometrically finite $M$, let $\core_\epsilon(M)$ be the convex core of $M$ minus $\epsilon$-thin cuspidal neighborhoods for all cusps. Here $\epsilon$ is chosen small enough, say smaller than the Margulis constant in dimension 3. Let $P\subset\partial\core_\epsilon(M)$ be the union of boundaries of all cuspidal neighborhoods. Then $(\core_\epsilon(M),P)$ is a pared manifold, and we say $M$ (and the corresponding Kleinian group $\Gamma$) is acylindrical if $(\core_\epsilon(M),P)$ is.

One can recognize acylindricity from the limit set in the geometrically finite case. The following characterization is well-known; see for example \cite[Prop.~B.2]{LZ23}.
\begin{prop}[Characterization of geom.\ finite acy.\ Kleinian groups]\label{prop:acy_geom_fin}
Suppose $\Gamma$ is nonelementary and geometrically finite of infinite volume. Then $\Gamma$ is acylindrical if and only if any connected component of the domain of discontinuity $\Omega$ is a Jordan domain, and the closures of any pair of connected components share at most one point. Moreover, any common point of the closures of two connected components is a parabolic fixed point, and any rank-1 parabolic fixed point arises this way.
\end{prop}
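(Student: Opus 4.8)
The plan is to translate both sides of the equivalence into statements about the components of $\Omega$ and their stabilizers, using the standard dictionary between the conformal boundary and the pared manifold. By Ahlfors finiteness together with geometric finiteness, $\Omega/\Gamma$ is a finite union of finite-type Riemann surfaces, and it is naturally identified with $\partial_0 N = \partial\core_\epsilon(M) - P$. Each component $\Omega_i$ of $\Omega$ covers one such surface $S_i = \Omega_i/\Gamma_i$, where $\Gamma_i = \mathrm{Stab}_\Gamma(\Omega_i)$ is itself geometrically finite. Under this identification the two clauses of acylindricity match the two clauses of the proposition: incompressibility of $\partial_0 N$ with the Jordan condition, and the cylinder condition with the tangency condition. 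So I would prove the two correspondences separately.

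First I would treat incompressibility. The surface $S_i$ is incompressible in $N$ exactly when $\pi_1(S_i)\hookrightarrow\Gamma$ is injective, which is classical to see holds iff $\Omega_i$ is simply connected, so that $\Omega_i$ serves as the universal cover of $S_i$ and $\Gamma_i$ acts as a surface group; a non--simply-connected $\Omega_i$ produces a compressing disk. Thus $\partial_0 N$ is incompressible iff every component of $\Omega$ is simply connected. To upgrade simple connectivity to \emph{Jordan domain}, I would invoke local connectivity of the limit set of a geometrically finite group, which forces $\partial\Omega_i$ to be a Jordan curve (a puncture of $S_i$ does not create a cut point of $\partial\Omega_i$, and tangencies occur between distinct components). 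The converse is immediate, so the first clauses are equivalent, and the proposition reduces to matching the cylinder condition with the tangency condition.

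The heart of the argument is the analysis of tangencies. I would first show that any $p\in\bar\Omega_i\cap\bar\Omega_j$ shared by the closures of two components is a bounded parabolic fixed point: in a geometrically finite group every limit point is conical or bounded parabolic, and a point approached from two distinct components of $\Omega$ lies in a cusp and cannot be conical, so it is parabolic and the two components are tangent there (a doubly cusped parabolic). Its stabilizer is rank one, and its quotient contributes a rank-1 cusp, i.e.\ an annulus of $P$ whose two boundary curves lie on $S_i$ and $S_j$; this simultaneously proves the final sentence of the proposition, that tangency points are exactly the rank-1 parabolic fixed points. A single tangency therefore yields only an annulus inside $P\subset\partial N$, which is permitted. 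For the contrapositive of one direction, if two components share two distinct points $p\ne q$, I would build from the two corresponding parabolic cusps an essential cylinder in $(N,\partial_0 N)$ joining $S_i$ to $S_j$ along a core that is not homotopic into $\partial N$, violating acylindricity; a single component whose closure self-touches at two points is handled the same way.

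The main obstacle is the converse within this step: that if every pair of components shares at most one point, then every essential cylinder can be homotoped into $\partial N$. This demands a classification of essential annuli $A$ with $\partial A\subset\partial_0 N$ by the type of their core element. If the core is parabolic, $A$ is homotopic into a cuspidal neighborhood, hence into $P$; if the core is hyperbolic, its two boundary curves are freely homotopic essential curves on the boundary surfaces, and I would rule this out using incompressibility together with the absence of multiple tangencies, by tracking the identifications of parabolic fixed points forced by the annulus and showing any genuinely non-peripheral cylinder would make two components share at least two boundary points. Organizing this case analysis carefully around the combinatorics of how components can be tangent is where the real work lies.
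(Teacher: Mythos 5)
You should first note that the paper does not actually prove this proposition: it records it as well-known, citing \cite[Prop.~8.4]{LZ23}, and isolates Lemma~\ref{lem:pfp} (essentially Maskit \cite{Mas74}) as the key ingredient --- namely that a common boundary point of two components is a parabolic fixed point, and that essential curves on the two quotient surfaces are never homotopic in $\overline{M}$. Your overall dictionary (components of $\Omega$ versus pieces of $\partial_0N$, shared boundary points versus rank-1 cusps, and the observation that an essential cylinder with hyperbolic core forces both fixed points of the core into the closures of the two components, hence two shared points) is the standard frame and matches that argument in spirit.

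However, there is a genuine gap in how you organize the proof: the clause-by-clause matching fails, and in particular the claim ``$\partial_0N$ incompressible $\iff$ every component of $\Omega$ is a Jordan domain'' is false. Incompressibility is equivalent only to simple connectivity of the components, and the proposed upgrade via local connectivity cannot work: local connectivity of $\Lambda$ gives a continuous Carath\'eodory extension of the Riemann map onto $\partial\Omega_i$, but not injectivity, and the failure of injectivity (a cut point of $\partial\Omega_i$, i.e.\ a self-tangency) is exactly what must be excluded --- and excluding it requires the cylinder condition, not incompressibility. Concretely, let $\Gamma$ be a geometrically finite cusp group on the boundary of the quasi-Fuchsian space of a once-punctured torus $S$, obtained by pinching a non-separating simple closed curve $\gamma$. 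Then $M\cong S\times\mathbb{R}$, so $N=S\times[0,1]$ with $P$ consisting of the puncture annulus and an annular neighborhood of $\gamma\times\{0\}$; every component of $\partial_0N$ is an essential subsurface of $S$, hence $\pi_1$-injective, so $\partial_0N$ is incompressible. Yet the invariant component $\Omega_0$ (simply connected, covering $S\times\{1\}$) has $\partial\Omega_0=\Lambda$, which properly contains the round circle $\Lambda(\Gamma_i)$ of a thrice-punctured-sphere component subgroup; since a Jordan curve cannot properly contain a circle, $\Omega_0$ is not a Jordan domain. (This group also violates the pair condition and is cylindrical, so it is consistent with the proposition itself --- but it refutes the independence of your two correspondences.) Thus the Jordan property of individual components must be extracted from acylindricity as a whole, i.e.\ self-tangencies must be treated on the same footing as tangencies between distinct components; this is precisely where your deferred ``real work'' --- the classification of essential annuli, including those whose two ends lift to the same component of $\Omega$ --- has to carry the load, as it does in the argument the paper cites.
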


One ingredient in the proof is the following lemma, essentially from \cite[Theorem~3]{Mas74}.
\begin{lem}\label{lem:pfp}
Let $\Gamma$ be any nonelementary Kleinian group, and $\Omega_1$, $\Omega_2$ two connected components of its domain of discontinuity $\Omega$. Let $\Gamma_i$ be the stabilizer of $\Omega_i$ in $\Gamma$, and assume $X_i:=\Gamma_i\backslash\Omega_i$ is a Riemann surface of finite type. Suppose $\overline{\Omega_1}\cap\overline{\Omega_2}$ consists of one point $p$. Then
\begin{enumerate}[label=\normalfont{(\arabic*)}]
    \item $p$ is a parabolic fixed point;
    \item Let $\sigma_i$ be a curve on $X_i$ that is not null-homotopic in $\overline{M}:=\Gamma\backslash\{\mathbb{H}^3\cup\Omega\}$. Then $\sigma_1$ and $\sigma_2$ are not homotopic in $\overline{M}$.
\end{enumerate}
\end{lem}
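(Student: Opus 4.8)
The plan is to prove (1) by combining the finite-type hypothesis, which turns $\partial\Omega_1$ into the limit set of a geometrically finite Fuchsian group, with a scaling argument that excludes $p$ from being a radial limit point; and to prove (2) by converting free homotopy in $\overline{M}$ into a conjugacy relation in $\Gamma=\pi_1(\overline{M})$ and analysing intersections of component stabilizers.

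For (1), since $X_1=\Gamma_1\backslash\Omega_1$ has finite type, the Fuchsian group $\Gamma_1=\operatorname{Stab}_\Gamma(\Omega_1)$ is a lattice, hence geometrically finite of the first kind, so $\partial\Omega_1=\Lambda(\Gamma_1)$ and, by the Beardon--Maskit description of limit sets, every point of $\partial\Omega_1$ is either a conical limit point or a bounded parabolic fixed point of $\Gamma_1$. In particular this holds for $p$. A conical limit point of $\Gamma_1$ is a conical limit point of $\Gamma$, so it suffices to show that the tangency of $\overline{\Omega_1}$ and $\overline{\Omega_2}$ at $p$ prevents $p$ from being a radial limit point of $\Gamma$. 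Here I would use the scaling characterization of radial points: if $p$ were radial there would be $g_n\in\Gamma$ whose restriction to a fixed round disk $B\ni p$ is, up to bounded distortion, a Euclidean similarity of scale $\lambda_n\to\infty$ with $g_n(p)$ staying in a fixed compact set. Applying $g_n$ to the two tangent Jordan domains inside $B$ produces, for each large $n$, a pair of components $g_n\Omega_1,g_n\Omega_2$ tangent at $g_n(p)$ and each containing a round ball of definite radius. If infinitely many of these components are distinct, this contradicts the obvious packing bound in $\hat{\mathbb{C}}$; and if they repeat, the repetitions lie in $\operatorname{Stab}_\Gamma(p)$, and because an element fixing $p$ and preserving the tangent configuration must be parabolic, this forces $p$ to be a parabolic fixed point, again contradicting radiality. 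Hence $p$ is a bounded parabolic fixed point of $\Gamma_1$, and in particular a parabolic fixed point of $\Gamma$, proving (1).

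For (2), the point is that $\mathbb{H}^3\cup\Omega$ is contractible and $\Gamma$ acts on it freely and properly discontinuously, so $\overline{M}$ is a $K(\Gamma,1)$ and two loops are freely homotopic in $\overline{M}$ precisely when they represent the same conjugacy class in $\Gamma$. A non-null-homotopic curve $\sigma_i$ on $X_i$ lifts to a nontrivial element $\sigma_i\in\Gamma_i=\operatorname{Stab}_\Gamma(\Omega_i)$, so a homotopy $\sigma_1\simeq\sigma_2$ yields $g\in\Gamma$ with $g\sigma_1g^{-1}=\sigma_2$, that is, a nontrivial element of $\Gamma_2\cap g\Gamma_1g^{-1}=\operatorname{Stab}_\Gamma(\Omega_2)\cap\operatorname{Stab}_\Gamma(g\Omega_1)$. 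When $\Omega_2$ and $g\Omega_1$ are distinct components, any nontrivial common stabilizer element fixes each point of $\overline{\Omega_2}\cap\overline{g\Omega_1}$, and by the tangency hypothesis together with part (1) this set is empty or a single parabolic point; hence such an element can only be a power of the parabolic generating a cyclic intersection, which means $\sigma_1$ and $\sigma_2$ are peripheral loops around the common cusp $p$. For all other (essential, non-peripheral) curves the intersection is trivial and no homotopy exists, which is the assertion of (2); the unique peripheral exception is exactly the cusp annulus that homotopes into the paring.

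The main obstacle is twofold. In (1) it is making the scaling argument rigorous --- producing the definite-size tangent components from the radial sequence and ruling out the degenerate repeating case --- which is where the tangency is genuinely used. In (2) it is the uniform control of $\Gamma_2\cap g\Gamma_1g^{-1}$ over \emph{all} $g\in\Gamma$: this is precisely the intersection-of-component-subgroups theorem of Maskit \cite{Mas74}, and adapting its proof to show every nontrivial such intersection is cyclic parabolic is the technical heart of the lemma.
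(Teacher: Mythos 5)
The paper never proves this lemma: it is imported wholesale, ``essentially from \cite[Theorem~3]{Mas74}'', so your attempt has to be judged on its own merits, and it has genuine gaps. In part (1) the fatal step is the first one: from ``$X_1$ has finite type'' you conclude that $\Gamma_1$ is a lattice, hence geometrically finite, hence by Beardon--Maskit every point of $\partial\Omega_1$ is conical or bounded parabolic. That chain is valid for the Fuchsian group uniformizing $X_1$ acting on the disk, but $\Gamma_1$ is a Kleinian group acting on $\widehat{\mathbb{C}}$ and $\mathbb{H}^3$, and the only identification between the two actions is the Riemann map of $\Omega_1$, which is conformal in the interior and transfers no information about the boundary action or the three-dimensional geometry. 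A finite-type quotient does \emph{not} make $\Gamma_1$ geometrically finite: a singly degenerate surface group has finite-type quotient, is geometrically infinite, and its limit set (which equals $\partial\Omega_1$) contains points that are neither conical nor bounded parabolic. Since the lemma is stated for an \emph{arbitrary} nonelementary Kleinian group --- that generality is exactly why the paper quotes Maskit rather than the easier geometrically finite argument underlying Proposition~\ref{prop:acy_geom_fin} --- the dichotomy you rely on is simply unavailable. Your scaling argument has a second, independent defect: the hypothesis $\overline{\Omega_1}\cap\overline{\Omega_2}=\{p\}$ is purely topological, so there is no ``tangency'' to use, and the blown-up components need not contain round balls of definite radius; infinitely many disjoint sets of definite diameter but vanishing thickness violate no packing bound. (If one assumes $\Gamma$ geometrically finite, the dichotomy can be restored by applying Beardon--Maskit to $\Gamma$ itself, but the geometric half of your argument would still need the missing control near $p$.)

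In part (2) the translation into conjugacy in $\pi_1(\overline{M})=\Gamma$ is correct, and you deserve credit for noticing the peripheral exception at the common cusp --- loops around the two punctures at $p$ \emph{are} freely homotopic through the cusp, a point the literal wording of the lemma glosses over. But the step carrying all the weight, namely that $\Gamma_2\cap g\Gamma_1g^{-1}$ is trivial or parabolic cyclic for \emph{every} $g\in\Gamma$, does not follow from anything you have established: the hypothesis controls $\overline{\Omega_1}\cap\overline{\Omega_2}$ only, and says nothing about $\overline{g\Omega_1}\cap\overline{\Omega_2}$ for $g\neq 1$; such an intersection can a priori be large, and when it is large the intersection of stabilizers can even contain loxodromics, as the quasifuchsian case shows. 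Moreover the case $g\Omega_1=\Omega_2$ is silently skipped: nothing in the hypotheses prevents $\Omega_1$ and $\Omega_2$ from lying in a single $\Gamma$-orbit (tangent disks of a gasket identified by a symmetry of the packing), and then $\Gamma_2\cap g\Gamma_1g^{-1}=\Gamma_2$ and $X_1=X_2$, so the asserted non-homotopy must itself be reformulated before it can be proved. You acknowledge in your closing paragraph that controlling these intersections \emph{is} Maskit's intersection-of-component-subgroups theorem; since that theorem is precisely what the paper cites in lieu of a proof, what you have written for (2) is a partial reduction of the lemma to its own source rather than a proof of it.
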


A \emph{quasiconformal deformation} of $\Gamma$ is a discrete and faithful representation $\xi:\Gamma\to\PSL(2,\mathbb{C})$ that preserves parabolics, induced by a quasiconformal map $f:\hat{\mathbb{C}}\to\hat{\mathbb{C}}$ (i.e.~$\xi(\gamma)=f\circ\gamma\circ f^{-1}$ for all $\gamma\in\Gamma$). In particular, the limit sets of $\Gamma$ and $\xi(\Gamma)$ are quasiconformally homeomorphic via $f$.

The \emph{quasiconformal deformation space} of $\Gamma$ is defined by
$$\mathcal{QC}(\Gamma):=\{\xi:\Gamma\to\PSL(2,\mathbb{C})\text{ is a quasiconformal deformation}\}/\sim$$
where $\xi\sim\xi'$ if they are conjugate by a M\"obius transformation. This space is naturally identified with the quasi-isometric deformation space of the corresponding hyperbolic manifold $M$ (see e.g.\ \cite{Sul81}).

When $\Gamma$ is acylindrical, there exists a unique $\Gamma'\in\mathcal{QC}(\Gamma)$ so that the convex core of $M'=\Gamma'\backslash\mathbb{H}^3$ has totally geodesic boundary \cite[Corollary 4.3]{McM90}. Thus the limit set of any geometrically finite and acylindrical Kleinian group is quasiconformally homeomorphic to a circle packing.

\subsection{Gasket limit sets}
The following characterization of Kleinian groups with gasket limit sets was proved in \cite{LZ23}.
\begin{theorem}\label{thm:GFAC}
Suppose the limit set $\Lambda$ of a finitely generated Kleinian group $\Gamma$ is a gasket, and let $\mathcal{G}$ be its contact graph. Then the corresponding hyperbolic 3-manifold $M$ is homeomorphic to the interior of a compression body $N$. If furthermore $M$ is geometrically finite, then $\mathcal{G}$ is not a tree, $M$ is acylindrical, and the compression body $N$ has empty or only toroidal interior boundary components.
\end{theorem}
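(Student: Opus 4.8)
The plan is to separate the purely topological conclusion (that $M$ is the interior of a compression body) from the geometric refinements that require geometric finiteness, and to extract the latter from the acylindricity characterization in Proposition \ref{prop:acy_geom_fin} together with the totally geodesic deformation of \cite[Corollary 4.3]{McM90}.

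First I would record the consequences of the gasket hypotheses for $\Omega$. By the Ahlfors finiteness theorem, $\Gamma\backslash\Omega$ is a finite union of finite-type Riemann surfaces; in particular there are only finitely many $\Gamma$-orbits of components of $\Omega$, and $M$ is topologically tame. The defining properties of a gasket say precisely that every component of $\Omega$ is a Jordan domain, hence simply connected, that the closures of any two components meet in at most one point, and that no point lies on three closures. To obtain the compression body structure I would invoke the structure theory of Kleinian groups all of whose components of $\Omega$ are simply connected: simple connectivity forces every loop in a conformal boundary surface that is essential there but compresses in $M$ to be absorbed into a $1$-handle structure, so that $\overline M=\Gamma\backslash(\Hyp^3\cup\Omega)$ is built from its incompressible interior boundary by attaching $1$-handles, i.e. $M$ is homeomorphic to the interior of a compression body $N$, with $\partial_- N$ the incompressible interior boundary. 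This is the topological heart of the first assertion and is carried out in \cite[\S 8]{LZ23}.

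Now assume $M$ is geometrically finite. Acylindricity is then immediate from Proposition \ref{prop:acy_geom_fin}: here $\Gamma$ is nonelementary of infinite volume, and the two hypotheses of that proposition—each component of $\Omega$ a Jordan domain, and closures of pairs meeting in at most one point—are exactly two of the defining conditions of a gasket, so $\Gamma$ is acylindrical (and by Lemma \ref{lem:pfp} the contact points are rank-$1$ parabolic fixed points). By \cite[Corollary 4.3]{McM90}, $\Gamma$ then admits a quasiconformal deformation with totally geodesic convex core boundary, and in particular the non-cuspidal boundary $\partial_0 N$ is incompressible.

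The remaining two statements are the crux, and I expect them to be the main obstacle. The key tension to exploit is that the exterior boundary $\partial_+ N$ of a compression body is compressible, whereas acylindricity forces $\partial_0 N$ to be incompressible; the only way to reconcile these is that all of $\partial_+ N$ is cuspidal, i.e. $\partial_+ N$ is pinched along a curve system containing a complete compressing system, the pinched curves being exactly the tangency cusps recorded by the edges of $\mathcal G$. I would first argue that this forces $\mathcal G$ not to be a tree: a complete compressing system on a compressible $\partial_+ N$ is nonempty and contributes handles, which produce an essential loop in the contact pattern (equivalently, positive first Betti number of the quotient contact graph), and one then promotes this to an honest cycle of pairwise tangent complementary components using that the residual set is connected and fills the sphere—were $\mathcal G$ a tree, the induced graph-of-groups splitting of $\Gamma$ over the cyclic parabolic edge groups would yield a non-filling, combinatorially tree-like limit set incompatible with a geometrically finite gasket. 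Finally, to see that $\partial_- N$ is toroidal or empty, I would rule out a higher-genus incompressible interior boundary component: such a component is a totally geodesic surface covered by a single complementary disk, and being disjoint from the pinching locus it produces an essential cylinder, or a compression of $\partial_0 N$, contradicting acylindricity; hence every component of $\partial_- N$ is a torus arising from a rank-$2$ cusp, or $\partial_- N$ is empty. The genuinely technical part—turning the handle count into a cycle of $\mathcal G$ and excluding higher-genus interior boundary—requires a careful analysis of the combination/graph-of-groups structure tying the planar nerve to the $1$-handle decomposition of $N$, which is the technical core carried out in \cite{LZ23}.
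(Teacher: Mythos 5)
The first thing to say is that the paper does not prove Theorem~\ref{thm:GFAC} at all: it is imported from \cite{LZ23} (``The following characterization \dots was proved in [LZ23]''), and the only material proved or recalled in \S\ref{sec:KG} is the surrounding discussion (Proposition~\ref{prop:acy_geom_fin}, Lemma~\ref{lem:pfp}, and the McMullen deformation feeding into Corollary~\ref{cor:GFAC}). So your deferral of the technical core to \cite{LZ23} coincides with the paper's own treatment, and the one step you execute in full is right: acylindricity follows from Proposition~\ref{prop:acy_geom_fin} because a gasket group is nonelementary, has $\Omega\neq\emptyset$ (hence infinite volume), and gasket conditions (1)--(2) are verbatim the limit-set criteria of that proposition. (One slip even here: topological tameness is not a consequence of Ahlfors finiteness --- in the geometrically infinite case it is Agol/Calegari--Gabai --- though it is automatic once $M$ is geometrically finite.)

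The mechanisms you propose for the three structural claims, however, contain genuine gaps, and they misplace where each hypothesis enters. (a) Simple connectivity of the components of $\Omega$ does not force a compression body: a geometrically finite acylindrical group with totally geodesic convex core boundary has all components of $\Omega$ round disks, yet its compact core has several incompressible higher-genus boundary components, none of which $\pi_1$-surjects (that would make it a product, hence cylindrical), so it is not a compression body --- its limit set is a Sierpi\'nski carpet. What forces the compression body structure is the tangency pattern, i.e.\ connectedness of the contact graph, which your sketch never uses for this step. (b) The tree exclusion cannot run through your ``key tension.'' A compressing curve of $\partial_eN$ bounds a disk in $N$ and is trivial in $\pi_1(N)$, whereas every pinching curve is parabolic and hence nontrivial, so the pinching locus can never ``contain a complete compressing system.'' More decisively, handles do not detect cycles of $\mathcal{G}$: in the tree case $M\cong S\times\mathbb{R}$, and when $S$ has punctures its compact core is (compact surface with boundary)$\times[0,1]$, a handlebody --- a nontrivial compression body. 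The loops created by $1$-handles live in the finite quotient graph $\mathcal{G}/\Gamma$, not in $\mathcal{G}$, so no purely topological count separates trees from non-trees; geometric finiteness must enter through the end structure, and the paper records the actual mechanism right after the theorem: when $\mathcal{G}$ is a tree, $M\cong S\times\mathbb{R}$ has one geometrically \emph{infinite} end, contradicting geometric finiteness directly. (c) Likewise, a higher-genus interior boundary component does not per se produce an essential cylinder or a compression; pared acylindricity tolerates cylinders homotopic into the parabolic locus. The natural argument is again connectedness of $\mathcal{G}$: each rank-$1$ cusp annulus lies in a single component of $\partial N$, so all conformal boundary pieces lie on one component of $\partial N$; were that component an interior one, $\partial_eN$ would be contained in the parabolic locus, hence a torus, and $\Gamma$, a quotient of $\pi_1(\partial_eN)\cong\mathbb{Z}^2$, would be elementary --- a contradiction. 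Hence all conformal boundary sits on $\partial_eN$, and every interior component is a cusp torus.
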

Here, a compact orientable irreducible manifold with boundary $(N,\partial N)$ is a \emph{compression body} if the inclusion of one boundary component $\partial_eN$ induces a surjection on $\pi_1$; we refer to this component as the \emph{exterior} boundary. All other boundary components of $N$ are incompressible; we refer to them as \emph{interior} boundary components.
In fact, when $\Gamma$ contains no rank-2 parabolic fixed points, the compression body $N$ has no interior boundary component, so it is a handlebody.
On the other hand, when $\mathcal{G}$ is a tree, $M$ is homeomorphic to $S\times\mathbb{R}$ for some surface $S$ of finite type, and has one geometrically infinite end.

The following result then follows from this characterization and discussion in the previous subsection.
\begin{cor}\label{cor:GFAC}
Let $\Lambda$ be a gasket limit set of a geometrically finite Kleinian group $\Gamma$.
Then $\Lambda$ is quasiconformally homeomorphic to an infinite circle packing.
\end{cor}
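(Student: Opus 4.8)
The plan is to derive the corollary by combining the structural Theorem~\ref{thm:GFAC} with the totally geodesic deformation result recalled at the end of \S\ref{subsec:acy}. First I would apply Theorem~\ref{thm:GFAC} to the geometrically finite group $\Gamma$ with gasket limit set $\Lambda$: its hypotheses hold because $\Gamma$ is finitely generated (being geometrically finite) and $\Lambda$ is assumed to be a gasket, and its conclusion in the geometrically finite case is precisely that $M=\Gamma\backslash\Hyp^3$ is acylindrical (and that the contact graph $\mathcal{G}$ is not a tree). Acylindricity is the hypothesis needed to feed into McMullen's theorem.

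With acylindricity in hand I would invoke \cite[Corollary 4.3]{McM90}, as recorded in \S\ref{subsec:acy}: there is a (unique) $\Gamma'\in\mathcal{QC}(\Gamma)$ such that the convex core of $M'=\Gamma'\backslash\Hyp^3$ has totally geodesic boundary. By the definition of $\mathcal{QC}(\Gamma)$, this deformation is realized by a quasiconformal homeomorphism $f\colon\hCbb\to\hCbb$ conjugating $\Gamma$ to $\Gamma'$, and any such conjugacy carries $\Lambda$ onto the limit set $\Lambda'$ of $\Gamma'$. Hence $\Lambda$ and $\Lambda'$ are quasiconformally homeomorphic via $f$, and it remains only to identify $\Lambda'$ as an infinite circle packing.

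The geometric heart of the last identification is that totally geodesic convex core boundary forces the complementary components to be round disks. Each boundary face of $\core(M')$ lifts to a totally geodesic hyperbolic plane $H\subset\Hyp^3$ whose ideal boundary $\partial_\infty H$ is a round circle in $\hCbb$; the component of $\Omega'=\hCbb-\Lambda'$ facing this face is exactly the open round disk it bounds, since the stabilizer acts on $H\cong\Hyp^2$ with finite covolume and thus has all of $\partial_\infty H$ as its limit circle. Consequently every component of $\Omega'$ is a round disk, i.e.\ $\Lambda'$ is a circle packing. The packing is infinite: since $\mathcal{G}$ is connected and, by Theorem~\ref{thm:GFAC}, not a tree, it contains a cycle and hence $\Omega$ has at least three components; by the classical dichotomy that a Kleinian group has $0$, $1$, $2$, or infinitely many components of discontinuity, $\Omega$—and therefore $\Omega'$—has infinitely many components. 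This exhibits an infinite circle packing quasiconformally homeomorphic to $\Lambda$.

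I do not expect a serious obstacle, as the substantive content is packaged inside Theorem~\ref{thm:GFAC} and \cite[Corollary 4.3]{McM90}. The only point requiring genuine care is the translation of ``totally geodesic boundary'' into ``round components of $\Omega'$''—in particular, matching each component of $\Omega'$ with a single totally geodesic face under the nearest-point retraction and verifying that the relevant stabilizer has full limit circle—together with the bookkeeping, via Proposition~\ref{prop:acy_geom_fin}, that the tangency data (components meeting only at parabolic fixed points, at most one point per pair) is transported by $f$, so that $\Lambda'$ is a circle packing realizing the same contact combinatorics as $\Lambda$.
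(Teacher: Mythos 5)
Your proposal is correct and follows essentially the same route as the paper: Theorem~\ref{thm:GFAC} supplies acylindricity of the geometrically finite group, McMullen's result \cite[Corollary 4.3]{McM90} (as recalled at the end of \S\ref{subsec:acy}) supplies a quasiconformal conjugacy to a group whose convex core has totally geodesic boundary, and hence whose limit set is a circle packing. The paper leaves the translation ``totally geodesic boundary $\Rightarrow$ round complementary components'' and the infinitude of the packing implicit, while you spell them out; this is additional detail, not a different argument.
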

In particular, to prove our main result Theorem~\ref{thm:thmB}, we may restrict our attention to Kleinian groups with circle packing limit sets.

\section{Induced dynamics on the Fatou graph}\label{sec:ifg}
Recall that a gasket Julia set $\mathcal{J}$ is a fat gasket if 
\begin{itemize}
    \item the boundary of each Fatou component contains no cusps;
    \item two Fatou components are tangent to each other if they touch.
\end{itemize}
In this section, we shall prove that dynamics of a rational map with a fat gasket Julia set is restricted.
\begin{theorem}\label{thm:dfg}
Let $f$ be a rational map with a fat gasket Julia set.
Let $\mathcal{G}$ be the Fatou graph for $f$.
Then $f$ induces a simplicial map
$$
f_*: \mathcal{G} \longrightarrow \mathcal{G},
$$
and there exists a unique fixed edge $E_0 \subseteq \mathcal{G}$ of $f_*$ so that every edge is eventually mapped to $E_0$.
\end{theorem}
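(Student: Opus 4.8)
The plan is to first pin down the local behaviour of $f$ at a tangency point, which simultaneously yields the simplicial structure and the fact that tangencies propagate under forward iteration, and then to analyse the induced dynamics on edges.

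\emph{Local degree at tangency points.} First I would fix a tangency point $p$ where two Fatou components $U_1,U_2$ meet, and set $q=f(p)$ and $k=\deg_p f$. Since $\mathcal{J}$ is a fat gasket, $U_1$ and $U_2$ are tangent at $p$; a Jordan domain tangent to a line $L$ at $p$ has tangent cone equal to one of the two half-planes bounded by $L$, and the half-planes for $U_1$ and $U_2$ are complementary. As $f$ behaves like $(z-p)^k$ near $p$, it multiplies angles at $p$ by $k$, so the tangent cone of $f(U_i)$ at $q$ would have opening angle $k\pi$. If $k\ge 3$ this exceeds $2\pi$, forcing $f$ to be non-injective on $U_i$ near $p$ and $q$ to lie in the interior of the Fatou set, contradicting $q\in f(\mathcal{J})=\mathcal{J}$; if $k=2$ the angle equals $2\pi$, so $\partial f(U_i)$ has an outward cusp at $q$, contradicting the no-cusp condition of a fat gasket. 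Hence $k=1$ at every tangency point, so $f$ is a local homeomorphism there. Consequently $f(U_1)$ and $f(U_2)$ again have complementary half-plane tangent cones at $q$; in particular $f(U_1)\ne f(U_2)$ (a single Jordan domain cannot surround the Julia point $q$), and they are tangent at $q$. This shows $f$ sends vertices to vertices and edges to edges, giving the simplicial map $f_*$, and that tangency and non-cuspidality propagate, so forward images of edges are genuine edges.

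\emph{Reduction to a finite periodic core.} Next I would use that $f$ has no wandering domains, so every Fatou component is preperiodic, and that a fat gasket has no parabolic basins (no cusps) and no rotation domains touching their neighbours (an irrational rotation on the boundary has no periodic points, hence no periodic tangencies); thus the periodic components are attracting basin components, of which there are finitely many. For any edge $E=\{U_1,U_2\}$, both endpoints eventually become periodic under $f_*$ and, by the previous step, stay tangent, so the forward orbit of $E$ lands in the finite set $E_{\mathrm{per}}$ of edges joining two periodic components. Hence every edge is preperiodic under $f_*$, and it suffices to prove that $E_{\mathrm{per}}$ contains exactly one periodic edge and that it is fixed. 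A first observation is that the periodic core is connected: joining the endpoints of two periodic edges by an edge-path in the connected graph $\mathcal{G}$ and applying a high iterate $f_*^N$ (with $N$ a common multiple of the relevant periods) pushes the whole path into the subgraph $\mathcal{G}_{\mathrm{per}}$ spanned by periodic vertices and edges. Moreover, at any periodic tangency point the first-return map is a local homeomorphism fixing that point and preserving the pair of complementary half-plane tangent cones; it must therefore fix the common tangent line, forcing a real multiplier, so the point is repelling and the two adjacent components are either both fixed or interchanged by the return map.

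\emph{Main obstacle.} The hard part is to show that $\mathcal{G}_{\mathrm{per}}$ is a single edge, equivalently that there is no periodic edge-orbit of length $\ge 2$ and no second fixed edge; a cyclically permuted necklace $U_0,\dots,U_{n-1}$ of period $n\ge 3$, for instance, would produce an $f_*$-orbit of edges of period $n$ and violate the statement. I expect to exclude such configurations by combining the local real-multiplier constraint with a global count: each periodic component is an attracting basin whose first-return map is a proper branched self-cover, the periodic tangency points on its boundary are boundary fixed points of that return map, and the number of such fixed points is constrained through the Riemann--Hurwitz formula by the critical points captured in the cycle; together with the planarity (non-crossing) of the tangency pattern this should force $\mathcal{G}_{\mathrm{per}}$ to consist of exactly two components tangent at a single fixed point. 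That fixed tangency is the desired edge $E_0$, to which, by the reduction above, every edge eventually flows. Carrying out this counting-and-planarity step so that it leaves only one fixed edge is the principal difficulty, and is where the fat-gasket hypotheses must be used most carefully.
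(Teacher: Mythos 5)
Your first step (local degree $1$ at tangency points, hence the simplicial map $f_*$ and preperiodicity of edges) is sound and essentially matches the paper's Lemma~\ref{lem:njc} and Corollary~\ref{cor:is}. The proof breaks down in your second step, at the claim that ``a fat gasket has no parabolic basins (no cusps),'' from which you conclude that the periodic components are attracting basins and that periodic tangency points are repelling with real multiplier. This is exactly backwards. A parabolic periodic point of index $1$ (one petal) does force a cusp, but a parabolic point of index $2$ (two petals, e.g.\ multiplier $-1$, as at the fixed point of the fat basilica $z^2-3/4$) has two attracting petals each of asymptotic opening $\pi$, so the two Fatou components containing them are \emph{tangent} and cusp-free. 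The paper's Lemma~\ref{lem:ep} proves the opposite of your claim: a \emph{repelling} periodic tangency is impossible --- at a repelling point two invariant Jordan components must meet at an angle (if they met tangentially, a quotient-torus/extremal-length argument forces them to be exact half-planes in the linearizing coordinate, so their closures would share an arc, violating the gasket axioms) --- hence every periodic tangency point is parabolic of index $2$, and the fixed-edge components are parabolic basins, not attracting basins. Note also that your own inference ``real multiplier, so the point is repelling'' silently uses the false no-parabolics claim: a real multiplier on the Julia set can equally be $\pm 1$.

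Because of this, the framework in which you plan to carry out the decisive step (counting repelling boundary fixed points of attracting-basin return maps via Riemann--Hurwitz plus planarity) describes a configuration that cannot occur, so that program cannot be completed as stated; and in any case that step is only sketched (``I expect\ldots,'' ``this should force\ldots''), whereas it is precisely the content of the theorem: existence and uniqueness of a fixed edge absorbing all edges. The paper's route is different and much shorter once parabolicity is in hand: if there were two distinct periodic tangency points, take a shortest path in $\mathcal{G}$ through the four associated vertices, push it forward (shortest paths stay simple under $f_*$) until all its vertices are fixed, and look at three consecutive fixed vertices; the middle Fatou component would then contain an attracting petal of each of two distinct parabolic fixed points, contradicting the fact that orbits in a single Fatou component converge to a single point. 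So the missing ideas are (i) the repelling-implies-angle fact, which is the real dynamical crux, and (ii) a uniqueness argument, for which the combinatorial shortest-path trick replaces your proposed counting scheme.
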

Here we recall that a map is called {\em simplicial} if it maps an edge to an edge.
It follows that the two boundary points $\partial E_0$ are either both fixed, or form a periodic cycle of period 2.
Thus, Theorem \ref{thm:thmC} follows immediately from Theorem \ref{thm:dfg}.
\begin{proof}[Proof of Theorem \ref{thm:thmC}]
After passing $f_*$ to the second iterate if necessary, we may assume that the boundary points $\partial E_0 = \{x, y\}$ are both fixed.
Then we can divide vertex set into two groups $U_x, U_y$ depending on whether the vertex is eventually mapped to $x$ or $y$.
This division gives the bipartite structure of the graph $\mathcal{G}$.
\end{proof}

The proof of Theorem \ref{thm:dfg} consists of several lemmas.
\begin{lem}\label{lem:njc}
Let $f$ be a rational map with a fat gasket Julia set.
Then no critical point is on the boundary of a Fatou component.
\end{lem}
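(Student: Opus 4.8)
\noindent
The plan is to argue by contradiction, using the local normal form of $f$ at a critical point together with the incompatibility of branching with the tangential, cusp-free boundary structure of a fat gasket. Suppose $c$ is a critical point lying on $\partial U$ for some Fatou component $U$, and let $d\ge 2$ be the local degree of $f$ at $c$, so that in suitable holomorphic coordinates $w$ near $c$ and $\zeta$ near $f(c)$ the map reads $\zeta=w^{d}$. Since $f$ carries Fatou components onto Fatou components, $U':=f(U)$ is again a Fatou component of the fat gasket $\mathcal{J}$. Moreover $f(c)\in\partial U'$: if instead $f(c)\in U'$, then $c\in f^{-1}(U')$, an open subset of the Fatou set, contradicting $c\in\partial U\subseteq\mathcal{J}$.

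First I would pin down the local shape of $U$ at $c$, and this is the step I expect to be the main obstacle. The claim is that $U$ subtends interior angle exactly $\pi$ at $c$; concretely, $U$ meets a small circle $\{|w-c|=r\}$ in an arc whose angular measure tends to $\pi$ as $r\to 0$, with the complement of $\overline{U}$ doing the same on the other side. The difficulty is that $\partial U$ is in general a fractal curve with no tangent line at a generic point, so this cannot follow from any a priori smoothness. Instead I would extract it from the gasket axioms: the contact points of $U$ with its neighbouring components accumulate at $c$ from both arcs of $\partial U$, and at each such point $\partial U$ shares a tangent line with a tangent neighbour. The tangential packing of neighbours on either side of $c$ forbids both an outward corner (angle $<\pi$) and an inward pinch (angle $\to 2\pi$, i.e.\ a cusp), and one must make this robust under the quasiconformal distortion inherent in a fat gasket, using that the degree of tangency (the order of contact) is a quasiconformal invariant.

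Granting that $U$ subtends angle $\pi$ at $c$, the normal form $\zeta=w^{d}$ multiplies angles at $c$ by $d$, so $U'$ subtends interior angle $d\pi\ge 2\pi$ at $f(c)$. I would then split along the value of this angle. If $d=2$, the angle is exactly $2\pi$: the two boundary arcs of $\partial U$ issuing from $c$ are folded onto a single ray, so the two boundary arcs of $\partial U'$ issuing from $f(c)$ are mutually tangent there, and $U'$ locally fills the slit plane. This is precisely a cusp of $\partial U'$, contradicting the no-cusp condition of the fat gasket. If $d\ge 3$, the angle $d\pi>2\pi$ means the angle-$\pi$ sector inside $U$ maps onto a sector of angle exceeding $2\pi$, so $U'$ contains a full punctured neighbourhood of $f(c)$. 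Since $\mathcal{J}\subseteq\hat{\mathbb{C}}\setminus U'$, this would make $f(c)$ an isolated point of $\mathcal{J}$, which is impossible because Julia sets are perfect. In either case we reach a contradiction.

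Thus no critical point can lie on $\partial U$, proving the lemma. In the write-up I would isolate the geometric input of the second paragraph as a preliminary claim, stated in the quasiconformally robust form ``$\partial U$ has no cusp at $c$ and $U$ is not pinched to a thin sector there,'' after which the angle-multiplication dichotomy of the third paragraph is short. The delicate point, and the one meriting the most care, is exactly this local angle statement at $c$: it must be derived from the fat gasket tangency and no-cusp axioms together with the quasiconformal invariance of the order of contact, rather than from any smoothness of the fractal boundary $\partial U$.
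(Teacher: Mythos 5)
Your setup (pass to $U'=f(U)$, note $f(c)\in\partial U'$, use the local normal form $w\mapsto w^d$) and your $d=2$ endgame (tangential structure at $c$ doubled into a cusp at $f(c)$) agree with the paper. But there is a genuine gap at exactly the step you flag as delicate: the claim that $U$ subtends interior angle $\pi$ at $c$. The fat-gasket axioms constrain the local geometry only at \emph{contact points} of two Fatou components (tangency there) and forbid cusps; at a boundary point that is not known to be a contact point they impose no angle condition whatsoever, and a fractal Jordan boundary need not have any asymptotic angle at a given point (absence of cusps does not force the angular measure of $U$ on small circles to converge, let alone to $\pi$). Your proposed derivation — that contact points of $U$ with its neighbours accumulate at $c$ from both sides and that ``tangential packing'' forbids corners and pinches — is itself unproven (nothing in the axioms gives such accumulation at an arbitrary boundary point), and even granting it, tangency of tiny neighbours at nearby points puts no uniform constraint on the geometry of $\partial U$ at $c$. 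Since your $d\ge 3$ case (isolated point of $\mathcal{J}$) also rests on this angle claim, both branches of your dichotomy are unsupported.

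The missing idea, which is the heart of the paper's proof, is combinatorial rather than metric: count accesses. Each complementary component of a gasket is a Jordan domain, so by Carath\'eodory each component attached at a given point has exactly one access (one prime end) there. If $\mu\ge 1$ is the number of components attached at $f(c)$, then since $f$ is locally $w\mapsto w^e$ at $c$, these pull back to $e\mu$ local sectors at $c$, lying in $e\mu$ \emph{distinct} components attached at $c$ (two sectors in one component would give that component two accesses at $c$). The gasket condition that no three complementary components share a boundary point forces $e\mu\le 2$, hence $e=2$, $\mu=1$. This simultaneously kills your $d\ge3$ case and shows that $c$ \emph{is} a contact point of exactly two Fatou components — so the tangency at $c$ is now an axiom of the fat gasket, not something to be derived, and your angle-doubling argument produces the cusp on $\partial U'$ at $f(c)$ and the contradiction. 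In short: instead of trying to extract an angle at an arbitrary boundary point, first use the gasket's incidence constraints plus the Jordan-domain property to force $c$ to be a tangency point; everything you wanted from the ``angle $\pi$'' claim then comes for free.
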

\begin{proof}
Suppose for contradiction that $f$ has a critical point $c$ on the boundary of a Fatou component $U$.
Let $V = f(U)$.
Then $f(c) \in \partial V$.
Let $e > 1$ be the multiplicity of the critical point $c$, and $\mu$ be the number of Fatou components attached at $f(c)$.
Then there are $e\mu$ Fatou components attached at $c$.
Since at most $2$ Fatou components are attached to $c$, $e=2$ and $\mu = 1$.
Since the two Fatou components touch tangentially at $c$ and $f$ behaves like $z^2$ near $c$, the boundary of $V$ has a cusp at $f(c)$.
This is a contradiction, and the lemma follows.
\end{proof}

As an immediate corollary, we have
\begin{cor}\label{cor:is}
Let $f$ be a rational map with a fat gasket Julia set.
Then $f$ induces a simplicial map
$$
f_*: \mathcal{G} \longrightarrow \mathcal{G}.
$$
Moreover, every edge of $\mathcal{G}$ is pre-periodic.
\end{cor}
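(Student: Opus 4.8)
The plan is to deduce both assertions from Lemma~\ref{lem:njc} together with two standard facts about the Fatou set: that a rational map carries Fatou components onto Fatou components, and Sullivan's no-wandering-domains theorem. First I would \emph{define} $f_*$ on vertices by sending the vertex associated to a Fatou component $U$ to the vertex associated to $f(U)$; this is well defined because $f(U)$ is again a Fatou component. The real content is to show that $f_*$ carries edges to edges without collapsing them. So let $E$ be the edge coming from two distinct Fatou components $U_1, U_2$ that are tangent at a point $p$, necessarily lying in $\mathcal{J}$. Since $p \in \partial U_1 \cap \partial U_2$, Lemma~\ref{lem:njc} guarantees $p$ is not a critical point, so $f$ is a local biholomorphism near $p$. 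Consequently $f$ maps the two externally tangent disk-like pieces $U_1 \cap W$, $U_2 \cap W$ (for a small neighborhood $W$ of $p$ on which $f$ is injective) to two externally tangent pieces meeting at $f(p)$, lying in $f(U_1)$ and $f(U_2)$ respectively.

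The key step for the simplicial claim is to rule out $f(U_1) = f(U_2)$. I would argue that if these images coincided in a single Fatou component $V$, then $V$ would contain two pieces tangent at $f(p)$ and approaching it from opposite sides of the common tangent line, while $f(p) \in \partial V$; this contradicts the fact that $V$ is a Jordan domain and hence lies locally on one side of its boundary arc near $f(p)$. Therefore $f(U_1) \neq f(U_2)$, the two are tangent at $f(p)$, and $E$ maps to the edge joining $f_*(U_1)$ and $f_*(U_2)$. The gasket axioms (each pair of complementary components touches in at most one point, and no three share a common boundary point) guarantee that edges are in bijection with touching pairs, so this description is unambiguous and $f_*$ is genuinely simplicial.

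For pre-periodicity, I would invoke Sullivan's no-wandering-domains theorem: every Fatou component is eventually periodic. Applying this to $U_1$ and $U_2$ and passing to a common iterate, there is $k$ with $f^k(U_1)$ and $f^k(U_2)$ both periodic, so by the simplicial property $f_*^k(E)$ is an edge joining two periodic Fatou components. Since a rational map has only finitely many periodic Fatou cycles, hence finitely many periodic Fatou components, there are only finitely many such edges, and $f_*$ maps this finite set into itself (it sends periodic components to periodic components). By the pigeonhole principle the forward orbit of $f_*^k(E)$ is eventually periodic, and therefore $E$ itself is pre-periodic.

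The main obstacle I anticipate is the non-collapsing step, namely converting the local injectivity of $f$ at $p$ into the global statement $f(U_1) \neq f(U_2)$; this is exactly where the Jordan-domain hypothesis in the definition of a gasket is used, whereas everything else is a routine application of Sullivan's theorem and the finiteness of periodic cycles.
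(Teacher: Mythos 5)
Your proposal is correct and follows essentially the same route as the paper: define $f_*$ on vertices via the action on Fatou components, use Lemma~\ref{lem:njc} to get local injectivity at tangency points and hence non-collapsing of edges, and deduce pre-periodicity of edges from eventual periodicity of Fatou components (Sullivan). In fact your write-up fills in two steps the paper leaves implicit — the Jordan-domain/tangency argument ruling out $f(U_1)=f(U_2)$, and the pigeonhole argument for edges — so it is, if anything, more complete than the paper's own proof.
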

\begin{proof}
Since vertices in $\mathcal{G}$ are in bijective correspondence with Fatou components of $f$, $f$ naturally induces a map $f_*$ on the vertex set of $\mathcal{G}$.
By Lemma \ref{lem:njc}, there are no critical points on the boundary of a Fatou component. 
Thus, if $v,w$ are adjacent in $\mathcal{G}$, then $f_*(v) \neq f_*(w)$.
Hence, $f_*(v)$ and $f_*(w)$ are adjacent.
Therefore, we can extend the map $f_*$ to $\mathcal{G}$ by sending an edge $E = [v,w]$ to the edge $[f_*(v), f_*(w)]$.
By construction, the induced map is simplicial.

Since each vertex is pre-periodic, by construction, each edge is also pre-periodic.
\end{proof}

Let $x$ be a parabolic fixed point of $f$.
Let $q$ be the smallest positive integer so that $(f^q)'(x) = 1$.
Then near $x$, we have 
$$
f^q(z) = (z-x) + a(z-x)^{k+1} + ...
$$
We call $k$ the {\em parabolic index} of $x$ and $k+1$ the {\em multiplicity} of the parabolic fixed point.
Note that $q$ must divide $k$ (see \cite[Chapter 7]{Mil06}).
\begin{lem}\label{lem:ep}
Let $f$ be a rational map with a fat gasket Julia set.
Let $x$ be a common boundary point of two Fatou components $U, V$.
Then $x$ is eventually mapped to a unique parabolic fixed point with parabolic index $2$.
\end{lem}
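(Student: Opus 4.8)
The plan is to show that the common boundary point $x$ of two Fatou components $U, V$ is eventually periodic, that its periodic cycle consists of parabolic points, and that the relevant parabolic index equals $2$. The starting observation is that by Corollary~\ref{cor:is} the edge $E = [v_U, v_V]$ of $\mathcal{G}$ corresponding to the touching pair $(U,V)$ is pre-periodic under $f_*$. Since $f_*$ is simplicial and $E$ is pre-periodic, the point $x$ — which is determined as the unique touching point of the two Fatou components corresponding to the endpoints of $E$ — is itself eventually periodic under $f$. So after finitely many iterates we may assume $x$ lies on a periodic edge, and after replacing $f$ by an iterate, that $x$ is a fixed point which is the common boundary point of two Fatou components $U', V'$ that are themselves fixed (or swapped, which an iterate again removes).

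\emph{The next step} is to classify the local dynamics at such a fixed touching point $x$. The point $x$ cannot be attracting or repelling in the naive sense forcing a branched behavior, because the fat gasket condition says the two components $U', V'$ meet \emph{tangentially} at $x$ with no cusp. I would analyze the multiplier $\lambda = f'(x)$. If $|\lambda| \neq 1$, then $x$ is either attracting or repelling; an attracting fixed point on the Julia set is impossible, and a repelling fixed point with two tangent Fatou components touching at it would force the two local boundary arcs to be mapped to boundary arcs with a definite expansion, which is incompatible with maintaining a single tangential contact point without producing a cusp or separating the components. The Cremer/Siegel case $|\lambda|=1$ with $\lambda$ not a root of unity is excluded because $x$ is on the boundary of (and accessible from) Fatou components, forcing it into the closure of the critical orbit in a way incompatible with a linearizable irrational rotation bounding two tangent Fatou components. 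Hence $\lambda$ must be a root of unity, i.e. $x$ is parabolic.

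\emph{Finally}, I would pin down the parabolic index. Near a parabolic fixed point $x$ with $(f^q)'(x)=1$ and local form $f^q(z) = (z-x) + a(z-x)^{k+1} + \cdots$, there are exactly $k$ attracting and $k$ repelling directions, and the Fatou components abutting $x$ are organized into $k$ attracting petals separated by the repelling directions. The contact graph being a fat gasket forces exactly two Fatou components to touch tangentially at $x$, and the tangency (as opposed to a cusp or a transversal crossing) is governed precisely by the number of petals: the two components meet tangentially, with matching tangent line and no cusp, exactly when $k = 2$. If $k = 1$ the local picture is a single petal producing a cusp-type contact (excluded by the fat gasket hypothesis), and if $k \geq 3$ more than two Fatou components would abut $x$, violating condition (3) in the definition of a gasket. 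So $k = 2$, giving parabolic index $2$. Uniqueness of the target parabolic point follows because the simplicial dynamics sends $E$ to a unique periodic edge, whose stabilized touching point is the unique parabolic point in question.

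\textbf{The main obstacle} I expect is the careful exclusion of the non-parabolic cases in the second step, and in particular ruling out a repelling fixed point as the touching point: one must argue geometrically that a repelling germ cannot preserve a pair of Fatou components meeting at a single tangential point without creating a cusp or destroying the single-point-contact condition, which requires understanding how $f$ acts on the two boundary arcs and their tangency data near $x$. Relating the local normal form of $f^q$ to the tangency type (cusp versus tangential contact versus transversal) and matching it to the parabolic index is the technical heart of the argument.
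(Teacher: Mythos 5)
Your local analysis at a periodic touching point follows the paper's own line: pre-periodicity of $x$ via Corollary~\ref{cor:is}, exclusion of the repelling case because tangential contact is incompatible with the self-similar local picture there, and determination of the parabolic index via petals (index $\geq 3$ would put three Fatou components at $x$, violating the gasket condition; index $1$ would force a cusp on one boundary, violating fatness). Your exclusions of the repelling and Cremer cases are stated loosely, but they are not materially briefer than the paper's own treatment, so the real problem lies elsewhere.

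The genuine gap is the uniqueness assertion. The lemma claims there is a \emph{single} parabolic fixed point $p$ such that the touching point of \emph{every} pair of touching Fatou components eventually maps to $p$; this global uniqueness is exactly what Theorem~\ref{thm:dfg} (unique fixed edge $E_0$ attracting all edges) and hence the bipartiteness Theorem~\ref{thm:thmC} require. Your justification --- that ``the simplicial dynamics sends $E$ to a unique periodic edge'' --- only says the forward orbit of one given edge is well defined; it does not rule out two distinct periodic edges (hence two distinct parabolic cycles serving as targets for different touching points), nor does it show that the target point is fixed by $f$ itself rather than merely periodic. The paper spends the bulk of its proof on precisely this point: assuming two distinct periodic touching points, it takes a shortest path in the Fatou graph through the four associated vertices, uses simpliciality and minimality to keep all iterates of this path simple, reduces to a path all of whose vertices are fixed, and then derives a contradiction from three consecutive fixed vertices $a,b,c$: the middle component $U_b$ would contain attracting petals of two distinct parabolic fixed points, which is impossible since all orbits in a single Fatou component converge to the same point. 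Fixedness of the target (rather than mere periodicity) is then a corollary of this uniqueness, since the orbit of a periodic touching point consists of periodic touching points, which must all coincide. Without an argument of this kind, your proof only shows that each touching point lands on \emph{some} parabolic periodic point of index $2$, which is strictly weaker than the lemma and insufficient for the applications downstream.
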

\begin{proof}
Assume first that $x$ is periodic.
Suppose for contradiction that $x$ is a repelling periodic point.
Then the two Fatou components must touch at an angle at $x$.
This is a contradiction.
Thus $x$ is a parabolic fixed point.

Note that the parabolic index at $x$ is at most $2$, as there are at most 2 attracting petals at $x$.
Suppose for contradiction that the parabolic index is $1$.
Then there is only 1 attracting petal at $x$.
Interchanging the role of $U$ and $V$ if necessary, we assume that $U$ contains the attracting petal of $x$.
Then $\partial U$ has a cusp at $x$, which is a contradiction.
Thus, the parabolic index at $x$ is $2$.

We claim that there is a unique periodic point, which is necessarily a fixed point, on the intersection of the boundaries of two Fatou components.
\begin{proof}[Proof of the claim]
Suppose for contradiction that there are 2 distinct periodic points $x_i \in \partial U_i \cap \partial V_i$, $i=1,2$ where $U_i \neq V_i$ are different Fatou components.
After passing to an iterate, we may assume that $U_i$ and $V_i$ are fixed.
Let $u_i, v_i$ be the corresponding vertices in $\mathcal{G}$.
Since the Fatou graph $\mathcal{G}$ is connected, there is a path containing $u_1, u_2, v_1, v_2$.
Let $\mathcal{C}$ be the shortest path containing $u_1, u_2, v_1, v_2$ with respect to the edge metric on $\mathcal{G}$.
Note that in particular, $\mathcal{C}$ must be a simple path.
Since $u_i, v_i$ are fixed by $f_*$, $f_*(\mathcal{C})$ still contains $u_1, u_2, v_1, v_2$.
Since we assume $\mathcal{C}$ has the shortest length, $f_*(\mathcal{C})$ is a simple path.
By induction, $f_*^n(\mathcal{C})$ is a simple path for any $n$.

Since every vertex is pre-periodic, replace $\mathcal{C}$ by $f_*^k(\mathcal{C})$ for some $k$ if necessary, we assume that all the vertices on $\mathcal{C}$ are fixed.
Since $x_1 \neq x_2$, the length of $\mathcal{C}$ is at least $2$.
Let $a, b, c$ be three consecutive vertices on $\mathcal{C}$, and let $U_a, U_b, U_c$ be the corresponding Fatou components.
Let $x_{ab} \in \partial U_a \cap \partial U_b$ and $x_{bc} \in \partial U_b \cap \partial U_c$.
Then $x_{ab}$ and $x_{bc}$ are both fixed points.
By the previous argument, $x_{ab}$ and $x_{bc}$ are parabolic fixed points with parabolic index $2$.
Thus, $U_b$ contains both the attracting petals for $x_{ab}$ and $x_{bc}$, which is a contradiction.
\end{proof}

By Corollary \ref{cor:is}, every common boundary point of two Fatou components is pre-periodic.
By the previous claim, it is eventually mapped to the unique parabolic fixed point with parabolic index $2$.
\end{proof}

We are now ready to prove Theorem \ref{thm:dfg}.
\begin{proof}[Proof of Theorem \ref{thm:dfg}]
By Corollary \ref{cor:is}, $f$ induces a simplicial map $f_*: \mathcal{G} \longrightarrow \mathcal{G}$.
By Lemma \ref{lem:ep}, every edge of $\mathcal{G}$ is eventually mapped to a fixed edge $E_0$.
\end{proof}

\section{Construction of fat gasket Julia set}\label{sec:rfg}
In this section, we prove the realization theorem of fat gasket when there are no critical points on the Julia set.
We will then consider the quadratic case, and prove Theorem \ref{thm:thmD}.

Let $\mathcal{G}$ be a {\em simple plane graph}, i.e., an embedded simple graph in $\hat\C$.
Two simple plane graphs are said to be isomorphic if there is a homeomorphism of $\hat\C$ inducing the graph isomorphism.
Let $F:\mathcal{G} \longrightarrow \mathcal{G}$ be a map.
We say $F$ is a {\em simplicial branched covering} on $\mathcal{G}$ if 
\begin{itemize}
    \item $F$ is a simplicial map; and
    \item $F$ is the restriction of a branched covering $\widetilde{F}$ on $\hat\C$ with critical points contained in the vertex set of $\mathcal{G}$.
\end{itemize}
We say $F$ has degree $d$ if the branched covering $\widetilde{F}$ has degree $d$.
We apply Thurston theory of rational maps to prove the following key result of this section.
\begin{theorem}\label{thm:nto}
Let $\mathcal{G}$ be a connected simple plane graph.
Let $F: \mathcal{G} \longrightarrow \mathcal{G}$ be a degree $d$ simplicial branched covering.
Let $E_0 = [a,b]$ be an edge of $\mathcal{G}$.
Suppose that
\begin{itemize}
    \item $\mathcal{G}$ is {\em backward invariant};
    \item $E_0$ is fixed and any edge $E$ of $\mathcal{G}$ is eventually mapped to $E_0$;
    \item both $a, b$ are contained in critical cycles;
    \item $\mathcal{G} - \Int(E_0)$ is connected.
\end{itemize}
Then $F: \mathcal{G} \longrightarrow \mathcal{G}$ can be realized as the dynamics on a Fatou graph of a rational map with fat gasket Julia set.
\end{theorem}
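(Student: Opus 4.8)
The plan is to build a topological model and then invoke Thurston theory, but in its \emph{geometrically finite} form, since a fat gasket forces the contact points of Fatou components to be parabolic rather than repelling. First I would thicken the graph $\mathcal{G}$: replace each vertex $v$ by a closed Jordan disk $D_v$, and realize each edge $[v,w]$ as a single tangential contact point $p_{vw}\in\partial D_v\cap\partial D_w$, so that $\bigcup_v D_v$ is an abstract fat gasket with contact graph $\mathcal{G}$ whose complement is a gasket $K$. I would then extend $F$ to a branched covering $\widetilde F\colon\hat\C\to\hat\C$ of degree $d$ that carries $D_v$ onto $D_{F(v)}$ with the local degree dictated by the simplicial branched covering structure, carries contact points to contact points compatibly with $F$ on edges, and is modeled near the contact point $x_0$ of $E_0$ by a parabolic germ with two petals (multiplier $1$ with two petals, one in $D_a$ and one in $D_b$, when $a,b$ are fixed; multiplier $-1$ interchanging the two petals when $a,b$ are swapped, so $q=2$). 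Because $a,b$ lie in critical cycles, the critical points can be placed inside $D_a,D_b$ so that each petal at $x_0$ is fed by a critical orbit; the hypotheses that every edge is eventually mapped to $E_0$ and that $\mathcal{G}$ is backward invariant then let me arrange $\widetilde F$ so that its postcritical set is finite away from the parabolic cycle of $x_0$, i.e.\ $\widetilde F$ is a geometrically finite (sub-hyperbolic, semi-rational) branched covering with contact graph exactly $\mathcal{G}$.

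Next I would verify that $\widetilde F$ carries no Thurston obstruction. The decisive structural input is the strong contraction toward $E_0$: every edge, hence (after thickening) every contact point and every gap of $K$, is eventually mapped into the fixed edge $E_0$, while $\mathcal{G}-\Int(E_0)$ is connected. I would argue that any $\widetilde F$-stable multicurve must consist of curves peripheral to the disks $D_v$ or to the parabolic cusp at $x_0$: the connectivity of $\mathcal{G}-\Int(E_0)$ prevents an essential curve from persistently separating the graph into two dynamically recurrent pieces, so backward iteration pushes every candidate curve toward $E_0$ and no essential non-peripheral curve survives with positive transition weight. Consequently there is no Levy cycle, the Thurston linear map on any stable multicurve has spectral radius less than $1$, and $\widetilde F$ is unobstructed.

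With the obstruction ruled out, I would apply the geometrically finite analogue of Thurston's realization theorem (the semi-rational / combinatorial-equivalence machinery for maps with parabolic cycles) to produce a rational map $f$ of degree $d$ that is combinatorially equivalent to $\widetilde F$ relative to its marked set, with a parabolic cycle at the point corresponding to $x_0$. Finally I would read off the dynamics: the disks $D_v$ become the Fatou components of $f$, which are the components of the parabolic basins of $x_0$; properness of $f$ on each disk (controlled by the prescribed local degrees) forces their boundaries to be Jordan curves, they touch exactly along the contacts prescribed by $\mathcal{G}$, the touching is tangential with parabolic index $2$ so there are no cusps, and the induced map on the contact graph is $F$. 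Hence $\Julia(f)$ is a fat gasket whose Fatou graph is $\mathcal{G}$ with induced dynamics $F$, as required.

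I expect the main obstacle to be the two intertwined analytic points of the middle steps: (i) setting up the \emph{parabolic} model and the correct notion of geometrically finite combinatorial equivalence, so that the realization theorem applies with a parabolic (rather than attracting or superattracting) cycle at $x_0$ while still yielding that the $D_v$ are genuine Jordan Fatou components; and (ii) converting the purely combinatorial statement that ``everything flows into $E_0$'' into the quantitative conclusion that every $\widetilde F$-stable multicurve has Thurston eigenvalue $<1$. It is in this second point that the hypotheses ``$E_0$ fixed with every edge eventually mapped to $E_0$'', ``$\mathcal{G}$ backward invariant'', and ``$\mathcal{G}-\Int(E_0)$ connected'' must be used together to exclude Levy-type cycles.
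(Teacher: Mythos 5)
Your architecture is genuinely different from the paper's, and the difference matters. You build the parabolic tangency directly into the topological model and then appeal to the geometrically finite (semi-rational) realization theorem, so your model has an \emph{infinite} postcritical set accumulating at the parabolic point $x_0$. The paper instead stays entirely in the post-critically finite category: it realizes $F$ first as a PCF rational map $f$ via the Douady--Hubbard theorem (Lemma \ref{lem:pcfc}), where the common fixed boundary point is repelling, and only afterwards creates the parabolic index-$2$ point by a simple pinching deformation (\cite{CT18}, \cite{HT04}), which preserves the topological dynamics on the Julia set and makes the two fixed Fatou components tangent; the fat gasket structure is then pulled back. This ordering is not cosmetic: it keeps the obstruction analysis in the clean finite-$P(f)$ setting, whereas your route forces you to set up and verify obstruction theory for a map whose postcritical set accumulates at a parabolic cycle, which is strictly harder and is exactly the machinery you were hoping would do the work for you.

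The genuine gap is the middle step, which you yourself flag as the main obstacle: your claim that connectivity of $\mathcal{G}-\Int(E_0)$ ``prevents an essential curve from persistently separating the graph'' and that ``backward iteration pushes every candidate curve toward $E_0$'' is a restatement of the desired conclusion, not an argument. The paper supplies the missing mechanism. Take the irreducible arc system $\Lambda=\{E_0\}$ and let $\Sigma$ be an irreducible obstruction in minimal position; by Pilgrim--Tan \cite[Theorem 3.2]{PT98}, either $\Sigma\cdot\Lambda=0$, or (since $f(E_0)=E_0$) $E_0$ is the unique component of $f^{-n}(E_0)$ meeting $\widetilde{\Sigma}_n$. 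In both cases $\widetilde{\Sigma}_n$ is disjoint from $f^{-n}(E_0)-E_0$. Now connectivity of $\mathcal{G}-\Int(E_0)$ produces a finite subgraph $\mathcal{H}$ with $\mathcal{H}-E_0$ connected and containing $P(f)$, and the hypothesis that every edge is eventually mapped to $E_0$ gives $\mathcal{H}\subseteq f^{-N}(E_0)$ for some $N$; hence $\Sigma$ is disjoint from a connected set containing all postcritical points, so every curve of $\Sigma$ is inessential, a contradiction. Without this argument (or an equivalent one, transplanted to the semi-rational category) your proof does not close; with it, your direct parabolic realization could in principle be completed, but you would still owe the verification that the semi-rational obstruction criterion reduces to this same combinatorial statement near the parabolic accumulation point.
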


\begin{rmk}
We remark that the first three conditions are necessary by Theorem \ref{thm:dfg}.
When there are no critical points on the Julia set, then it is not hard to see that if $\mathcal{G}-\Int(E_0)$ is disconnected, then there exists a simple closed curve $\gamma$ separating the post-critical points in the two components of $\mathcal{G} - \Int(E_0)$, and $\gamma$ is a Levy cycle.
So the topological branched covering is Thurston obstructed.
Therefore, the last condition is also necessary when no critical points are on the Julia set.
\end{rmk}

\subsection{Finite core}\label{subsec:fc}
In this subsection, we shall see that this infinite simple plane graph in Theorem \ref{thm:nto} can be constructed from its finite core.

Let $F: \mathcal{G} \longrightarrow \mathcal{G}$ be a degree $d$ simplicial branched covering as in Theorem \ref{thm:nto}.
Let $\mathcal{H}\subseteq\mathcal{G}$ be a finite connected graph containing all critical values and the fixed edge $E_0$.
Since every edge is eventually mapped to $E_0$, there exists a constant $N$ so that $F^N(\mathcal{H}) = E_0$.

Define $\mathcal{G}^0 = \bigcup_{k=0}^N F^k(\mathcal{H})$ and $\mathcal{G}^1 = F^{-1}(\mathcal{G}^0)$.
We shall call $F: \mathcal{G}^1 \longrightarrow \mathcal{G}^0$ a {\em finite core} of $F: \mathcal{G} \longrightarrow \mathcal{G}$.
We remark that since the subgraph $\mathcal{H}$ in the construction is not unique, finite cores are not unique.

\begin{lem}\label{lem:fc}
Let $F: \mathcal{G} \longrightarrow \mathcal{G}$ be a degree $d$ simplicial branched covering as in Theorem \ref{thm:nto}.
Let $F: \mathcal{G}^1 \longrightarrow \mathcal{G}^0$ be its finite core.
Then
\begin{itemize}
    \item $\mathcal{G}^0 \subseteq \mathcal{G}^1$;
    \item $\mathcal{G}^1$ is connected;
    \item $\mathcal{G} = \bigcup_{k=0}^\infty\mathcal{G}^k$, where $\mathcal{G}^k= F^{-1}(\mathcal{G}^{k-1})$;
    \item $\mathcal{G}^1-\Int(E_0)$ is connected.
\end{itemize}
\end{lem}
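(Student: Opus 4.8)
The plan is to verify the four bullet points of Lemma~\ref{lem:fc} in order, using the construction $\mathcal{G}^0 = \bigcup_{k=0}^N F^k(\mathcal{H})$ and $\mathcal{G}^1 = F^{-1}(\mathcal{G}^0)$ together with the hypothesis that $\mathcal{G}$ is backward invariant (so $F^{-1}$ of a subgraph is again a subgraph of $\mathcal{G}$).

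\textbf{First bullet ($\mathcal{G}^0 \subseteq \mathcal{G}^1$).} Since $F$ is a simplicial branched covering, $F(\mathcal{G}^0) \subseteq \mathcal{G}^0$: indeed $F(F^k(\mathcal{H})) = F^{k+1}(\mathcal{H})$, which for $k < N$ is one of the defining pieces, and for $k = N$ equals $F(E_0) = E_0 = F^0(\mathcal{H}) \cap \cdots$, so in all cases $F(F^k(\mathcal{H})) \subseteq \mathcal{G}^0$. Hence $\mathcal{G}^0 \subseteq F^{-1}(F(\mathcal{G}^0)) \subseteq F^{-1}(\mathcal{G}^0) = \mathcal{G}^1$.

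\textbf{Second bullet (connectedness of $\mathcal{G}^1$).} I would argue that $\mathcal{G}^0$ is connected and then that taking a full preimage preserves connectedness here. The graph $\mathcal{H}$ is connected by hypothesis, and $F^k(\mathcal{H})$ is connected as a continuous image; moreover each $F^k(\mathcal{H})$ contains $E_0$ (because $E_0$ is fixed and $E_0 \subseteq \mathcal{H}$), so the union $\mathcal{G}^0$ is a union of connected sets sharing the common edge $E_0$, hence connected. For $\mathcal{G}^1 = F^{-1}(\mathcal{G}^0)$: every vertex $v \in \mathcal{G}^1$ satisfies $F(v) \in \mathcal{G}^0$, and since $\mathcal{G}^0$ is connected and contains $E_0$ with both endpoints in critical cycles, I would lift a path in $\mathcal{G}^0$ from $F(v)$ to a point of $E_0$; the lift stays in $\mathcal{G}^1$ and connects $v$ to a preimage of $E_0$. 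The main obstacle is controlling the path-lifting across critical points of $F$: because critical points lie in the vertex set, a lift may fail to be unique, but one can still lift \emph{some} path ending at a preimage of a point of $E_0$, and since $a,b$ are in critical cycles all their $F^{-1}$-preimages are connected through the critical vertices back to $E_0$. So every component of $\mathcal{G}^1$ meets $F^{-1}(E_0)$, and $F^{-1}(E_0)$ is itself connected to $E_0 \subseteq \mathcal{G}^1$ through the critical vertices $a,b$, giving connectedness.

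\textbf{Third and fourth bullets.} For $\mathcal{G} = \bigcup_k \mathcal{G}^k$ with $\mathcal{G}^k = F^{-1}(\mathcal{G}^{k-1})$: since every edge of $\mathcal{G}$ is eventually mapped to $E_0 \subseteq \mathcal{G}^0$, any edge $E$ with $F^m(E) = E_0$ lies in $F^{-m}(\mathcal{G}^0) = \mathcal{G}^m$, so the union exhausts $\mathcal{G}$; the reverse inclusion is immediate from backward invariance. For the last bullet, $\mathcal{G}^1 - \Int(E_0)$ connected, I would use the global hypothesis that $\mathcal{G} - \Int(E_0)$ is connected (a standing assumption of Theorem~\ref{thm:nto}) together with the fact that removing $\Int(E_0)$ leaves the two endpoints $a,b$, which are in critical cycles and hence have preimages making $\mathcal{G}^1$ still join up after removing the open edge. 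Concretely, connectedness of $\mathcal{G}^1 - \Int(E_0)$ reduces to showing that the finitely many preimage pieces attached at $a$ and $b$ are each connected to the rest of $\mathcal{G}^0 - \Int(E_0)$ without using the interior of $E_0$; this follows from the second-bullet argument applied to the connected graph $\mathcal{G}^0 - \Int(E_0)$, which is connected since $\mathcal{G}^0$ is built from copies of $\mathcal{H}$ glued along $E_0$ but $\mathcal{H} - \Int(E_0)$ remains connected because $\mathcal{G} - \Int(E_0)$ is. I expect this fourth bullet to be the technically subtlest point, as it requires tracking that no component of $\mathcal{G}^1$ is separated from the rest precisely by deleting $E_0$.
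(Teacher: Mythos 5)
Your first and third bullets are fine and agree with the paper's proof. The problems are in the second and fourth bullets, which are exactly where the content of the lemma lies. For the second bullet, path lifting only shows that every component of $\mathcal{G}^1$ contains \emph{some} preimage of a point of $E_0$; it does not show that the various preimages of $a$ and $b$ lie in a single component, and your justification for that step (``since $a,b$ are in critical cycles all their $F^{-1}$-preimages are connected through the critical vertices back to $E_0$'') is a non sequitur: the fact that the cycles through $a$ and $b$ contain critical points says nothing about how the other preimages of $a$ attach to anything. Note that a connected graph can perfectly well have a disconnected preimage under a degree $d$ branched cover (e.g.\ a cycle whose preimage is two disjoint cycles, which happens when both critical values of a degree $2$ cover lie in one complementary face). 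What rules this out here is the ingredient you never use: $\mathcal{H}$ contains all critical \emph{values}, so $\mathcal{G}^1=\widetilde{F}^{-1}(\mathcal{G}^0)$ contains all critical \emph{points}, hence each face of $\mathcal{G}^1$ maps to a face of $\mathcal{G}^0$ by an unbranched covering. Faces of the connected graph $\mathcal{G}^0$ are disks, coverings of disks are homeomorphisms, so every face of $\mathcal{G}^1$ is a disk, and a finite plane graph all of whose faces are disks is connected. This planarity argument is the paper's proof and is the missing idea.

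The fourth bullet contains an outright false claim: $\mathcal{G}^0 - \Int(E_0)$ need \emph{not} be connected. Nothing in the construction prevents $\mathcal{G}^0$ from being a tree; indeed, in the paper's own application to quadratic maps (\S\ref{sec:CT}), the canonical core $\mathcal{G}^0$ \emph{is} a tree, so $\mathcal{G}^0-\Int(E_0)$ is disconnected there, while $\mathcal{G}^1-\Int(E_0)$ is connected (it contains the critical loop). So connectivity of $\mathcal{G}^1-\Int(E_0)$ cannot be reduced to connectivity of $\mathcal{G}^0-\Int(E_0)$ plus attaching preimage pieces; the mechanism is entirely different. The paper argues by contradiction: if $\mathcal{G}^1-\Int(E_0)$ were disconnected, then $E_0$ lies on no cycle of $\mathcal{G}^1$, so there is a unique face $U$ of $\mathcal{G}^1$ with access to both sides of $E_0$. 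Since $E_0$ is fixed and $U$ is contained in the unique face of $\mathcal{G}^0$ with access to both sides of $E_0$, one gets $U\subseteq V:=\widetilde{F}(U)$, and $(\widetilde{F}|_{U})^{-1}(U)$ is then a face of $\mathcal{G}^2$ with access to both sides of $E_0$; inductively every $\mathcal{G}^k-\Int(E_0)$ is disconnected, hence $E_0$ lies on no cycle of $\mathcal{G}=\bigcup_k\mathcal{G}^k$, contradicting the standing hypothesis of Theorem \ref{thm:nto} that $\mathcal{G}-\Int(E_0)$ is connected. Your instinct to invoke that global hypothesis was correct, but it enters through this pull-back induction on faces, not through any connectivity of $\mathcal{G}^0-\Int(E_0)$.
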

\begin{proof}
Since $F(\mathcal{G}^0) \subseteq \mathcal{G}^0$, we have that $\mathcal{G}^0 \subseteq \mathcal{G}^1$.

Since $\mathcal{G}^0$ contains all critical values, $\mathcal{G}^1$ contains all critical points.
Since $\mathcal{G}^1 = \widetilde{F}^{-1}(\mathcal{G}^0)$, $\widetilde{F}$ is a homeomorphism between a face of $\mathcal{G}^1$ and a face of $\mathcal{G}^0$.
Therefore, each face of $\mathcal{G}^1$ is simply connected, so $\mathcal{G}^1$ is connected.

Apply the above argument inductively, we have a sequence of finite connected simple plane graphs
$$
\mathcal{G}^0 \subseteq \mathcal{G}^1 \subseteq \mathcal{G}^2 \subseteq ...
$$
where $\mathcal{G}^k = F^{-1}(\mathcal{G}^{k-1})$.
Since every edge of $\mathcal{G}$ is eventually mapped to $E_0$, we have $\mathcal{G} = \bigcup_{k=0}^\infty\mathcal{G}^k$.

Suppose $\mathcal{G}^1-\Int(E_0)$ is not connected.
Then let $U$ be the unique open face of $\mathcal{G}^1$ whose boundary contains $E_0$.
Note that $U$ has access to two sides of $E_0$.
Let $V = \widetilde{F}(U)$. 
Note that $\partial V$ is a union of edges in $\mathcal{G}^0$.
Since $E_0$ is fixed, and $U$ is a face of $\mathcal{G}^1$, we have $U \subseteq V$.
In particular, $V$ has access to two sides of $E_0$.
Since $\widetilde{F}:U \longrightarrow V$ is a homeomorphism, $\widetilde{F}^{-1}|_{U\longrightarrow V}(U) \subseteq U$ is a face of $\mathcal{G}^{2}$ with access to two sides of $E_0$.
Therefore, $\mathcal{G}^2-E_0$ is not connected.
Thus, inductively, we have $\mathcal{G}^k-\Int(E_0)$ is not connected for all $k$.
This implies $\mathcal{G} - \Int(E_0)$ is not connected, which is a contradiction.
\end{proof}

Conversely, one can easily show that if we start with two finite connected simple plane graphs $\mathcal{G}^0 \subseteq \mathcal{G}^1$, a degree $d$ simplicial branched covering 
$$
F:\mathcal{G}^1 \longrightarrow \mathcal{G}^0
$$
and an edge $E_0 = [a,b]$ of $\mathcal{G}^0$ so that
\begin{itemize}
    \item $E_0$ is fixed and any edge $E$ of $\mathcal{G}^1$ is eventually mapped to $E_0$;
    \item $\mathcal{G}^1 - \Int(E_0) $ is connected.
\end{itemize}
Then we can construct a degree $d$ branched covering $F$ on the union
$$
F: \mathcal{G} = \bigcup_{k=0}^\infty\mathcal{G}^k \longrightarrow \mathcal{G},
$$
where $\mathcal{G}^k = \widetilde{F}^{-1}(\mathcal{G}^{k-1})$ so that it satisfies the assumptions for Theorem \ref{thm:nto}.

\subsection{Thurston theory of rational maps}
In this subsection, we shall briefly summarize some basics of Thurston theory of rational maps.

A post-critically finite branched covering of a topological $2$-sphere $\mathbb{S}^2$ is called a {\em Thurston map}. 
We denote the post-critical set of a Thurston map $f$ by $P(f)$. 
Two Thurston maps $f$ and $g$ are \emph{equivalent} if there exist two orientation-preserving homeomorphisms $h_0,h_1:(\mathbb{S}^2,P(f))\to(\mathbb{S}^2,P(g))$ so that $h_0\circ f = g\circ h_1$ where $h_0$ and $h_1$ are isotopic relative to $P(f)$.

A set of pairwise disjoint, non-isotopic, essential, simple, closed curves $\Sigma$ on $\mathbb{S}^2\setminus P(f)$ is called a {\em curve system}. A curve system $\Sigma$ is called $f$-stable if for every curve $\sigma\in\Sigma$, all the essential components of $f^{-1}(\sigma)$ are homotopic rel $P(f)$ to curves in $\Sigma$. We associate to an $f$-stable curve system $\Sigma$ the {\em Thurston linear transformation} 
$$
f_\Sigma: \R^\Sigma \longrightarrow \R^{\Sigma}
$$
defined as 
$$
f_\Sigma(\sigma) = \sum_{\sigma' \subseteq f^{-1}(\sigma)} \frac{1}{\deg(f: \sigma' \rightarrow \sigma)}[\sigma']_\Sigma,
$$
where $\sigma\in\Sigma$, and $[\sigma']_\Sigma$ denotes the element of $\Sigma$ isotopic to $\sigma'$, if it exists.
The curve system is called {\em irrreducible} if $f_\Sigma$ is irreducible as a linear transformation.
It is said to be a {\em Thurston obstruction} if the spectral radius $\lambda(f_\Sigma) \geq 1$.

We refer the readers to \cite{DH93} for the definition of hyperbolic orbifold, but this is the typical case as any map with more than four postcritical points has hyperbolic orbifold.
Thurston's topological characterization of rational maps says
\begin{theorem}\cite[Theorem 1]{DH93}\label{thm:to}
Let $f$ be a Thurston map which has hyperbolic orbifold. Then $f$ is equivalent to a rational map if and only if $f$ has no Thurston's obstruction.
Moreover, if $f$ is equivalent to a rational map, the map is unique up to M\"obius conjugacy.
\end{theorem}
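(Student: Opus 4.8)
The statement is the Douady--Hubbard realization theorem, and the plan is to prove it by Thurston's iteration on Teichm\"uller space. Write $P=P(f)$ for the postcritical set and assume, as one may since the orbifold is hyperbolic, that $|P|\ge 3$. Let $\mathcal{T}=\mathcal{T}(S^2,P)$ be the Teichm\"uller space of complex structures on the marked sphere $(S^2,P)$, a finite-dimensional complex manifold carrying the Teichm\"uller (equivalently, by Royden, the Kobayashi) metric. The first step is to construct the \emph{Thurston pullback map} $\sigma_f\colon\mathcal{T}\to\mathcal{T}$: given a complex structure $\tau$, pull it back through $f$ to a structure on $(S^2,f^{-1}(P))$ and restrict the marking to $P\subseteq f^{-1}(P)$. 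The key dictionary is that fixed points of $\sigma_f$ correspond exactly to complex structures in which $f$ becomes holomorphic, i.e.\ to rational maps equivalent to $f$. Thus the theorem reduces to deciding when $\sigma_f$ has a fixed point, and how many.

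Second, I would establish the contraction properties of $\sigma_f$. It factors as $\sigma_f=X\circ Y$, where $Y\colon\mathcal{T}(S^2,P)\to\mathcal{T}(S^2,f^{-1}(P))$ is the pullback by the covering $f$, an isometry for the Teichm\"uller metrics, and $X\colon\mathcal{T}(S^2,f^{-1}(P))\to\mathcal{T}(S^2,P)$ is the map forgetting the extra marked points. By the Schwarz--Pick lemma for the Kobayashi metric, $X$ is distance non-increasing, and strictly contracting on compact subsets. The hyperbolic-orbifold hypothesis is precisely what rules out the Euclidean (Latt\`es-type) exceptions, where $\sigma_f$ would degenerate to an isometry and rigidity would fail. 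Hence $\sigma_f$ is holomorphic and weakly contracting, strictly so on compacta, and uniqueness in the theorem is immediate: a strict contraction has at most one fixed point, so the realizing rational map, when it exists, is unique up to the M\"obius coordinate change.

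Third comes the dichotomy that identifies the obstruction. For necessity I would show that a rational map satisfies $\lambda(f_\Sigma)<1$ for \emph{every} multicurve $\Sigma$: equipping $\hat{\C}\setminus P$ with its hyperbolic metric and using that $f$ maps $\hat{\C}\setminus f^{-1}(P)$ as a covering into the strictly smaller surface $\hat{\C}\setminus P$, the Schwarz lemma forces each $\sigma'\subseteq f^{-1}(\sigma)$ to be longer than $(1/\deg)$ times $\sigma$, which yields the strict spectral bound. Consequently, if $f$ admits a Thurston obstruction $\Sigma$ with $\lambda(f_\Sigma)\ge 1$, then $f$ is not equivalent to a rational map and $\sigma_f$ has no fixed point. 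For sufficiency I would iterate $\sigma_f$ from an arbitrary basepoint $\tau_0$: if the orbit $\{\sigma_f^n(\tau_0)\}$ stays in a compact set, the strict contraction there forces convergence to the unique fixed point, giving the rational realization.

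The main obstacle is the remaining case: ruling out divergence of the orbit to $\partial\mathcal{T}$ when there is no obstruction. Here I would run the Douady--Hubbard length estimates. If $\sigma_f^n(\tau_0)$ leaves every compact set, then by Mumford compactness some simple closed geodesics shrink; the curves whose hyperbolic lengths decay geometrically assemble, after passing to the stable part, into an $f$-stable multicurve $\Sigma$, and the precise comparison between $\ell_{\tau_{n}}$ and $\ell_{\tau_{n+1}}$ shows that the decay rate is governed by the leading eigenvalue of $f_\Sigma$, forcing $\lambda(f_\Sigma)\ge 1$. This contradicts the no-obstruction hypothesis, so the orbit must remain compact and $f$ is rational. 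Combining the three directions --- fixed point $\Leftrightarrow$ rational, obstruction $\Rightarrow$ no fixed point, and no obstruction $\Rightarrow$ bounded orbit $\Rightarrow$ fixed point --- together with uniqueness from strict contraction proves the theorem; the quantitative length and eigenvalue comparison of this last paragraph is the technical heart.
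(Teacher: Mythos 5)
The paper offers no proof of this statement: it is Thurston's characterization theorem, quoted as background directly from Douady--Hubbard \cite{DH93}, so the only thing to compare your argument against is the original proof --- and your outline does follow that proof faithfully in its overall architecture: the pullback map $\sigma_f$ on $\mathcal{T}(S^2,P)$, the dictionary between fixed points of $\sigma_f$ and rational realizations, weak contraction via Schwarz--Pick with the hyperbolic-orbifold hypothesis excluding the Latt\`es-type isometric degenerations (whence uniqueness), and the hard direction in which a divergent orbit produces short geodesics assembling into an $f$-stable multicurve with $\lambda(f_\Sigma)\ge 1$. You also correctly locate the technical heart in that last step.

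There is, however, a genuine gap in your necessity direction (rational $\Rightarrow$ no obstruction). You claim the Schwarz lemma forces each component $\sigma'\subseteq f^{-1}(\sigma)$ to be \emph{longer} than $(1/\deg)\,\ell(\sigma)$. The covering-plus-inclusion argument gives the opposite flavor of bound: $f\colon\hat{\C}\setminus f^{-1}(P)\to\hat{\C}\setminus P$ is a local isometry of hyperbolic metrics, so $\ell_{\hat{\C}\setminus f^{-1}(P)}(\sigma')=d'\,\ell_{\hat{\C}\setminus P}(\sigma)$ with $d'=\deg(f\colon\sigma'\to\sigma)$, and since $\hat{\C}\setminus f^{-1}(P)$ sits inside $\hat{\C}\setminus P$ the inclusion contracts, giving the \emph{upper} bound $\ell_{\hat{\C}\setminus P}(\sigma')\le d'\,\ell_{\hat{\C}\setminus P}(\sigma)$; the lower bound you assert does not follow. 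Worse, no per-curve length comparison of either direction can control $\lambda(f_\Sigma)$, because the Thurston matrix \emph{sums} the weights $1/d'$ over all preimage components: for instance a curve with two degree-one preimages isotopic to itself has $\lambda=2$ while satisfying every per-curve inequality trivially. The standard repair is to run the estimate on moduli of annuli (extremal length): choose disjoint annuli $A_\sigma$ around the curves of $\Sigma$; each component of $f^{-1}(A_\sigma)$ isotopic to $\gamma\in\Sigma$ is an annulus of modulus $\mathrm{mod}(A_\sigma)/d'$, these preimage annuli are pairwise disjoint, and Gr\"otzsch superadditivity yields, at the fixed point of $\sigma_f$, a componentwise inequality of the form $m\ge f_\Sigma\, m$ for the vector $m$ of maximal moduli. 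Pairing with a Perron--Frobenius left eigenvector forces $\lambda(f_\Sigma)\le 1$, and a strictness refinement (as in \cite{DH93}) gives $\lambda(f_\Sigma)<1$. With that step replaced, your outline is the Douady--Hubbard proof that the paper's citation points to.
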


An arc $\lambda$ in $\mathbb{S}^2\setminus P(f)$ is an embedding of $(0,1)$ in $\mathbb{S}^2\setminus P(f)$ with end-points in $P(f)$.
It is said to be {\em essential} if it is not contractible in $\mathbb{S}^2$ fixing the two end-points.
A set of pairwise non-isotopic essential arcs $\Lambda$ is called an {\em arc system}.
The Thurston linear transformation $f_\Lambda$ is defined in a similar way, and we say that it is irreducible if $f_\Lambda$ is irreducible as a linear transformation.

For a curve system $\Sigma$ (respectively, an arc system $\Lambda$), we set $\widetilde{\Sigma}$ (respectively, $\widetilde{\Lambda}$) as the union of those components of $f^{-1}(\Sigma)$ (respectively, $f^{-1}(\Lambda)$) which are isotopic relative to $P(f)$ to elements of $\Sigma$ (respectively, $\Lambda$).
We will use $\Sigma \cdot \Lambda$ to denote the minimal intersection number between them.
We will be using the following theorem excerpted and paraphrased from \cite[Theorem 3.2]{PT98}.
\begin{theorem}\cite[Theorem 3.2]{PT98}\label{thm:pt}
Let $f$ be a Thurston map, $\Sigma$ an irreducible Thurston obstruction in $(\mathbb{S}^2, P(f))$, and $\Lambda$ an irreducible arc system in $(\mathbb{S}^2, P(f))$.
Assume that $\Sigma$ intersect $\Lambda$ minimally,
then either
\begin{itemize}
\item $\Sigma \cdot \Lambda = 0$; or
\item $\Sigma \cdot \Lambda \neq 0$ and for each $\lambda \in \Lambda$, there is exactly one connected component $\lambda'$ of $f^{-1}(\lambda)$ such that $\lambda' \cap \widetilde{\Sigma} \neq \emptyset$. Moreover, the arc $\lambda'$ is the unique component of $f^{-1}(\lambda)$ which is isotopic to an element of $\Lambda$.
\end{itemize}
\end{theorem}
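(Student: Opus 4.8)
The statement is pure surface topology combined with Perron--Frobenius theory, so the plan is to argue entirely with geometric intersection numbers and the transition matrix $f_\Sigma$; Theorem~\ref{thm:to} plays no role. Throughout I realize $\Sigma$ and $\Lambda$ in minimal position on $\mathbb{S}^2\setminus P(f)$. The first ingredient is that \emph{minimal position lifts}: since the interiors of the curves and arcs avoid $P(f)$, hence the critical values, $f$ is an honest covering over them, so a bigon between components of $f^{-1}(\Sigma)$ and $f^{-1}(\Lambda)$ would project to a bigon downstairs; thus $f^{-1}(\Sigma)$ and $f^{-1}(\Lambda)$ are again in minimal position. The second ingredient is special to arcs: because $\operatorname{int}(\lambda_j)$ misses the critical values, $f^{-1}(\lambda_j)$ has exactly $d$ components over the interior, each mapping by degree one onto $\lambda_j$. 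Projecting crossings through such a sheet $\lambda'$ gives the clean count
\[
i\bigl(f^{-1}(\Sigma),\lambda'\bigr)=i(\Sigma,\lambda_j)
\]
for every sheet $\lambda'$ of $f^{-1}(\lambda_j)$. Writing $s_j:=i(\Sigma,\lambda_j)$, these $s_j$ crossings of $\lambda'$ with $f^{-1}(\Sigma)$ split into crossings with the essential preimages $\widetilde{\Sigma}$ and crossings with the remaining (peripheral or non-$\Sigma$) preimage curves; the sheet $\lambda'$ meets $\widetilde{\Sigma}$ precisely when at least one crossing is of the first kind.

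Next I would bring in the obstruction. Since $\Sigma$ is irreducible and a Thurston obstruction, $M:=f_\Sigma$ is an irreducible nonnegative matrix with $\lambda(f_\Sigma)\ge 1$, so Perron--Frobenius furnishes a strictly positive left eigenvector $\mathbf{v}$ with $\mathbf{v}^\top M=\lambda(f_\Sigma)\,\mathbf{v}^\top$. Reading off the $i$-th coordinate and expanding $M_{i'i}$ gives the weighted identity
\[
\sum_{i'} v_{i'}\!\!\sum_{\substack{\sigma'\subseteq f^{-1}(\sigma_i)\\ \sigma'\sim\sigma_{i'}}}\frac{1}{\deg(f\colon\sigma'\to\sigma_i)}=\lambda(f_\Sigma)\,v_i\ \ge\ v_i,
\]
which is the only place $\lambda(f_\Sigma)\ge 1$ is used. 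Combining this with the lifted counting—distributing the $\deg(f\colon\sigma'\to\sigma_i)\cdot i(\sigma_i,\lambda_j)$ crossings of a fixed $\sigma'$ with the sheets of $f^{-1}(\lambda_j)$, and weighting each essential curve preimage by the coordinate $v_{[\sigma']}$ of its isotopy class—produces, for each arc $\lambda_j$, a lower bound
\[
\sum_{\lambda'\subseteq f^{-1}(\lambda_j)}\ \sum_{\sigma'\subseteq\widetilde{\Sigma}}\frac{v_{[\sigma']}}{\deg(f\colon\sigma'\to f(\sigma'))}\,i(\sigma',\lambda')\ \ge\ \lambda(f_\Sigma)\,\mathbf{v}^\top\mathbf{a}_j,
\]
where $\mathbf{a}_j=(i(\sigma_i,\lambda_j))_i$. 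The bookkeeping is routine; the content is that the essential crossings cannot be destroyed in the pullback, and in particular that $\Sigma\cdot\Lambda\neq 0$ forces $\mathbf{v}^\top\mathbf{a}_j>0$ for some $j$, so that some sheet genuinely meets $\widetilde{\Sigma}$.

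The crux is the \emph{concentration step}: upgrading these weighted counts to the assertion that, when $\Sigma\cdot\Lambda\neq 0$, all essential crossings of $f^{-1}(\lambda_j)$ with $\widetilde{\Sigma}$ lie on a single sheet. I would argue by contradiction: if two sheets $\lambda',\lambda''$ each crossed $\widetilde{\Sigma}$ essentially, then the component of the pullback arc system carrying the ``extra'' essential crossing could be rerouted to strictly decrease $\Sigma\cdot\Lambda$, contradicting both minimality of the intersection and the irreducibility of $\Lambda$; the eigenvector inequality above is exactly what certifies that the surplus crossing is essential and cannot be isotoped away. Once the intersecting sheet $\lambda'$ is shown to be unique, it is automatically essential (it crosses an essential curve of $\widetilde{\Sigma}$), and applying irreducibility of $\Lambda$ to the preimage arc carrying this crossing identifies $\lambda'$ as the unique component of $f^{-1}(\lambda)$ isotopic into $\Lambda$, which is the ``moreover'' clause. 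I expect this concentration/uniqueness step—converting the weighted inequality into an honest ``exactly one''—to be the main obstacle, precisely because it is where the interaction between the obstruction $\Sigma$ and the arc system $\Lambda$ must be used, rather than either system in isolation.
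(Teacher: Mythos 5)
The first thing to say is that the paper contains no proof of this statement: it is explicitly ``excerpted and paraphrased'' from Pilgrim--Tan \cite[Theorem 3.2]{PT98} and used as a black box (its only role is in the proof of Lemma \ref{lem:pcfc}). So there is no internal argument to compare yours against; your proposal is an attempt to reprove a cited theorem, and it must be judged on its own merits.

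Judged that way, it has a genuine gap, and you have in fact located it yourself: the ``concentration step'' is not a technical obstacle to be smoothed over later --- it \emph{is} the theorem, and your proposal does not prove it. Your preliminary ingredients are correct but standard: preimages of configurations in minimal position are again in minimal position (e.g.\ realize everything as geodesics for a hyperbolic metric on the complement of $P(f)$ and pull the metric back), and each sheet $\lambda'$ of $f^{-1}(\lambda_j)$ maps homeomorphically onto $\lambda_j$, giving $\#\bigl(\lambda'\cap f^{-1}(\Sigma)\bigr)=\#(\lambda_j\cap\Sigma)$. But from there the argument is asserted rather than made. The displayed ``lower bound'' is never derived, and the inequality directions are delicate: minimal position upstairs gives you \emph{equalities} $\#(\sigma'\cap\lambda')=i([\sigma'],[\lambda'])$ together with an \emph{upper} bound (essential crossings are at most total crossings), and it takes genuine work to play this against the Perron--Frobenius inequality $\lambda(f_\Sigma)\ge 1$ so that something is squeezed. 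The contradiction you propose --- that a second sheet meeting $\widetilde{\Sigma}$ would let one ``reroute'' and strictly decrease $\Sigma\cdot\Lambda$ --- has no mechanism behind it: $\Sigma$ and $\Lambda$ already realize the minimal intersection number of their isotopy classes, so no isotopy decreases it, and nothing in your setup converts the existence of a second intersecting sheet upstairs into a bigon or other removable configuration downstairs. Finally, the ``moreover'' clause is a separate claim that your sketch conflates with essentiality: crossing an essential curve of $\widetilde{\Sigma}$ shows at most that $\lambda'$ is essential, not that it is isotopic to an element of $\Lambda$, and ruling out that some \emph{other} component of $f^{-1}(\lambda)$ (possibly disjoint from $\widetilde{\Sigma}$) is isotopic to an element of $\Lambda$ is an additional statement that your argument never addresses. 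In short: right circle of ideas (intersection counting plus Perron--Frobenius, which is indeed the spirit of \cite{PT98}), but the decisive uniqueness claims are unproven, so the proposal does not establish the statement; citing \cite{PT98}, as the paper does, remains the honest course.
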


With this preparation, we are ready to show 
\begin{lem}\label{lem:pcfc}
Let $F: \mathcal{G} \longrightarrow \mathcal{G}$ be a degree $d$ simplicial branched covering satisfying the conditions in Theorem \ref{thm:nto}.
Let $\widetilde{F}$ be the corresponding Thurston map on $\mathbb{S}^2 \cong \hat{\C}$.
Then $\widetilde{F}$ is equivalent to a rational map $f$.

Moreover, the induced dynamics on the Fatou graph of $f$ is conjugate to $F: \mathcal{G} \longrightarrow \mathcal{G}$.
\end{lem}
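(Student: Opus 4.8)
The plan is to exhibit $\widetilde{F}$ as a Thurston map with hyperbolic orbifold and then apply Thurston's characterization (Theorem~\ref{thm:to}) by ruling out every Thurston obstruction. First I would check post-critical finiteness: the critical points of $\widetilde{F}$ lie in the vertex set of $\mathcal{G}$, and since every edge is eventually mapped to $E_0=[a,b]$, every vertex is eventually mapped into the finite cycle through $a$ and $b$. Hence the forward orbit of each of the finitely many critical points is finite, so $P:=P(\widetilde{F})$ is finite and contained in the vertex set. Because $a,b$ are periodic critical points they contribute orbifold points of order $\infty$ (punctures), which together with the remaining post-critical points makes the orbifold hyperbolic; the few degenerate low-$|P|$ signatures fail to yield a gasket and are excluded by the standing hypotheses coming from Theorem~\ref{thm:dfg}. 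It then remains to show $\widetilde{F}$ carries no Thurston obstruction.

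Let $\Sigma$ be an irreducible Thurston obstruction and take the fixed arc as the arc system $\Lambda=\{E_0\}$; this arc is essential since $a\neq b$, and $\Lambda$ is irreducible since $E_0$ is a component of $\widetilde{F}^{-1}(E_0)$ isotopic to itself. I would first rule out the alternative $\Sigma\cdot E_0=0$. Writing $A_n=\widetilde{F}^{-n}(E_0)$ and using $\widetilde{F}^{-1}(X)\cap\widetilde{F}^{-1}(Y)=\widetilde{F}^{-1}(X\cap Y)$, disjointness of $\Sigma$ from $E_0$ forces $\widetilde{F}^{-1}(\Sigma)$ to be disjoint from $A_1$. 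Since $\Sigma$ is irreducible, $f_\Sigma$ is irreducible, so the associated digraph is strongly connected and every $\tau\in\Sigma$ is isotopic to some essential component of $\widetilde{F}^{-1}(\sigma)$ for some $\sigma\in\Sigma$; this component is disjoint from $A_1$, whence $\Sigma\cdot A_1=0$, and inductively $\Sigma\cdot A_n=0$ for all $n$. Because $\mathcal{G}$ is backward invariant and every edge is eventually mapped to $E_0$, the nested sets $A_n$ exhaust $\mathcal{G}$; and since every post-critical point is a vertex, each open face of $\mathcal{G}$ contains no point of $P$, so any essential curve must cross $\mathcal{G}$ essentially. Thus $i(\sigma,A_N)>0$ for $N$ large, contradicting $\Sigma\cdot A_N=0$. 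Hence every irreducible obstruction satisfies $\Sigma\cdot E_0\neq 0$.

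For the remaining alternative $\Sigma\cdot E_0\neq 0$, I would invoke Pilgrim--Tan (Theorem~\ref{thm:pt}) with $\Lambda=\{E_0\}$: there is exactly one component $\lambda'$ of $\widetilde{F}^{-1}(E_0)$ meeting $\widetilde{\Sigma}$, and $\lambda'$ is the unique preimage isotopic to $E_0$. As $E_0$ is fixed it is itself such a preimage, forcing $\lambda'=E_0$, so among all edges mapping to $E_0$ only $E_0$ meets $\widetilde{\Sigma}$. Here the hypothesis that $\mathcal{G}-\Int(E_0)$ is connected---equivalently $\mathcal{G}^1-\Int(E_0)$ is connected, by Lemma~\ref{lem:fc}---enters: were a curve able to cross $E_0$ while being separated from the remainder of $\mathcal{G}$ it would be the Levy cycle of the Remark, but connectivity forces $\Sigma$ to cross further edges, so the Pilgrim--Tan normal form bounds $i(\widetilde{\Sigma},E_0)$ strictly below $\deg(\widetilde{F}|_{E_0})\cdot i(\Sigma,E_0)$, giving $\lambda(f_\Sigma)<1$ and contradicting that $\Sigma$ is an obstruction. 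With no obstruction, Theorem~\ref{thm:to} produces a rational map $f$ equivalent to $\widetilde{F}$, unique up to M\"obius conjugacy.

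Finally, to identify the dynamics I would transport the invariant graph $\mathcal{G}$ through the equivalence. Since $a,b$ and all their vertex-preimages are (super)attracting periodic points or preperiodic critical points landing in the superattracting cycle, $f$ is a hyperbolic post-critically finite map whose Fatou set is the grand orbit of the superattracting basins of $a$ and $b$; each Fatou component is a Jordan domain centered at a vertex of $\mathcal{G}$, and two components share a boundary point exactly when the corresponding vertices are joined by an edge. The induced map on Fatou components is therefore the vertex map of $F$, so the contact (Fatou) graph of $f$ is $\mathcal{G}$ with dynamics conjugate to $F\colon\mathcal{G}\to\mathcal{G}$, as claimed. I expect the main obstacle to be exactly the intersection-number bookkeeping in the case $\Sigma\cdot E_0\neq 0$: making precise how connectivity of $\mathcal{G}-\Int(E_0)$ combines with the Pilgrim--Tan normal form to force $\lambda(f_\Sigma)<1$ is the delicate step, whereas the disjoint case, post-critical finiteness, and the final combinatorial identification are comparatively routine.
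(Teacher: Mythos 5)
Your framework coincides with the paper's: verify hyperbolic orbifold, apply Thurston's characterization (Theorem~\ref{thm:to}), and play an irreducible obstruction $\Sigma$ against the arc system $\Lambda=\{E_0\}$ via Pilgrim--Tan (Theorem~\ref{thm:pt}). Your treatment of the disjoint case $\Sigma\cdot E_0=0$ is essentially the paper's argument and is fine: pull back, use irreducibility, and note that for large $N$ the set $\widetilde{F}^{-N}(E_0)$ contains a connected graph through all post-critical points, so curves isotoped off it are inessential.

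The genuine gap is in the crossing case $\Sigma\cdot E_0\neq 0$, precisely at the step you yourself flag as delicate. After correctly extracting from Pilgrim--Tan that $E_0$ is the only component of $\widetilde{F}^{-1}(E_0)$ meeting $\widetilde{\Sigma}$, you try to conclude via an intersection-number inequality ``$i(\widetilde{\Sigma},E_0)<\deg(\widetilde{F}|_{E_0})\cdot i(\Sigma,E_0)$, hence $\lambda(f_\Sigma)<1$.'' This implication is not valid: the Thurston linear transformation is defined by mapping degrees of curve preimages, and its spectral radius is not controlled by intersection numbers with a single invariant arc. Indeed, in the obstructed situation excluded by the hypotheses (when $\mathcal{G}-\Int(E_0)$ is disconnected, as in the Remark after Theorem~\ref{thm:nto}), the Levy cycle crosses $E_0$ exactly twice and its distinguished preimage again crosses $E_0$ exactly twice --- nothing decreases, and $\lambda(f_\Sigma)=1$; so no inequality of the kind you assert can hold without already invoking connectivity in an essential, and so far unexplained, way. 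The paper closes this case without any spectral estimate: apply Theorem~\ref{thm:pt} to every iterate $f^n$ to conclude that $\widetilde{\Sigma}_n$ (the preimage components isotopic into $\Sigma$) is disjoint from $f^{-n}(E_0)-E_0$; then use connectivity of $\mathcal{G}-\Int(E_0)$ to choose a finite subgraph $\mathcal{H}\subseteq\mathcal{G}$ such that $\mathcal{H}-E_0$ is connected and contains $P(f)$, and $N$ with $\mathcal{H}\subseteq f^{-N}(E_0)$ (every edge eventually maps to $E_0$). By irreducibility each curve of $\Sigma$ is isotopic to a component of $\widetilde{\Sigma}_N$, hence can be isotoped off the connected graph $\mathcal{H}-E_0$ containing all post-critical points, and is therefore inessential --- a contradiction. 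Your proof needs this iteration-plus-connected-subgraph step (or an equivalent substitute) to be complete; the final ``moreover'' identification of the Fatou graph is brief in your write-up, but the paper treats it equally briefly as a standard pull-back argument, so I would not count that against you.
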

\begin{proof}
It is easy to verify that $f$ has hyperbolic obifold.
Thus, by Theorem \ref{thm:to}, it is suffices to show there are no Thurston obstructions.

Suppose for constradiction that there is a Thurston obstruction $\Sigma$.
After passing to a subset, if necessary, we may assume that $\Sigma$ is irreducible.

Let $\Lambda = \{E_0\}$. Then $\Lambda$ is an irreducible arc system.
Istoping $\Sigma$, we may assume that $\Sigma$ intersects $\Lambda$ minimally.
Let $\widetilde{\Sigma}_n$ be the union of those components of $f^{-n}(\Sigma)$ which are isotopic to elements of $\Sigma$.

We claim that $\widetilde{\Sigma}_n$ does not intersect $f^{-n}(E_0) - E_0$.
Indeed, applying Theorem \ref{thm:pt} to $f^n$, we are led to the following two cases.
In the first case, $\Sigma \cap E_0 = \emptyset$, so $f^{-n}(\Sigma) \cap f^{-n}(E_0) = \emptyset$ and the claim follows.
In the second case, since $f(E_0) = E_0$, we conclude that $E_0$ is the unique component of $f^{-n}(E_0)$ that is isotopic to $E_0$.
Thus, the only component of $f^{-n}(E_0)$ intersecting $\widetilde{\Sigma}_n$ is $E_0$, and the claim follows.

Since $\mathcal{G}- E_0$ is connected, there exists a finite graph $\mathcal{H} \subseteq \mathcal{G}$ so that $\mathcal{H} - E_0$ is a connected graph containing the post-critical set.
Since every edge is eventually mapped to $E_0$, there exists $N$ so that $\mathcal{H} \subseteq f^{-N}(E_0)$.
Therefore, by the claim, $\widetilde{\Sigma}_N$ does not intersect $\mathcal{H} - E_0$.
Thus, $\Sigma$ does not intersect $\mathcal{H} - E_0$.
This forces $\Sigma$ to be empty, which is a contradiction.

The moreover part follows directly from a standard pull-back argument.
\end{proof}

We are now ready to prove Theorem \ref{thm:nto}.
\begin{proof}[Proof of Theorem \ref{thm:nto}]
By Lemma \ref{lem:pcfc}, there exists a \pcf rational map $f$ whose induced dynamics on the Fatou graph is conjugate to $F: \mathcal{G} \longrightarrow \mathcal{G}$.

To get a fat gasket from the \pcf rational map $f$, we can perform a standard {\em simple pinching} deformation (see \cite{CT18} or \cite{HT04}) to create a double parabolic fixed point at the common boundary of the two fixed Fatou components of $f$.
Let us denote this new map $\tilde{f}$.
Note that $\tilde{f}$ is geometrically finite, and $\tilde{f}: J(\tilde{f}) \longrightarrow J(\tilde{f})$ is topologically conjugate to $f: J(f) \longrightarrow J(f)$.

By the local Fatou coordinate at the double parabolic fixed point, it is easy to see the two fixed Fatou of $\tilde{f}$ are tangent to each other.
By inductively pulling back this tangent point, we see that $J(\tilde{f})$ is a fat gasket. 
Since $\tilde{f}$ and $f$ have the same Julia dynamics, the induced dynamics of $\tilde{f}$ on the Fatou graph is conjugate to $F: \mathcal{G} \longrightarrow \mathcal{G}$.
\end{proof}

\subsection{The quadratic case}
We will now consider the quadratic case and prove Theorem \ref{thm:thmD}.
\begin{proof}[Proof of Theorem \ref{thm:thmD}]
Let $\mathcal{G}$ be the Fatou graph.
By Theorem \ref{thm:dfg}, there exists a unique fixed edge $E_0=[a,b] \subseteq \mathcal{G}$.
Note that $a, b$ are not fixed, as any quadratic rational map with two fixed Fatou component has its Julia set homeomorphic to a circle.
Thus $a, b$ form a periodic 2-cycle under $f_*$.

Note that the Fatou components $U_a \cup U_b$ contain exactly one critical point.
Let $c \notin U_a \cup U_b$ be the other critical point.
We have two cases:

Case 1: If $c$ is on the Julia set, then it is easy to see that the Fatou graph $\mathcal{G}$ is a tree, as $c$ is not on the boundary of a Fatou component.

Case 2: If $c$ is contained in the Fatou set, then $f$ is a geometrically finite rational map.
So there exists a \pcf rational map $g$ with topologically conjugate dynamics on the Julia sets (see \cite[Theorem 1.3]{CT18}).
Note that $g$ is hyperbolic as it is \pcf and there are no critical point on the Julia set.
Since $f$ has a fat gasket Julia set, it has a parabolic fixed point at $\partial U_a \cap \partial U_b$. 
Thus, $f$ is a root of the hyperbolic component containing $g$.
Since every edge is eventually mapped to $E_0$ by Theorem \ref{thm:dfg}, $c$ is eventually mapped to $U_a \cup U_b$.
Therefore, $f$ is a root of a captured type hyperbolic component with an attracting cycle of period $2$.
This proves the first part of Theorem \ref{thm:thmD}.

We now prove the moreover part.
Suppose $f$ is in Case 1 and is geometrically finite.
Let $g$ be the corresponding \pcf rational map.
We can connect the 2 points in the critical 2-cycle of $g$ using two internal rays, and denote this by $I$.
We can choose a small neighborhood $U$ of $I$, so that $\gamma = \partial U$ is a simple closed curve, $g^{-1}(\gamma)$ is isotopic to $\gamma$ relative to the post-critical points of $g$, and $g: g^{-1}(\gamma) \longrightarrow \gamma$ is a two-to-one map.
Thus, $\gamma$ is an equator, and $g$ is a mating of the Basilica with a dendrite polynomial.
Therefore, $f$ is a mating of the fat Basilica with a dendrite polynomial.

Let $g$ be a quadratic \pcf rational map of captured type with a critical 2-cycle.
Let $U_a, U_b$ be the Fatou components for the critical 2-cycle.
Note that the internal fixed rays $\gamma_a, \gamma_b$ of $U_a$ and $U_b$ must land at a common fixed point.
Indeed, otherwise, they land at a 2-cycle $x_a, x_b$, but a quadratic rational map can have at most one 2-cycle, which is a contradiction.
Thus $\partial U_a$ intersects $\partial U_b$.
Let $E_0$ be the union of $\gamma_a$ and $\gamma_b$ together with their common landing point.
Inductively, it is easy to see that 
$\mathcal{G}_n := g^{-n}(E_0)$
is a connected simple graph.
Let $\mathcal{G} = \bigcup_{n=0}^{\infty} \mathcal{G}_n$.
Then it is not hard to see that $g:\mathcal{G}\longrightarrow \mathcal{G}$ satisfies the assumptions in Theorem \ref{thm:nto}.
Indeed, the condition that $\mathcal{G} - \Int(E_0)$ is connected follows as $g$ has no Levy cycle.
Thus, similar to the proof of Theorem \ref{thm:nto}, 
if $f$ is a root of the hyperbolic component containing $g$,
then $f$ has fat gasket.
\end{proof}

\section{Captured type in \texorpdfstring{$\Per_2(0)$}{Per2(0)}}\label{sec:CT}
To understand the moduli space of quadratic rational maps, in \cite{Mil93}, Milnor defines one complex dimensional slices, called $\Per_n(0)$ curves.
A map $f$ is in $\Per_n(0)$ if it has a super-attracting periodic $n$-cycle. 
By Theorem \ref{thm:thmD}, we are interested in maps of captured type in $\Per_2(0)$ (see Figure \ref{fig:Per}).
This space has been studied extensively in the literature.
In particular, it can be interpreted as a partial mating of the filled Julia set of a Basilica and the Mandelbrot set (see\cite{Wit88, Luo95, Dud11}).

In this section, we will study the combinatorics of Fatou graph $\mathcal{G}$ of a \pcf rational map of captured type in $\Per_2(0)$.

\subsection{The canonical finite core and critical loop}
Let $f$ be the \pcf map of a captured type hyperbolic component in $\Per_2(0)$.
Let $\mathcal{G}$ be the Fatou graph of $f$, and $E_0$ be the unique fixed edge of $\mathcal{G}$.

For simplicity, we shall regard $\mathcal{G}$ as a graph embedded in the dynamical plane of $f$, where a vertex is the center of the corresponding Fatou component, and an edge is a union of two internal rays together with their common landing point.
With this identification, we shall also denote the induced map on $\mathcal{G}$ simply by $f: \mathcal{G} \longrightarrow \mathcal{G}$.

\begin{figure}
    \centering
    \includegraphics[width=0.8\textwidth]{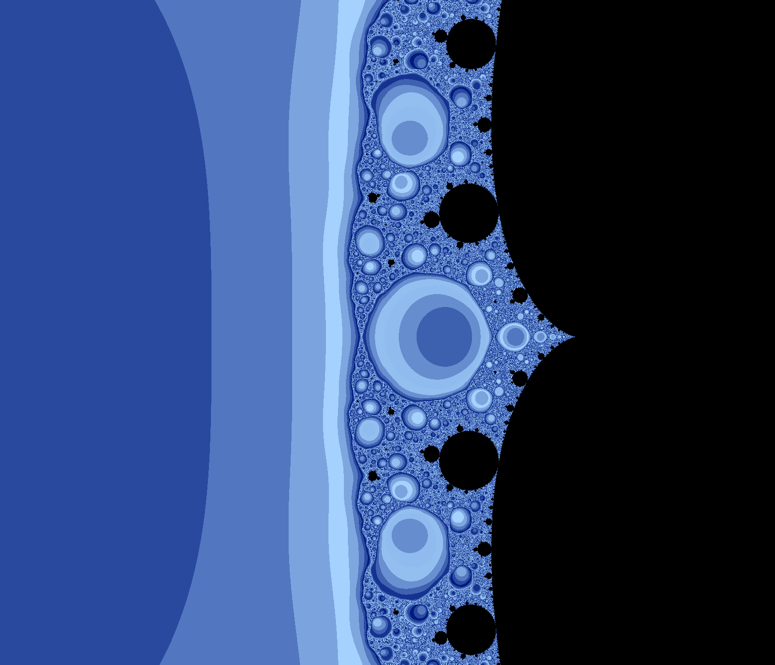}
    \caption{The bifurcation locus of $\Per_2(0)$.}
    \label{fig:Per}
\end{figure}

Let $c \in \mathcal{G}$ be the vertex associated to the strictly pre-periodic critical point.
Let $q$ be the pre-period of $c$. Then $f^q(c) \in \partial E_0$.
We define 
$$
\mathcal{G}^0:= \bigcup_{i=0}^{q-1}f^{-i}(E_0), \text{ and } \; 
\mathcal{G}^1 := f^{-1}(\mathcal{G}^0).
$$
By induction, it is easy to that $\bigcup_{i=0}^{k}f^{-i}(E_0)$ is a tree for all $k \leq q-1$.
Thus, we have
\begin{lem}
The graph $\mathcal{G}^0$ is a tree containing the post-critical set of $\mathcal{G}$.
\end{lem}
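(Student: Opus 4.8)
The plan is to prove the two assertions --- that $\mathcal{G}^0$ is a tree and that it contains $P(f)$ --- separately, the first by induction and the second by a direct orbit count. Write $\{a,b\}$ for the super-attracting $2$-cycle with centers $a,b$ and $f(a)=b$, $f(b)=a$, let $c_0=a$ be the critical point lying in this cycle and $c$ the strictly pre-periodic critical point; the two critical values are then $f(c_0)=b$ and $f(c)$. Put $T_k:=\bigcup_{i=0}^{k}f^{-i}(E_0)$, so that $\mathcal{G}^0=T_{q-1}$. Because $E_0$ is fixed we have $E_0\subseteq f^{-1}(E_0)$, and monotonicity of $f^{-1}$ gives $E_0\subseteq f^{-1}(T_k)$; hence $T_{k+1}=E_0\cup f^{-1}(T_k)=f^{-1}(T_k)$, which is the identity that feeds the induction.

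For the tree statement I would show by induction on $k$ that $T_k$ is a tree for $0\le k\le q-1$. The base case $T_0=E_0$ is a single edge, whose complement in $\hat\C$ is one open disk. For the step, assume $T_k$ is a tree with $k\le q-2$, so $\hat\C\setminus T_k$ is a single open disk $D$. The key point is that $D$ contains \emph{exactly one} critical value: the value $b=f(c_0)$ lies on $E_0\subseteq T_k$ and so is not in the open face $D$, while $f(c)\notin T_k$ --- the first iterate of $c$ entering $E_0$ is $f^q(c)\in\{a,b\}$, so $f(c)\in f^{-(q-1)}(E_0)$ but $f(c)\notin f^{-i}(E_0)$ for $i\le k\le q-2$ --- whence $f(c)\in D$ and is the only critical value there. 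Thus $\widetilde{F}$ restricts to a proper degree-$2$ branched covering $f^{-1}(D)\to D$ with a single branch point $c$ (the other critical point $c_0$ maps to $b\in T_k$ and so lies on $T_{k+1}=f^{-1}(T_k)$, not in the face). A proper degree-$2$ cover of a disk branched over one interior point is connected and simply connected, i.e.\ a single disk, so $T_{k+1}=\hat\C\setminus f^{-1}(D)$ has exactly one complementary face and that face is simply connected. As in the proof of Lemma \ref{lem:fc}, a single simply connected face forces $T_{k+1}$ to be connected, and a connected plane graph with one face is a tree. This completes the induction and shows $\mathcal{G}^0=T_{q-1}$ is a tree.

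For the post-critical containment I would track the two critical orbits. The forward orbit of $c_0$ is the cycle $\{a,b\}$, and the orbit of $c$ passes through $f(c),\dots,f^{q-1}(c)$ before entering $\{a,b\}$ at step $q$, so $P(f)=\{a,b\}\cup\{f^n(c):1\le n\le q-1\}$. Now $a,b\in E_0\subseteq\mathcal{G}^0$; and for $1\le n\le q-1$ the relation $f^{q-n}(f^n(c))=f^q(c)\in E_0$ gives $f^n(c)\in f^{-(q-n)}(E_0)\subseteq\mathcal{G}^0$ since $1\le q-n\le q-1$. Hence $P(f)\subseteq\mathcal{G}^0$.

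The step I expect to be the crux is the inductive passage: identifying that the unique face $D$ of $T_k$ contains precisely one critical value and then pulling back to see that $f^{-1}(D)$ is a disk. Once that is in place, the connectivity of $T_{k+1}$ is the standard ``all faces simply connected'' mechanism already used for Lemma \ref{lem:fc} (equivalently, the complement of a simply connected domain in $S^2$ is connected, by Alexander duality), and the post-critical containment is pure bookkeeping on the orbit of the pre-periodic critical point.
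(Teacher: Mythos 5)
Your proof is correct and takes essentially the same approach as the paper: the paper's entire proof is the assertion that, by induction, $\bigcup_{i=0}^{k}f^{-i}(E_0)$ is a tree for all $k\leq q-1$. Your inductive step --- the unique face $D$ of $T_k$ contains exactly one critical value, namely $f(c)$, so $f^{-1}(D)$ is a single disk by Riemann--Hurwitz and $T_{k+1}$ is a connected plane graph with one face --- supplies precisely the details the paper declares easy, and the orbit bookkeeping for the post-critical containment is likewise what the paper leaves implicit.
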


Thus, $f: \mathcal{G}^1 \longrightarrow \mathcal{G}^0$ is a finite core of $f: \mathcal{G} \longrightarrow \mathcal{G}$ as introduced in \S \ref{subsec:fc}.
We call this the {\em canonical core} for $f$.

By Lemma \ref{lem:fc}, $\mathcal{G}^1 - \Int(E_0)$ is connected.
Thus, $\mathcal{G}^1$ has at least one simple closed curve that contains $E_0$.
Since $\mathcal{G}^0$ is a tree, it is easy to see that $\mathcal{G}^1$ contains exactly one simple closed curve $\mathcal{C}$.
We call this loop $\mathcal{C}$ the {\em critical loop} for $f$.
The length of the critical loop is an even number $2l$ for some $l \geq 2$.
We call $l$ the {\em critical distance} for $f$.
Since $\mathcal{G}^0$ is a tree, it is easy to show that
\begin{lem}
Let $\mathcal{C}$ be the critical loop. Then $\mathcal{C}$ contains $E_0$ and the critical vertices of $\mathcal{G}$.
Moreover, $f(\mathcal{C})$ is simple path of length $l$, and $f$ maps each of the two components of $\hat{\C} - \mathcal{C}$ homeomorphically to $\hat \C - f(\mathcal{C})$.
\end{lem}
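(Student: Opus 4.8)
The plan is to exploit that $\mathcal{G}^0$ is a tree together with the fact that $f\colon\mathcal{G}^1\to\mathcal{G}^0$ is a degree $2$ branched covering whose only ramification occurs at the two critical vertices. First I would pin down those vertices: since $f$ is quadratic and of captured type, one critical point is the super-attracting point at $a$ (so $f(a)=b$), and the other is the free pre-periodic critical point, which is the vertex $c$. Their critical values $b$ and $f(c)$ are distinct, for if they coincided the common value would have two ramified preimages, forcing $\deg f\geq 4$. Because $\mathcal{G}^0$ is a tree, its complement $\hat{\C}-\mathcal{G}^0$ is connected and simply connected (a single open disk) $F_0$, which contains no critical value; hence $f^{-1}(F_0)$ is a disjoint union of two disks, so $\mathcal{G}^1$ has exactly two complementary faces, consistent with $\mathcal{G}^1$ having a unique simple closed curve $\mathcal{C}$.

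The key step is to identify $\mathcal{C}$ concretely. Let $P\subseteq\mathcal{G}^0$ be the unique arc in the tree joining the two critical values $b$ and $f(c)$. I would show that $f^{-1}(P)$ is a single simple closed curve through both critical vertices $a$ and $c$: over the interior of $P$ the cover is trivial, giving two disjoint arcs, while over each endpoint the unique preimage is the corresponding critical point, so the two arcs close up into one circle passing through $a$ and $c$. Since $f^{-1}(P)\subseteq\mathcal{G}^1$ is a simple closed curve and $\mathcal{G}^1$ contains only one, we conclude $\mathcal{C}=f^{-1}(P)$. This immediately yields $f(\mathcal{C})=P$, a simple path, and since each edge of $P$ has exactly two preimage edges in $\mathcal{C}$, the length of $P$ is half that of $\mathcal{C}$, namely $l$. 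That $E_0\subseteq\mathcal{C}$ follows separately from Lemma~\ref{lem:fc}: as $\mathcal{G}^1-\Int(E_0)$ is connected, $E_0$ is not a bridge, and in a connected graph with a single independent cycle the non-bridge edges are precisely the edges of that cycle.

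Finally, for the covering statement I would use that $\mathcal{C}$ is a Jordan curve, so $\hat{\C}-\mathcal{C}$ has two components $D_1,D_2$, each an open disk, while $\hat{\C}-P$ is a single open disk. Since $\hat{\C}-P$ contains no critical value and $\hat{\C}-\mathcal{C}=f^{-1}(\hat{\C}-P)$, the restriction of $f$ is an unbranched degree $2$ covering of the simply connected set $\hat{\C}-P$; such a covering is trivial, so each connected component $D_i$ maps homeomorphically onto $\hat{\C}-P$.

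The main obstacle I anticipate is the planar and covering-space bookkeeping in the second paragraph: one must verify carefully that $f^{-1}(P)$ is connected and simple (a single circle rather than two disjoint arcs or a figure-eight), which hinges on each critical value having the corresponding critical point as its only preimage, and that this circle is genuinely the cycle $\mathcal{C}$ rather than merely contained in $\mathcal{G}^1$. Once the identification $\mathcal{C}=f^{-1}(P)$ is secured, the length count and the homeomorphism claim are routine consequences of the degree $2$ covering structure.
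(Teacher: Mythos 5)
Your proof is correct. The paper itself gives no argument for this lemma (it is asserted to follow easily from the fact that $\mathcal{G}^0$ is a tree), and your argument—identifying $\mathcal{C}$ with $f^{-1}(P)$, where $P$ is the unique tree-arc in $\mathcal{G}^0$ joining the two critical values, and then invoking the triviality of unbranched degree-$2$ covers of the simply connected sets $\Int P$ and $\hat{\C}-P$—is precisely the natural way to fill in that assertion, yielding all three claims (containment of $E_0$ and the critical vertices, $f(\mathcal{C})=P$ of length $l$, and the homeomorphism statement) in one stroke.
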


Since $f(E_0) = E_0$, we have $E_0 \subseteq \mathcal{C} \cap f(\mathcal{C})$.
Depending on the intersection $\mathcal{C} \cap f(\mathcal{C})$, we classify \pcf map of captured type in $\Per_2(0)$ into three types (see Figure \ref{fig:TypeI}, \ref{fig:TypeIIA}, \ref{fig:TypeIIB}):
\begin{itemize}
    \item[Type I]: $\mathcal{C} \cap f(\mathcal{C}) = E_0$;
    \item[Type IIA]: $E_0 \subsetneq \mathcal{C} \cap f(\mathcal{C}) \subsetneq f(\mathcal{C})$;
    \item[Type IIB]: $\mathcal{C} \cap f(\mathcal{C}) = f(\mathcal{C})$.
\end{itemize}
We shall call a map $f$ is of Type II if it is either Type IIA or Type IIB.

\subsection{Simple closed curves in $\mathcal{G}$}
In this subsection, we prove that the critical loop has the shortest length among all simple closed curves of $\mathcal{G}$.
\begin{prop}\label{prop:sc}
Let $f$ be a \pcf map of captured type in $\Per_2(0)$.
Let $l$ be the critical distance for $f$.
Then any simple closed curve in $\mathcal{G}$ has length $\geq 2l$.
\end{prop}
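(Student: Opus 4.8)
The plan is to argue by induction on the level $k$, where $\mathcal G^k = f^{-1}(\mathcal G^{k-1})$ and $\mathcal G = \bigcup_{k\ge 0}\mathcal G^k$ by Lemma \ref{lem:fc}. Write $c_1, c_2$ for the two critical vertices of $\mathcal G$; recall that both lie on the critical loop $\mathcal C$, that $\mathcal G^0$ is a tree, and that $f(\mathcal C)$ is a simple path of length $l$ joining the two critical values $f(c_1), f(c_2)$. I will prove the following joint statement $\mathcal P(k)$: \emph{(i)} every simple closed curve contained in $\mathcal G^k$ has length $\ge 2l$; and \emph{(ii)} the graph distance in $\mathcal G^k$ between the critical values satisfies $d_{\mathcal G^k}(f(c_1),f(c_2)) \ge l$. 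Since any simple closed curve in $\mathcal G$ is a finite subgraph, it lies in some $\mathcal G^k$, so \emph{(i)} for all $k$ gives the proposition. For the base case $k=0$, statement \emph{(i)} is vacuous because $\mathcal G^0$ is a tree, and \emph{(ii)} holds because $f(\mathcal C) \subseteq \mathcal G^0$ is a length-$l$ path between its endpoints, hence the unique (geodesic) path in the tree $\mathcal G^0$.

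For the inductive step, assume $\mathcal P(k-1)$ and let $\gamma \subseteq \mathcal G^k$ be a simple closed curve. Its image $f(\gamma)$ is a connected subgraph of $\mathcal G^{k-1}$ with at most $|\gamma|$ edges. I split into two cases according to whether $f(\gamma)$ contains a cycle. If $f(\gamma)$ is not a tree, it contains a simple closed curve $\gamma'$, and then $2l \le |\gamma'| \le |f(\gamma)| \le |\gamma|$ by $\mathcal P(k-1)$\emph{(i)}; note this already subsumes the case where $f|_\gamma$ is injective, since there $f(\gamma)$ is itself an embedded cycle.

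The crux is the case where $T := f(\gamma)$ is a tree. Here I would apply Riemann–Hurwitz to the degree-$2$ branched covering $f\colon f^{-1}(T)\to T$ inside $\mathcal G^k$: each critical point in $f^{-1}(T)$ has local degree $2$ and contributes $-1$, so $\chi(f^{-1}(T)) = 2\chi(T) - r = 2 - r$, where $r\in\{0,1,2\}$ counts the critical points lying in $f^{-1}(T)$. Since $\gamma\subseteq f^{-1}(T)$ is a cycle, $f^{-1}(T)$ is not a forest, which forces $r=2$; equivalently both critical values $f(c_1),f(c_2)$ lie in $T$, and $\chi(f^{-1}(T))=0$. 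Thus $f^{-1}(T)$ is the branched double cover of the tree $T$ over its two branch values, which is connected (nontrivial monodromy around each branch point) with a unique cycle: concretely the two lifts of the $T$-geodesic from $f(c_1)$ to $f(c_2)$, meeting exactly at the branch points $c_1,c_2$, with trees attached. Hence $\gamma$ equals this cycle; it passes through $c_1$ and $c_2$ and is the union of two arcs from $c_1$ to $c_2$, each mapping homeomorphically onto the path in $T$ between the critical values. Therefore $|\gamma| = 2\, d_T(f(c_1),f(c_2)) \ge 2\, d_{\mathcal G^{k-1}}(f(c_1),f(c_2)) \ge 2l$ by $\mathcal P(k-1)$\emph{(ii)}, establishing \emph{(i)} at level $k$.

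Finally, to obtain \emph{(ii)} at level $k$, suppose toward a contradiction that some path $Q \subseteq \mathcal G^k$ joins $f(c_1)$ to $f(c_2)$ with $|Q| < l$. Together with the length-$l$ path $f(\mathcal C) \subseteq \mathcal G^0 \subseteq \mathcal G^k$ joining the same endpoints, these are two distinct paths, so their union contains a simple closed curve of length at most $|Q| + l < 2l$ inside $\mathcal G^k$, contradicting part \emph{(i)} at level $k$ just proved. Hence $d_{\mathcal G^k}(f(c_1),f(c_2)) \ge l$, completing the induction. I expect the main obstacle to be precisely the tree case: one must recognize that a tree image forces $\gamma$ to be a fold over both critical points (the Euler characteristic bookkeeping), and one must carry the auxiliary distance estimate \emph{(ii)} in parallel with \emph{(i)}, taking care to derive \emph{(i)} at level $k$ from \emph{(ii)} at level $k-1$ and only then derive \emph{(ii)} at level $k$, so that the shortest-path comparison does not become circular.
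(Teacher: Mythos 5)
Your proof is correct, but it takes a genuinely different route from the paper's. The paper works forward: its Lemma~\ref{lem:tc} shows that a cycle containing at most one critical point maps onto a subgraph containing a cycle of no greater length; since every edge is eventually mapped to $E_0$, iterating this produces a cycle through \emph{both critical points}, so every cycle has length at least $2k$, where $k$ is the graph distance between the two critical points (Lemma~\ref{lem:sscc}); the delicate step is then Lemma~\ref{lem:cd}, which identifies $k$ with the critical distance $l$. You instead induct on the levels $\mathcal{G}^k=f^{-1}(\mathcal{G}^{k-1})$ of Lemma~\ref{lem:fc}, dispose of the case of a cyclic image by the inductive hypothesis, and in the tree-image case use the Euler characteristic of the degree-two branched cover to show that the cycle is exactly the double of the geodesic joining the two \emph{critical values}, carrying in parallel the estimate $d_{\mathcal{G}^k}(f(c_1),f(c_2))\ge l$, anchored at level $0$ where $\mathcal{G}^0$ is a tree and $f(\mathcal{C})$ is the unique path of length $l$ between the critical values. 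The main gain of your argument is that it bypasses Lemma~\ref{lem:cd} entirely (the most technical lemma in the paper's proof) by keying everything to critical values rather than critical points, and it yields finer structural information: a cycle with tree image must be the branched double of a path between the critical values. The paper's route buys elementarity: it needs no Riemann--Hurwitz bookkeeping or regular-neighborhood topology, only the soft simplicial Lemma~\ref{lem:tc}. One point you should spell out: ruling out exactly one critical point in $f^{-1}(T)$ requires connectivity of $f^{-1}(T)$, not just $\chi(f^{-1}(T))=1$, since a tree component together with a unicyclic component also has total Euler characteristic $1$; connectivity follows from the argument you only gesture at (the preimage of a regular disk neighborhood of $T$ branched over one point is again a disk, by the nontrivial monodromy), so the gap is cosmetic, but as written the deduction ``not a forest forces $r=2$'' skips this case.
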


The proof of the proposition consists of several lemmas.
\begin{lem}\label{lem:tc}
Let $\mathcal{K} \subseteq \mathcal{G}$ be a simple closed curve.
Then either $\mathcal{K}$ contains both critical points, or the image $f(\mathcal{K})$ contains a simple closed curve of length $\leq$ length of $\mathcal{K}$.
\end{lem}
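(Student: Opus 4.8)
The plan is to prove the dichotomy purely combinatorially, exploiting that a quadratic map has exactly two critical vertices. Writing $\mathcal{K}$ as a cyclic sequence of vertices $v_0,v_1,\ldots,v_{n-1}$, where $n$ is the length of $\mathcal{K}$ (and $n\ge 3$ since $\mathcal{G}$ is simple), I would consider the closed walk $W := f(v_0),f(v_1),\ldots,f(v_{n-1})$ of length $n$, whose image is the subgraph $f(\mathcal{K})$. Since $f$ has exactly two critical points, namely the vertices $a$ and $c$, the first alternative of the lemma is precisely the statement that both $a$ and $c$ lie on $\mathcal{K}$. So I would assume $\mathcal{K}$ omits at least one of them and produce a short simple closed curve inside $f(\mathcal{K})$.

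First I would isolate the only local input coming from the branched covering: $W$ can have a cyclic backtracking at position $k$, i.e. $f(v_{k-1})=f(v_{k+1})$, only when $v_k$ is a critical vertex. Indeed, $v_{k-1}\ne v_{k+1}$ because $\mathcal{K}$ is simple, so a backtracking means that the two distinct edges $[v_{k-1},v_k]$ and $[v_k,v_{k+1}]$ incident to $v_k$ share the same image edge; by Corollary \ref{cor:is} this can only occur where $f$ fails to be locally injective, i.e.\ at a critical vertex. As the $v_k$ are pairwise distinct, distinct backtracking positions lie at distinct critical vertices, so the number of backtrackings of $W$ is at most the number of critical vertices on $\mathcal{K}$. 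Under our assumption this is at most one.

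The combinatorial heart is then an elementary sublemma: a nontrivial closed walk whose image is a tree must have at least two cyclic backtrackings. I would prove this by leaf-counting — a finite tree with at least two vertices has at least two leaves; the walk surjects onto its image tree and so visits each leaf, and every visit to a degree-one vertex is forced to be a backtracking (its two neighbors in the walk must coincide with the unique adjacent vertex); two distinct leaves supply two distinct backtracking positions. Combining with the previous paragraph via the contrapositive: since $W$ is nontrivial ($n\ge 3$) and has at most one backtracking, its image $f(\mathcal{K})$ is not a tree, hence contains a cycle, i.e.\ a simple closed curve. Because $f(\mathcal{K})$ is the union of the images of the $n$ edges of $\mathcal{K}$, it has at most $n$ edges, so that cycle has length at most $n$, the length of $\mathcal{K}$ — which is the second alternative.

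I expect the main obstacle to be the single-critical-vertex case, together with the temptation to invoke the \emph{false} converse ``a critical vertex on $\mathcal{K}$ forces a backtracking'' (it does not: the two edges of $\mathcal{K}$ at a critical vertex fold together only when they are opposite across the local $z\mapsto z^2$ model). The argument above sidesteps this by using only the implication backtracking $\Rightarrow$ critical vertex and by relocating the real content into the tree sublemma; the one point needing care is that distinct backtracking positions sit at distinct vertices, which uses the simplicity of $\mathcal{K}$. Before concluding I would also check the degenerate bookkeeping, namely that $n\ge 3$ so $W$ is genuinely nontrivial and that all indices are read cyclically modulo $n$.
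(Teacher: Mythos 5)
Your proposal is correct and follows essentially the same route as the paper's proof: the paper likewise observes that $f$ is locally injective at all but at most one vertex of $\mathcal{K}$ (folding of edges can only happen at a critical vertex) and concludes that $f(\mathcal{K})$ must contain a simple closed curve of no greater length, since $f$ is simplicial. Your leaf-counting sublemma simply makes rigorous the step the paper leaves implicit, and the only quibble is attribution: local injectivity at non-critical vertices comes from the absence of critical points on Fatou component boundaries (the fact behind Lemma~\ref{lem:njc} and Corollary~\ref{cor:is}) rather than from Corollary~\ref{cor:is} itself.
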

\begin{proof}
If $\mathcal{K}$ contains at most one critical point, $f$ is locally injective at all but at most one vertex of $\mathcal{K}$. Thus $f(\mathcal{K})$ contains a simple closed curve. It has smaller length as $f$ is a simplicial map.
\end{proof}

\begin{lem}\label{lem:sscc}
Let $k$ be the distance between the two critical points in the graph metric of $\mathcal{G}$.
Then any simple closed curve in $\mathcal{G}$ has length $\geq 2k$.
Moreover, there exists a simple closed curve passing through the two critical points with length $2k$.
\end{lem}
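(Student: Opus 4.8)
The plan is to prove the two assertions separately: the lower bound by a forward-pushing induction built on Lemma~\ref{lem:tc}, and the sharp realization by exploiting the deck involution of the degree-$2$ map $f$. Write $\lvert\cdot\rvert$ for the combinatorial length (number of edges) and denote the two critical vertices by $a$ (the critical endpoint of $E_0=[a,b]$) and $c$ (the pre-periodic one), so that $k=d(a,c)$.

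\emph{Lower bound.} I would first show every simple closed curve $\mathcal{K}\subseteq\mathcal{G}$ has $\lvert\mathcal{K}\rvert\geq 2k$. Set $\mathcal{K}_0=\mathcal{K}$. Lemma~\ref{lem:tc} gives a dichotomy at each stage: either $\mathcal{K}_i$ already passes through both critical points, or $f(\mathcal{K}_i)$ contains a simple closed curve $\mathcal{K}_{i+1}$ with $\lvert\mathcal{K}_{i+1}\rvert\leq\lvert\mathcal{K}_i\rvert$. Since $\mathcal{K}_i\subseteq f^i(\mathcal{K})$ and every edge of $\mathcal{G}$ is eventually mapped to the single fixed edge $E_0$ by Theorem~\ref{thm:dfg}, there is an $N$ with $f^N(\mathcal{K})=E_0$; as a single edge contains no simple closed curve, the iteration must terminate at some $\mathcal{K}_{i^*}$ passing through both critical points. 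Such a cycle splits into two arcs joining $a$ to $c$, each of length $\geq k$, so $\lvert\mathcal{K}\rvert\geq\lvert\mathcal{K}_{i^*}\rvert\geq 2k$, using that the lengths are non-increasing along the iteration.

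\emph{The deck involution.} The key tool for the realization is the unique Möbius involution $\iota$ satisfying $f\circ\iota=f$, whose two fixed points are exactly the two critical points $a$ and $c$ of the quadratic map $f$. Since $\iota$ permutes the fibers of $f$, it preserves the Julia set, hence permutes Fatou components and induces a graph automorphism $\iota_*$ of $\mathcal{G}$ fixing the vertices $a$ and $c$. I would then verify that $\iota_*$ fixes \emph{no other} vertex: if $\iota$ preserved a Fatou component $U_v$ with $v\neq a,c$, then as a self-homeomorphism of the closed Jordan domain $\overline{U_v}$ it would have a fixed point there (Brouwer), forcing $a$ or $c$ to lie in $\overline{U_v}$; this is impossible since $a,c$ lie in the Fatou set while $\partial U_v\subseteq\Julia(f)$ and $a,c\notin U_v$. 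The same idea yields $k\geq 2$: if $U_a$ and $U_c$ touched at a point $x$, then $\iota$ would preserve both components and hence fix their unique contact point $x\notin\{a,c\}$, a contradiction.

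\emph{Assembling the curve.} Finally I would take a geodesic $\gamma=(a=w_0,w_1,\dots,w_k=c)$ and form $\gamma\cup\iota_*\gamma$. As $\iota_*$ is a distance-preserving automorphism fixing $a$ and $c$, the path $\iota_*\gamma$ is again a geodesic, whose $j$-th vertex $\iota_*w_j$ sits at distance $j$ from $a$. If $\gamma$ and $\iota_*\gamma$ shared an internal vertex $w$, it would occupy the same position $j$ in both geodesics, giving $w=w_j=\iota_*w_j$ and contradicting that $\iota_*$ fixes only $a,c$; hence the two geodesics meet only at $a$ and $c$, and $\gamma\cup\iota_*\gamma$ is a simple closed curve of length $2k$ through both critical points (genuine because $k\geq 2$). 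I expect the main obstacle to be the realization step, and specifically the observation that the degree-$2$ deck involution descends to a simplicial automorphism of $\mathcal{G}$ whose only fixed vertices are the critical ones—this is exactly the input that forces two internally disjoint geodesics from $a$ to $c$ to exist, while the lower bound is a comparatively routine induction.
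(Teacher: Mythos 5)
Your proposal is correct. The lower-bound half is exactly the paper's argument: iterate Lemma~\ref{lem:tc}, use that $f^N(\mathcal{K})=E_0$ for large $N$ to force the dichotomy to terminate at a simple closed curve through both critical points, and split that curve into two arcs of length at least $k$. For the realization, the two proofs in fact build the \emph{same} curve but certify it by different means: the paper takes a geodesic $\gamma$ from $a$ to $c$, asserts by minimality that $f(\gamma)$ is a simple path of length $k$, and sets $\mathcal{L}=f^{-1}(f(\gamma))$ --- which equals your $\gamma\cup\iota_*\gamma$, since the fiber of $f$ through any point $z$ is $\{z,\iota(z)\}$. Where the paper deduces simplicity of $\mathcal{L}$ from simplicity of $f(\gamma)$ together with the structure of a degree-two branched cover branched exactly over the two endpoints, you deduce it from the fact that the induced automorphism $\iota_*$ fixes no vertex other than $a$ and $c$ (your Brouwer argument on closures of Fatou components), so $\gamma$ and $\iota_*\gamma$ are internally disjoint geodesics. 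Your route is more self-contained --- it unpacks the paper's ``by the minimality of $\gamma$, it is easy to see'' --- and it additionally establishes $k\geq 2$, which both constructions tacitly need: if $k=1$, the pullback $f^{-1}(f(\gamma))$ would force two distinct tangency points between the Fatou components of the two critical points, i.e.\ a double edge, impossible in the simple graph $\mathcal{G}$. The paper never addresses this inside the proof of this lemma (it only becomes available later, via the critical loop having length $2l$ with $l\geq 2$ and Lemma~\ref{lem:cd}), so your deck-involution bookkeeping both supplies the suppressed details and closes a small gap.
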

\begin{proof}
Since every edge of $\mathcal{G}$ is eventually mapped to $E_0$, $f^n(\mathcal{K}) = E_0$ for sufficiently large $n$.
Thus, inductively applying Lemma \ref{lem:tc}, and conclude that there exists a simple closed curve containing both critical points with length $\leq$ length of $\mathcal{K}$.
This implies that the length of $\mathcal{K}$ is at least $2k$.

For the moreover part, let $\gamma \subseteq \mathcal{G}$ be a path connecting the two critical points with length $k$.
By the minimality of $\gamma$, it is easy to see that $f(\gamma)$ is a simple path of length $k$.
Let $\mathcal{L} = f^{-1}(f(\gamma))$. Then $\mathcal{L}$ is a simple closed curve of length $2k$.
\end{proof}

\begin{lem}\label{lem:cd}
The critical distance $l$ equals to the distance between the two critical points in the graph metric of $\mathcal{G}$.
\end{lem}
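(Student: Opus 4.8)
First I would fix notation and dispatch the easy inequality. Write $k=d_{\mathcal{G}}(a,c)$ for the graph distance between the two critical vertices, taking $a$ to be the critical vertex lying on $\partial E_0$ (so $f(a)=b$) and $c$ the other one; then the two critical values $b$ and $f(c)$ are the endpoints of the path $f(\mathcal{C})$. One inequality is built into the critical loop: since $f$ folds $\mathcal{C}$ two-to-one onto the simple path $f(\mathcal{C})$ of length $l$, the fold points $a$ and $c$ cut $\mathcal{C}$ into two arcs each mapped homeomorphically onto $f(\mathcal{C})$, hence each of length $l$; either arc is a path in $\mathcal{G}$ from $a$ to $c$, so $k\le l$. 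The whole content of the lemma is therefore $k\ge l$, which says that $\mathcal{C}$ is a shortest simple closed curve in $\mathcal{G}$. Granting the bound that every simple closed curve in $\mathcal{G}$ has length $\ge 2l$, the reverse inequality is immediate: by the moreover part of Lemma \ref{lem:sscc} there is a simple closed curve through $a$ and $c$ of length exactly $2k$, so $2k\ge 2l$ and $k\ge l$.

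The plan is to prove this length bound by induction along the filtration $\mathcal{G}^0\subseteq\mathcal{G}^1\subseteq\cdots$ of Lemma \ref{lem:fc}, where $\mathcal{G}^m=f^{-m}(\mathcal{G}^0)$ and $\mathcal{G}=\bigcup_m\mathcal{G}^m$. Let $R(m)$ be the assertion that every simple closed curve contained in $\mathcal{G}^m$ has length $\ge 2l$. Since $\mathcal{G}^0$ is a tree, $R(0)$ holds vacuously, and since $\mathcal{G}^1$ contains the single simple closed curve $\mathcal{C}$ of length $2l$, $R(1)$ holds; as every simple closed curve of $\mathcal{G}$ is finite and so lies in some $\mathcal{G}^m$, proving $R(m)$ for all $m$ gives the bound. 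The key auxiliary statement, which I would deduce from $R(m)$, is $Q(m)$: every path in $\mathcal{G}^m$ from $b$ to $f(c)$ has length $\ge l$. Indeed $f(\mathcal{C})$ is a simple path of length $l$ from $b$ to $f(c)$ lying in $\mathcal{G}^0\subseteq\mathcal{G}^m$, so were there a path $P$ in $\mathcal{G}^m$ of length $<l$ between these points, then $P\ne f(\mathcal{C})$ and $P\cup f(\mathcal{C})$ would contain a simple closed curve of length $<2l$, contradicting $R(m)$.

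For the inductive step $R(m-1)\Rightarrow R(m)$ with $m\ge 2$, I would take a simple closed curve $\mathcal{K}\subseteq\mathcal{G}^m$ and split by Lemma \ref{lem:tc}. If $\mathcal{K}$ omits a critical point, then $f(\mathcal{K})\subseteq\mathcal{G}^{m-1}$ contains a simple closed curve of length $\le\operatorname{len}(\mathcal{K})$, which has length $\ge 2l$ by $R(m-1)$. If $\mathcal{K}$ passes through both critical points, they cut it into arcs $\beta_1,\beta_2$, each mapped by $f$ (locally injectively, as $a,c$ are the only critical vertices) to a walk of the same length from $b$ to $f(c)$ in $\mathcal{G}^{m-1}$; when $f(\beta_1)\ne f(\beta_2)$ their union contains a simple closed curve of length $\le\operatorname{len}(\mathcal{K})$, again $\ge 2l$ by $R(m-1)$, and when $f(\beta_1)=f(\beta_2)$ the map folds $\mathcal{K}$ onto one path of length $\operatorname{len}(\mathcal{K})/2$ from $b$ to $f(c)$ in $\mathcal{G}^{m-1}$, which has length $\ge l$ by $Q(m-1)$, so $\operatorname{len}(\mathcal{K})\ge 2l$. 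The hard part will be this last, ``both critical points'' case: one must justify that $f$ is a genuine fold on $\mathcal{K}$ (so that the two arcs have well-defined image walks of equal length), and in the folding sub-case the bound is not supplied by Lemma \ref{lem:tc} at all but by the tree structure of $\mathcal{G}^0$ through $Q(m-1)$. Once $R(m)$ holds for all $m$, combining with Lemma \ref{lem:sscc} as above yields $k\ge l$, and hence $l=k$.
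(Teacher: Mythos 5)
Your argument is correct, but it runs in the opposite logical direction from the paper's, so it is a genuinely different proof. The paper proves Lemma \ref{lem:cd} directly, without first knowing any girth bound: it takes a geodesic $\gamma=[v_0=a,\dots,v_k=c]$ between the two critical vertices, shows by a forward-orbit estimate (using that $f$ is simplicial and $d(a,f^j(a))\le 1$) that $f^j(v_i)\neq c$ for all $i\neq k-1$ and $j\ge 1$, and then uses the complete invariance $\mathcal{C}=f^{-1}(f(\mathcal{C}))$ together with Lemma \ref{lem:tc} to produce, from any simple closed curve bounded by $\gamma$ and $\mathcal{C}$, a vertex that eventually maps to $c$; this forces either $\gamma\subseteq\mathcal{C}$, or $d(f(a),c)=k-1$ and then the alternative geodesic $\alpha=\alpha'\cup E_0$ to lie in $\mathcal{C}$, giving $k=l$ in both cases. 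Only afterwards does the paper obtain the girth bound of Proposition \ref{prop:sc} by combining Lemmas \ref{lem:sscc} and \ref{lem:cd}. You reverse this order: you prove the bound ``every simple closed curve has length $\ge 2l$'' first, by induction along the filtration $\mathcal{G}^m=f^{-m}(\mathcal{G}^0)$ of Lemma \ref{lem:fc}, with the tree structure of $\mathcal{G}^0$ entering through your auxiliary statement $Q(m)$ about paths joining the critical values $b$ and $f(c)$, and then read off $k\ge l$ from the moreover part of Lemma \ref{lem:sscc}; the easy direction $k\le l$ comes from the fold structure of the critical loop (which the paper also uses, implicitly, when it concludes $l=k$ from $\gamma\subseteq\mathcal{C}$). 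Your route costs more bookkeeping but makes Proposition \ref{prop:sc} a byproduct rather than a corollary, and it replaces the paper's dynamical orbit-tracking claim by purely graph-theoretic induction. Finally, the ``hard part'' you flag in the both-critical-points case is genuinely fixable, and in fact your two sub-cases can be merged: the interior vertices of each arc $\beta_i$ are non-critical, so $f(\beta_i)$ is a walk from $b$ to $f(c)$ in $\mathcal{G}^{m-1}$ of the same length as $\beta_i$; since every walk between two distinct vertices contains a simple path between them, $Q(m-1)$ alone yields that each $\beta_i$ has length $\ge l$, hence $\mathcal{K}$ has length $\ge 2l$, with no need to decide whether $f(\beta_1)=f(\beta_2)$ or whether the image walks are simple.
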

\begin{proof}
Let $k$ be the distance between the two critical points in the graph metric of $\mathcal{G}$, and let $\gamma$ be a path connecting the two critical points with length $k$.
Denote the periodic critical point by $a$ and the strictly pre-periodic critical point by $c$.

Denote the vertices of $\gamma$ in order by $v_0 = a, v_1, ..., v_k = c$.
We claim that for any $i \neq k-1$ and any $j \geq 1$, $f^{j}(v_i) \neq c$.
\begin{proof}[Proof of the claim]
Since $v_k = c$ is strictly pre-periodic, $f^{j}(v_k) \neq c$ for any $j \geq 1$.
Since $f$ is simplicial, we have $d(f^j(a), f^j(v_i)) \leq d(a, v_i) \leq i$ for any $j \geq 1$.
Since $d(a, f^j(a)) \leq 1$, we conclude that for any $i \leq k-2$, we have 
$$
d(a, f^j(v_i)) \leq d(f^j(a), f^j(v_i)) + 1 \leq i+1 \leq k-1.
$$
Since $d(a, c) = k$, this proves the claim.
\end{proof}

If $\gamma \subseteq \mathcal{C}$, then we have $l = k$.

Otherwise, $\gamma$ and $\mathcal{C}$ bounds some simple closed curve $\mathcal{K}$.
Since $\mathcal{C} = f^{-1}(f(\mathcal{C}))$, $f(\mathcal{K})$ contains a simple closed curve.
Thus, inductively applying Lemma \ref{lem:tc}, we can conclude that there exists some vertex $v$ of $\mathcal{K}$ and some iterate $j \geq 1$ so that $f^j(v) = c$.
Since no vertex in $\mathcal{C}$ is mapped to $c$ by $f^j$ for any $j \geq 1$, together with the previous claim, there exists $j$ so that $f^j(v_{k-1}) = c$.
Therefore, $d(f(a), c) = k-1$.

Let $\alpha'$ be a path of length $k-1$ connecting $f(a)$ and $c$, and let $\alpha = \alpha' \cup E_0$.
Note that $\alpha$ is a path of length $k$ connecting the two critical points that contains $E_0$.
Denote the vertices of $\alpha$ in order by $w_0 = a, w_1, ..., w_k = c$.
Since $f(w_1) = a$, we have that for any $i \leq k-1$ and any $j \geq 1$
$$
d(a, f^j(w_i)) \leq d(f^{j}(w_1), f^j(w_i)) + 1 \leq i \leq k-1.
$$
Thus, for any $i \leq k-1$ and any $j \geq 1$, $f^{j}(w_i) \neq c$.
Therefore, the same argument as in the previous paragraph implies that $\alpha \subseteq \mathcal{C}$, so $l = k$.
\end{proof}

\begin{proof}[Proof of Proposition \ref{prop:sc}]
Combining Lemma \ref{lem:sscc} and Lemma \ref{lem:cd}, we have the result.
\end{proof}

\subsection{Shortest anchored simple closed curves}
In this subsection, we further analyze the structures of shortest simple closed curves in $\mathcal{G}$.
More precisely, we say a simple closed curve that contains $E_0$ an {\em anchored simple closed curve}.
We will study these shortest anchored simple closed curves.

\begin{figure}[ht]
  \centering
  \resizebox{0.8\linewidth}{!}{
    \def\svgwidth{\columnwidth}
\begingroup%
  \makeatletter%
  \providecommand\color[2][]{%
    \errmessage{(Inkscape) Color is used for the text in Inkscape, but the package 'color.sty' is not loaded}%
    \renewcommand\color[2][]{}%
  }%
  \providecommand\transparent[1]{%
    \errmessage{(Inkscape) Transparency is used (non-zero) for the text in Inkscape, but the package 'transparent.sty' is not loaded}%
    \renewcommand\transparent[1]{}%
  }%
  \providecommand\rotatebox[2]{#2}%
  \newcommand*\fsize{\dimexpr\f@size pt\relax}%
  \newcommand*\lineheight[1]{\fontsize{\fsize}{#1\fsize}\selectfont}%
  \ifx\svgwidth\undefined%
    \setlength{\unitlength}{581.25bp}%
    \ifx\svgscale\undefined%
      \relax%
    \else%
      \setlength{\unitlength}{\unitlength * \real{\svgscale}}%
    \fi%
  \else%
    \setlength{\unitlength}{\svgwidth}%
  \fi%
  \global\let\svgwidth\undefined%
  \global\let\svgscale\undefined%
  \makeatother%
  \begin{picture}(1,0.85806452)%
    \lineheight{1}%
    \setlength\tabcolsep{0pt}%
    \put(0,0){\includegraphics[width=\unitlength,page=1]{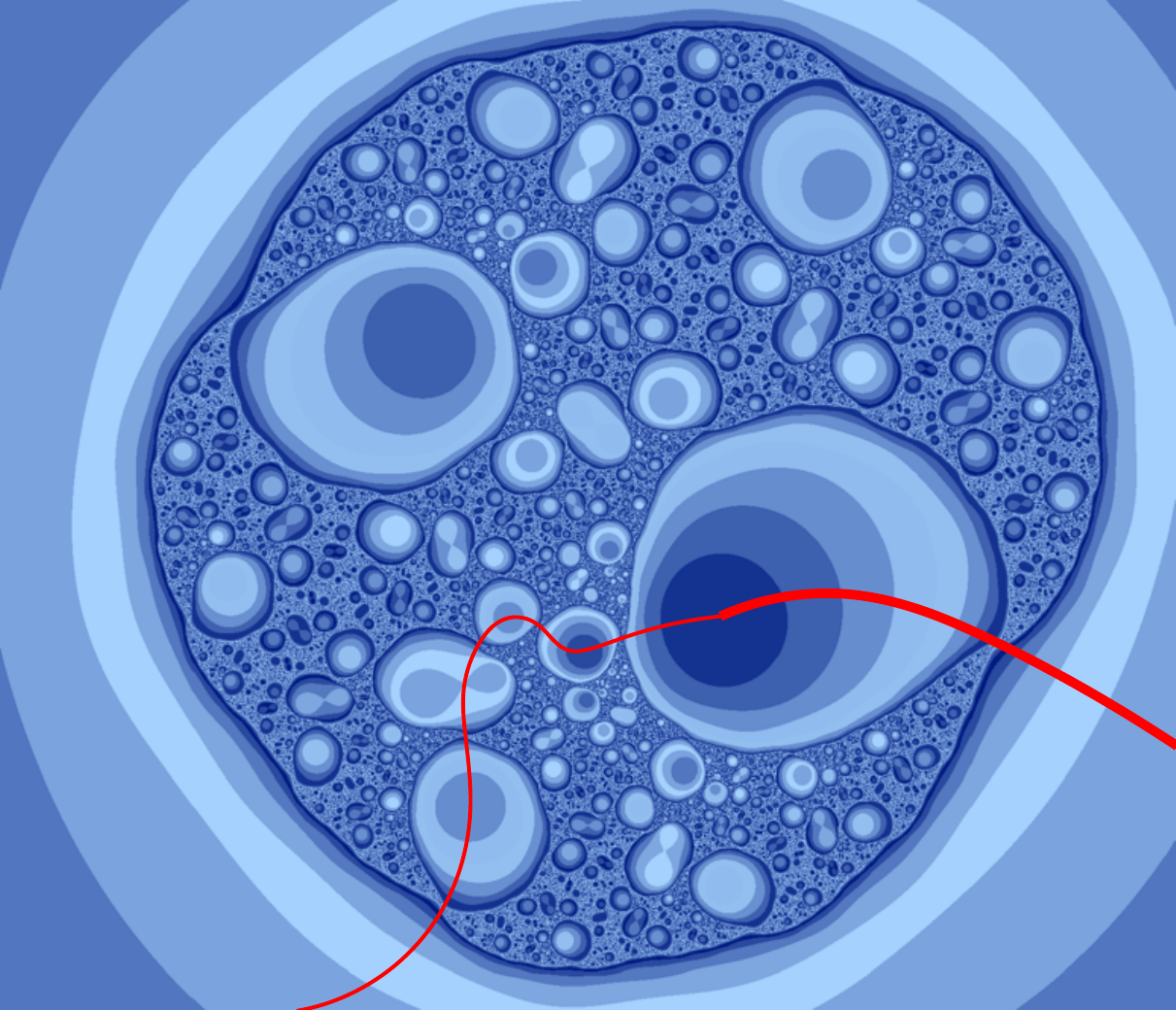}}%
    \put(0.74391901,0.37103839){\color[rgb]{1,0,0}\makebox(0,0)[lt]{\lineheight{1.25}\smash{\begin{tabular}[t]{l}{\Large $E_0$}\end{tabular}}}}%
    \put(0.27605099,0.0618891){\color[rgb]{1,0,0}\makebox(0,0)[lt]{\lineheight{1.25}\smash{\begin{tabular}[t]{l}{\Large $\mathcal{C}$}\end{tabular}}}}%
    \put(0.08112933,0.71913102){\color[rgb]{0,0,0}\makebox(0,0)[lt]{\lineheight{1.25}\smash{\begin{tabular}[t]{l}{\Large $\mathcal{C}_1$}\end{tabular}}}}%
    \put(0.66730697,0.01912464){\color[rgb]{0,0,0}\makebox(0,0)[lt]{\lineheight{1.25}\smash{\begin{tabular}[t]{l}{\Large $\mathcal{C}_2$}\end{tabular}}}}%
    \put(0.7554604,0.79417239){\color[rgb]{0,0,0}\makebox(0,0)[lt]{\lineheight{1.25}\smash{\begin{tabular}[t]{l}{\Large $\mathcal{C}_3$}\end{tabular}}}}%
    \put(0,0){\includegraphics[width=\unitlength,page=2]{TypeI.pdf}}%
  \end{picture}%
\endgroup%

  }
  \caption{The shortest anchored simple closed curves for a Type I map.}
  \label{fig:TypeI}
\end{figure}

\begin{figure}[ht]
  \centering
  \resizebox{0.8\linewidth}{!}{
    \def\svgwidth{\columnwidth}
\begingroup%
  \makeatletter%
  \providecommand\color[2][]{%
    \errmessage{(Inkscape) Color is used for the text in Inkscape, but the package 'color.sty' is not loaded}%
    \renewcommand\color[2][]{}%
  }%
  \providecommand\transparent[1]{%
    \errmessage{(Inkscape) Transparency is used (non-zero) for the text in Inkscape, but the package 'transparent.sty' is not loaded}%
    \renewcommand\transparent[1]{}%
  }%
  \providecommand\rotatebox[2]{#2}%
  \newcommand*\fsize{\dimexpr\f@size pt\relax}%
  \newcommand*\lineheight[1]{\fontsize{\fsize}{#1\fsize}\selectfont}%
  \ifx\svgwidth\undefined%
    \setlength{\unitlength}{581.25bp}%
    \ifx\svgscale\undefined%
      \relax%
    \else%
      \setlength{\unitlength}{\unitlength * \real{\svgscale}}%
    \fi%
  \else%
    \setlength{\unitlength}{\svgwidth}%
  \fi%
  \global\let\svgwidth\undefined%
  \global\let\svgscale\undefined%
  \makeatother%
  \begin{picture}(1,0.85806452)%
    \lineheight{1}%
    \setlength\tabcolsep{0pt}%
    \put(0,0){\includegraphics[width=\unitlength,page=1]{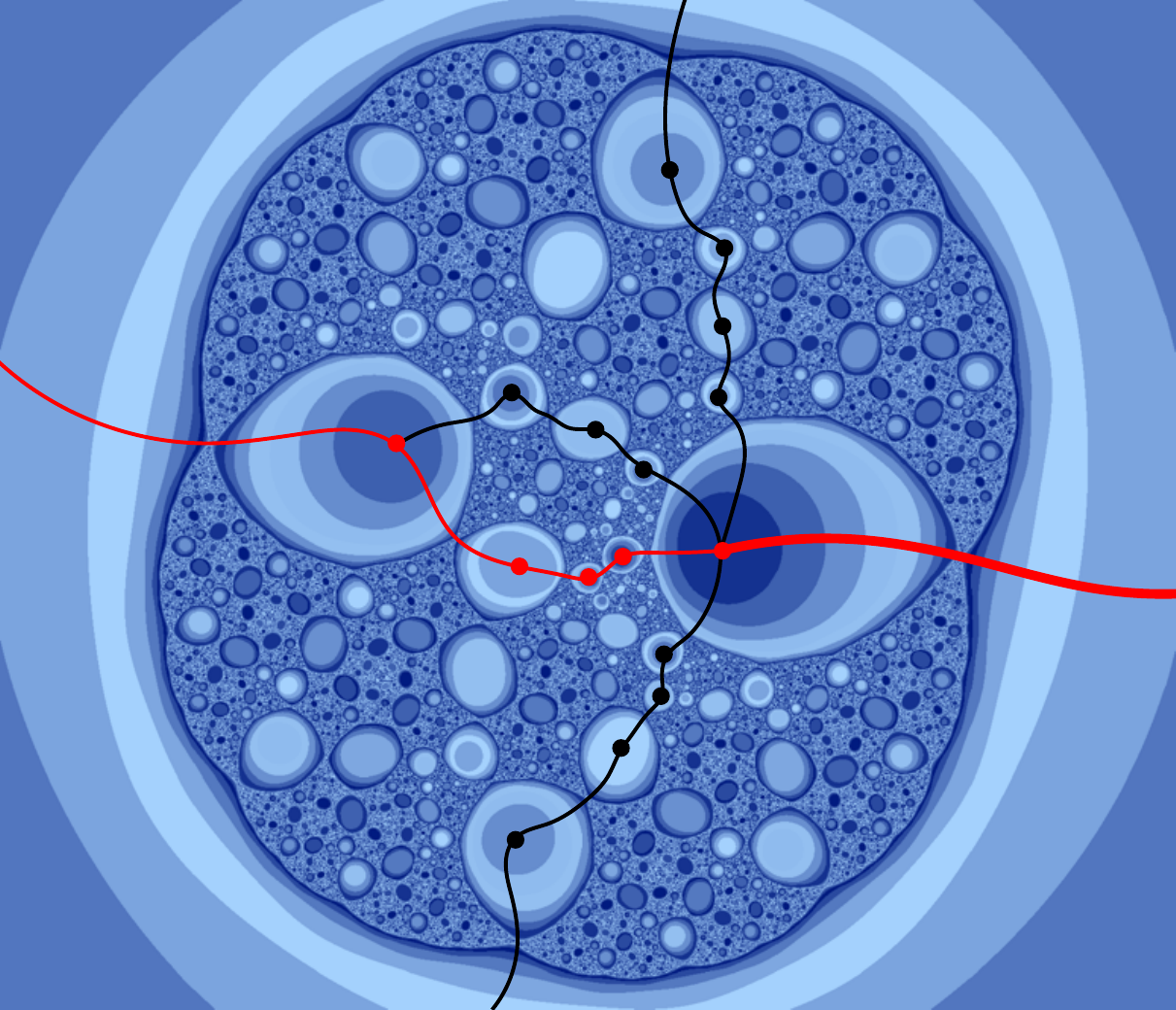}}%
    \put(0.68712205,0.42835205){\color[rgb]{1,0,0}\makebox(0,0)[lt]{\lineheight{1.25}\smash{\begin{tabular}[t]{l}{\Large $E_0$}\end{tabular}}}}%
    \put(0.05698962,0.52911962){\color[rgb]{1,0,0}\makebox(0,0)[lt]{\lineheight{1.25}\smash{\begin{tabular}[t]{l}{\Large $\mathcal{C}$}\end{tabular}}}}%
    \put(0.44301459,0.46044323){\color[rgb]{0,0,0}\makebox(0,0)[lt]{\lineheight{1.25}\smash{\begin{tabular}[t]{l}{\Large $\mathcal{C}_1$}\end{tabular}}}}%
    \put(0.37914137,0.01797135){\color[rgb]{0,0,0}\makebox(0,0)[lt]{\lineheight{1.25}\smash{\begin{tabular}[t]{l}{\Large $\mathcal{C}_2$}\end{tabular}}}}%
    \put(0.58806026,0.82434779){\color[rgb]{0,0,0}\makebox(0,0)[lt]{\lineheight{1.25}\smash{\begin{tabular}[t]{l}{\Large $\mathcal{C}_3$}\end{tabular}}}}%
  \end{picture}%
\endgroup%

  }
  \caption{The shortest anchored simple closed curves for a Type IIA map.}
  \label{fig:TypeIIA}
\end{figure}

\begin{figure}[ht]
  \centering
  \resizebox{0.8\linewidth}{!}{
    \def\svgwidth{\columnwidth}
    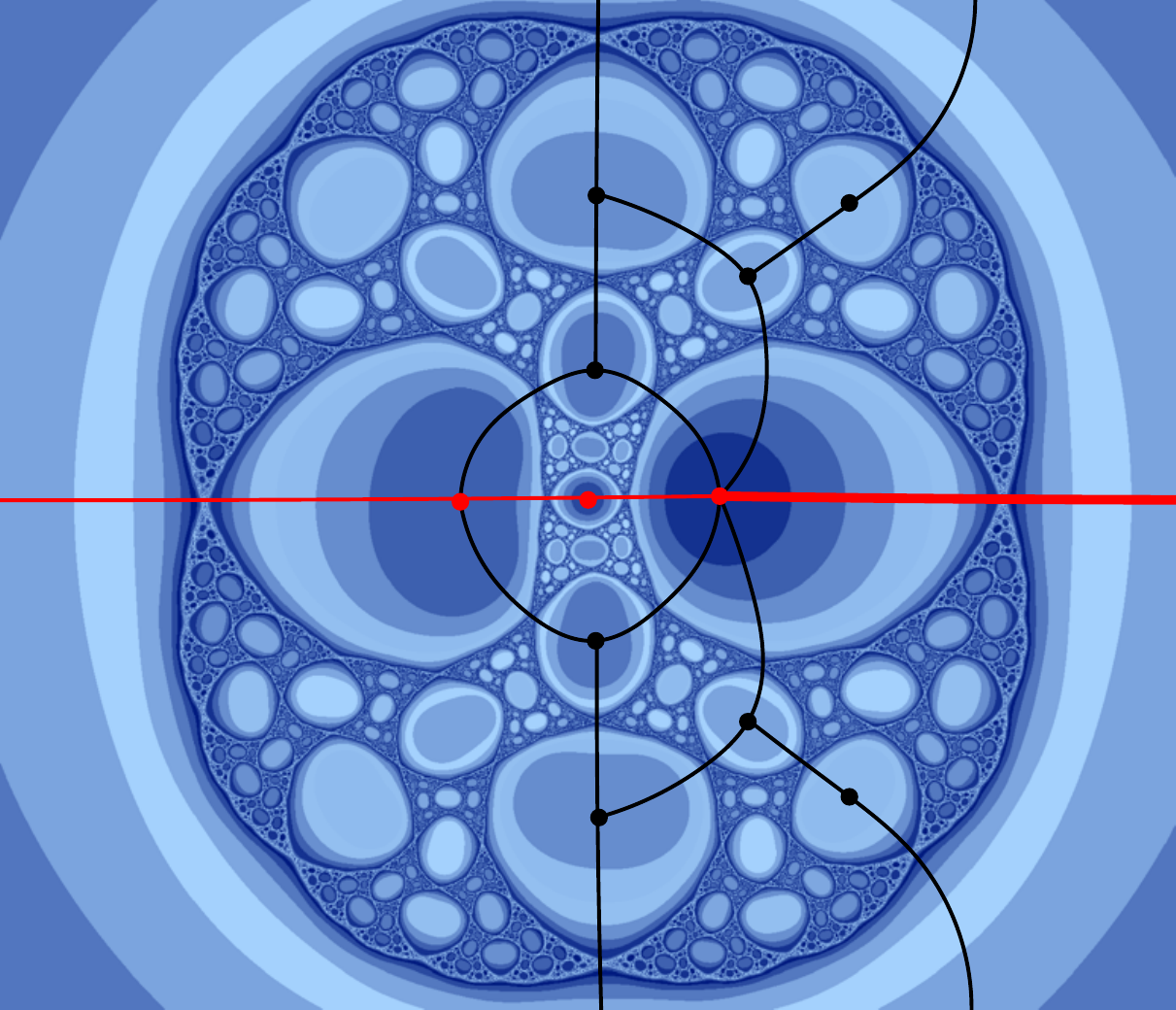

  }
  \caption{The shortest anchored simple closed curves for a Type IIB map.}
  \label{fig:TypeIIB}
\end{figure}

We will use the following lemma
\begin{lem}\label{lem:cl}
Let $\mathcal{K}$ be a shortest anchored simple closed curve.
Then there exists $j\geq 0$ so that $f^j(\mathcal{K})$ is the critical loop $\mathcal{C}$.
\end{lem}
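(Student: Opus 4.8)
The plan is to track the forward orbit of $\mathcal{K}$ and show that it lands on the critical loop $\mathcal{C}$ at the very last moment before the loop disappears. First observe that a shortest anchored simple closed curve has length exactly $2l$: by Proposition~\ref{prop:sc} every simple closed curve in $\mathcal{G}$ has length $\geq 2l$, while $\mathcal{C}$ is itself anchored (it contains $E_0$) and has length $2l$, so it already realizes the minimum. Moreover, since $f(E_0)=E_0$, every forward image $f^i(\mathcal{K})$ again contains $E_0$; hence all the curves produced below remain anchored, which lets me keep applying the length bound of Proposition~\ref{prop:sc}.

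Next I would set up the iteration. Let $j\geq 0$ be the largest integer for which $f^j(\mathcal{K})$ contains a simple closed curve; this is well defined because $\mathcal{K}$ itself is one, while every edge is eventually mapped to $E_0$, so $f^n(\mathcal{K})=E_0$ for $n$ large and contains none. I claim that for each $0\leq i\leq j$ the image $L_i:=f^i(\mathcal{K})$ is a simple closed curve of length $2l$. Argue by induction: if $L_i$ is such a curve and $i<j$, then $L_{i+1}=f(L_i)$ still contains a simple closed curve, which has length $\geq 2l$ by Proposition~\ref{prop:sc}; since $f$ is simplicial the whole connected image $L_{i+1}$ has length $\leq 2l$, and a connected graph with at most $2l$ edges containing a cycle of length $2l$ must equal that cycle. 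Thus $L_{i+1}$ is a simple closed curve of length exactly $2l$. Now apply Lemma~\ref{lem:tc} to $L_j$: because $f(L_j)=f^{j+1}(\mathcal{K})$ contains no simple closed curve, $L_j$ cannot contain at most one critical point, so $L_j$ passes through both critical points, and $f(L_j)$ is a tree. At this stage $L_j$ is an anchored simple closed curve of length $2l$ through the two critical points.

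It remains to identify $L_j$ with $\mathcal{C}$, which I expect to be the main obstacle. Since $f$ has degree $2$ and both of its critical points lie on $L_j$, the map $f$ has no critical points in either open disk $D_1,D_2$ bounded by the Jordan curve $L_j$. As $\hat{\C}\setminus f(L_j)$ is connected and simply connected (the complement of a tree in the sphere is a disk), each restriction $f|_{D_i}$ is an unbranched covering of a disk, hence a homeomorphism onto $\hat{\C}\setminus f(L_j)$. Consequently no point of $D_1\cup D_2$ maps into $f(L_j)$, giving the saturation identity $f^{-1}(f(L_j))=L_j$; by construction the critical loop satisfies the analogous identity $f^{-1}(f(\mathcal{C}))=\mathcal{C}$. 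The identification then reduces to showing $L_j\subseteq\mathcal{G}^1=f^{-1}(\mathcal{G}^0)$, after which $L_j=\mathcal{C}$ follows since $\mathcal{C}$ is the \emph{unique} simple closed curve in $\mathcal{G}^1$. Concretely, $L_j\subseteq\mathcal{G}^1$ is equivalent to $f(L_j)\subseteq\mathcal{G}^0=\bigcup_{i=0}^{q-1}f^{-i}(E_0)$, i.e.\ to the statement that every vertex of $L_j$ is carried into $E_0$ within $q$ iterates. The delicate point is exactly controlling these pre-periods: one must use that $L_j$ is a geodesic bigon, consisting of two length-$l$ paths joining the two critical points (with $d(a,c)=l$ coming from Lemmas~\ref{lem:sscc} and \ref{lem:cd}), to argue that no vertex on $L_j$ can have pre-period exceeding that of the critical point $c$. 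An alternative route is to prove directly that the image trees coincide, $f(L_j)=f(\mathcal{C})$, and then invoke the two saturation identities to conclude $L_j=\mathcal{C}$; either way, pinning down $f(L_j)$ is where the real work lies.
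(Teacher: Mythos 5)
Your first half is correct, and it is essentially the same iteration scheme as the paper's, organized slightly more cleanly: the paper also pushes $\mathcal{K}$ forward and shows each image remains a shortest anchored simple closed curve, while your edge-count observation (the image has at most $2l$ edges, any cycle in it has length $\geq 2l$ by Proposition~\ref{prop:sc}, hence the image equals that cycle) neatly replaces half of the paper's case analysis. One small repair is needed: define $j$ as the last index such that $f^i(\mathcal{K})$ contains a simple closed curve for \emph{all} $i\leq j$ (first failure minus one), not as the largest index with that property. Your induction needs "contains a cycle" at every intermediate step, and since the image of an acyclic subgraph under a simplicial map is not obviously acyclic, maximality alone does not supply this.

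The genuine gap is the final step, and you flag it yourself: you never prove $L_j=\mathcal{C}$, which is the actual content of the lemma. Two concrete problems. First, your covering-space argument for the saturation identity $f^{-1}(f(L_j))=L_j$ is circular: because the critical values lie on $f(L_j)$, what the argument actually yields is that $f$ restricts to a trivial degree-$2$ covering from $\hat{\C}\setminus f^{-1}(f(L_j))$ onto $\hat{\C}\setminus f(L_j)$, i.e.\ each disk $D_i$ contains exactly one component of $\hat{\C}\setminus f^{-1}(f(L_j))$ mapping homeomorphically. It does not rule out extra preimage edges of $f(L_j)$ protruding into the disks (which happens exactly when some edge of $f(L_j)$ is covered only once by $L_j$); asserting that no point of $D_1\cup D_2$ maps into $f(L_j)$ \emph{is} the saturation identity, so nothing has been proved. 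Second, even granting saturation, the reduction to $L_j\subseteq\mathcal{G}^1$ and the "pre-period control" is left entirely open. The paper closes precisely this gap by a different argument, whose contrapositive is what you need: if a shortest anchored curve $\mathcal{K}\neq\mathcal{C}$ contains both critical points, write $\mathcal{K}=\gamma\cup\gamma'$ as two length-$l$ arcs joining the critical points with $\gamma\nsubseteq\mathcal{C}$; minimality together with Proposition~\ref{prop:sc} forces $\Int\gamma\cap\mathcal{C}=\emptyset$, hence $E_0\subseteq\mathcal{K}-\Int(\gamma)\subseteq\mathcal{C}$; then the saturation property of the \emph{critical loop} itself, $\mathcal{C}=f^{-1}(f(\mathcal{C}))$ — which is known by construction, unlike the identity you want for $L_j$ — gives $f(\Int\gamma)\cap f(\mathcal{C})=\emptyset$, so $f(\mathcal{K})$ is again a simple closed curve. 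In particular, if $f(L_j)$ contains no cycle and $L_j$ passes through both critical points, then $L_j$ must equal $\mathcal{C}$. Without this (or an equivalent substitute), your proof does not go through.
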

\begin{proof}
We claim that if $\mathcal{K} \neq \mathcal{C}$, then $f(\mathcal{K})$ is a shortest anchored simple closed curve as well.
\begin{proof}[Proof of the claim]
If $\mathcal{K}$ does not contain both critical points, by minimality of the length of $\mathcal{K}$, then $f(\mathcal{K})$ is a shortest anchored simple closed curve.

Otherwise, let $\gamma \subseteq \mathcal{K}$ be a path of length $l$ that connects the two critical points so that $\gamma \nsubseteq \mathcal{C}$.
By minimality and Proposition \ref{prop:sc}, $\gamma$ is disjoint from $\mathcal{C}$ other than the two end points.
Since $\mathcal{K}$ is anchored, we have $E_0 \subseteq \mathcal{K} - \Int(\gamma) \subseteq \mathcal{C}$.
Therefore, $f(\mathcal{K})$ is a simple closed curve. By minimality of the length of $\mathcal{K}$, it is a shortest anchored simple closed curve.
\end{proof}
The lemma follows by inductively applying the above claim.
\end{proof}

Let $\mathcal{C}_1 \neq \mathcal{C}_2$ be two shortest anchored simple closed curves.
We say they are {\em siblings} if $E_0 \subsetneq \mathcal{C}_1 \cap \mathcal{C}_2$, i.e., their intersection contains more than $E_0$.
We say they are {\em non-siblings} if $E_0 = \mathcal{C}_1 \cap \mathcal{C}_2$.
We will prove
\begin{prop}\label{prop:sascc}
Let $f$ be the \pcf map of captured type in $\Per_2(0)$.
Then there are infinitely many shortest anchored simple closed curves, and each is mapped to the critical loop $\mathcal{C}$ by some iterates of $f$.

Moreover,
\begin{itemize}
    \item if $f$ is Type I, then no shortest anchored simple closed curve has siblings;
    \item if $f$ is Type IIA, then the critical loop has one sibling, and other than these two, none has any siblings;
    \item if $f$ is Type IIB, then the critical loop has two siblings, each of which has exactly three siblings, and every other shortest anchored simple closed curve has exactly two siblings.
\end{itemize}
\end{prop}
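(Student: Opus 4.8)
The plan is to study the set $S$ of shortest anchored simple closed curves through the forward dynamics, leaning on two facts already assembled: by Proposition~\ref{prop:sc} and (the proof of) Lemma~\ref{lem:sscc} every shortest anchored simple closed curve has length $2l$ and passes through \emph{both} critical vertices $a$ and $c$, and by Lemma~\ref{lem:cl} every such curve is carried to the critical loop $\mathcal{C}$ by some forward iterate of $f$. I would first record the geometry of $\mathcal{C}$ once and for all: it is the union of two arcs $\gamma,\gamma'$ from $a$ to $c$, each of length $l$, with $E_0=[a,b]$ the initial edge of $\gamma$; moreover $f$ folds $\mathcal{C}$ at the critical vertices $a,c$, so $f(\mathcal{C})=f(\gamma)=f(\gamma')$ is a single length-$l$ path $P$ from $b=f(a)$ to $f(c)$ with $E_0$ as its initial edge. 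The three types of \S\ref{sec:CT} are then precisely the three possibilities for how much of $P$ returns to $\mathcal{C}$, i.e.\ for the overlap $\mathcal{C}\cap P$.

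For the infinitude I would set up the partial self-map $f\colon S\setminus\{\mathcal{C}\}\to S$. Indeed, the dichotomy in the proof of Lemma~\ref{lem:cl} shows that if $\mathcal{K}\ne\mathcal{C}$ is a shortest anchored simple closed curve then $f(\mathcal{K})$ is again one, while $f(\mathcal{C})=P$ is a path and leaves $S$. Combined with Lemma~\ref{lem:cl}, this exhibits $S$ as the set of iterated preimages of $\mathcal{C}$ under this map, organized as a rooted tree with root $\mathcal{C}$. Infinitude then reduces to producing, for each $\mathcal{K}\in S$, at least one preimage in $S$: using that $\widetilde{F}$ restricts to a homeomorphism from each complementary disk $D_i$ of $\mathcal{C}$ onto $\hat\C-P$ together with $f(E_0)=E_0$, I can pull $\mathcal{K}$ back through a complementary disk and re-attach $E_0$ to obtain an anchored length-$2l$ preimage distinct from $\mathcal{K}$ and lying one level deeper in the filtration $\mathcal{G}=\bigcup_k\mathcal{G}^k$. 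Iterating gives infinitely many curves, each mapping to $\mathcal{C}$ by construction; this is the comparatively soft part of the argument.

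The sibling counts are the substance, and here I would argue type by type from the explicit overlap $\mathcal{C}\cap P$. Writing each $\mathcal{K}\in S$ as $\alpha_{\mathcal{K}}\cup\beta_{\mathcal{K}}$ with $\alpha_{\mathcal{K}}$ the $a$--$c$ arc carrying $E_0$, two curves are siblings exactly when they share a connected arc strictly larger than $E_0$, i.e.\ when they agree past $E_0$ along one side. I would first count siblings of the root $\mathcal{C}$: in Type~I the overlap $\mathcal{C}\cap P=E_0$ forces the two disk-branches to yield preimages meeting $\mathcal{C}$, and each other, only along $E_0$, so $\mathcal{C}$ has no sibling; in Type~IIA the single extra segment of $P$ lying on $\mathcal{C}$ produces exactly one sibling $\mathcal{C}^{*}$; and in Type~IIB the full containment $P\subseteq\mathcal{C}$ produces exactly two (see Figures~\ref{fig:TypeI}--\ref{fig:TypeIIB}). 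I would then propagate downward through the tree: because $f$ is a homeomorphism on each $D_i$, the sibling relation is governed entirely by the local overlap at the last stage where a curve differs from its parent, so pulling back transfers the count with the stated shift — only the finitely many curves near the root (none in Type~I, the pair $\{\mathcal{C},\mathcal{C}^{*}\}$ in Type~IIA, and the three curves around $\mathcal{C}$ in Type~IIB) are exceptional, while every remaining curve carries the generic sibling number $0$, $0$, and $1$ respectively, which is exactly the assertion of the proposition.

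The main obstacle I expect is this final propagation step rather than the infinitude. Concretely, one must verify that when $\mathcal{K}'$ and $\mathcal{K}''$ are siblings their common arc is mapped by $f$ onto the common arc of $f(\mathcal{K}')$ and $f(\mathcal{K}'')$ without folding, so that siblings map to siblings or collapse to a single curve, and conversely that the two branch-preimages of a given curve are siblings precisely when dictated by $\mathcal{C}\cap P$. Establishing this correspondence shows simultaneously that the pullback creates no spurious siblings and loses none, so that the sibling count neither drifts nor accumulates as one descends the tree; pinning it down is where the combinatorics of the overlap in each type must be checked carefully.
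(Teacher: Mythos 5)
Your outline is the same as the paper's --- Lemma \ref{lem:cl} supplies the ``eventually mapped to $\mathcal{C}$'' clause, the shortest anchored curves are organized as iterated preimages of $\mathcal{C}$, infinitude comes from pulling back, and the sibling counts are read off the overlap $\mathcal{C}\cap f(\mathcal{C})$ in the three types --- but two steps you lean on are, respectively, false and missing. The false one is your opening claim that every shortest anchored simple closed curve passes through \emph{both} critical vertices. Proposition \ref{prop:sc} and Lemma \ref{lem:sscc} do not give this: the lemma only produces \emph{one} shortest curve through both critical points. And the claim fails: in Type I the second critical point $c$ satisfies $f(c)\in f(\mathcal{C})\setminus E_0$, so $f(c)\notin\mathcal{C}$ and hence $c\notin f^{-1}(\mathcal{C})$; in particular the anchored loop $\mathcal{C}_1$ of the figure-eight $f^{-1}(\mathcal{C})$, which satisfies $\mathcal{C}_1\cap\mathcal{C}=E_0$, is a shortest anchored curve that misses $c$ entirely. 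Since your entire sibling analysis is phrased through the decomposition $\mathcal{K}=\alpha_{\mathcal{K}}\cup\beta_{\mathcal{K}}$ into two arcs joining the two critical points, it is undefined for most elements of $S$ and has to be rebuilt from scratch.

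The missing step is the one you flag yourself, and it is where the proposition actually lives. First, even the infinitude argument is incomplete: pulling $\mathcal{K}-\Int(E_0)$ back through a branch $(f|_{D_i})^{-1}:\hat\C\setminus P\to D_i$, $P=f(\mathcal{C})$, requires $\mathcal{K}\cap P=E_0$, since the inverse branch does not extend continuously across $P$ (each interior point of $P$ has one preimage on each side of $\mathcal{C}$); this hypothesis fails for $\mathcal{K}=\mathcal{C}$ itself in Types IIA and IIB, and is unverified for deeper curves even in Type I. One must also choose the branch for which the lifted arc ends at $b$ rather than at the other preimage of $a$, as only that branch closes up to an \emph{anchored} loop. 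The paper avoids both issues by describing $f^{-1}(\mathcal{C})$ directly (a figure eight in Types I and IIA, four arcs joining the critical points in Type IIB) and pulling back through explicit complementary regions via the homeomorphisms $f:U_1\to U_0$, $f:U_1\to V_{-1}$, $f:U_{-1}\to V_1$. Second, your propagation heuristic --- that siblings are ``governed entirely by the local overlap at the last stage where a curve differs from its parent'' --- is not correct: a pullback can share a long arc with a curve that is \emph{not} its parent. In Type IIB, write $\mathcal{C}_1=A\cup\beta$ and $\mathcal{C}_{-1}=A\cup\beta'$, where $A$ is the half of $\mathcal{C}$ containing $E_0$; the anchored preimage of $\mathcal{C}_{-1}$ taken inside $\overline{U_1}$ (the paper's $\mathcal{C}_2$) contains $(f|_{\overline{U_1}})^{-1}(A)=E_0\cup(\beta\text{ minus its last edge})$, an arc of length $l$ lying on $\mathcal{C}_1$, while it meets its parent $\mathcal{C}_{-1}$ only in $E_0$. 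So sibling-ship at a given level is created by overlap with curves from earlier generations, and pinning down the exact counts requires the region-by-region bookkeeping that the paper carries out --- bookkeeping that is delicate even there --- not a generic inheritance argument.
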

\begin{proof}
Let $\mathcal{K}$ be a shortest anchored simple closed curve.
By Lemma \ref{lem:cl}, $\mathcal{K}$ is mapped to the critical loop $\mathcal{C}$ by some iterates of $f$.
To prove the there are infinitely many such curves, we will consider three cases.

Case 1: If $f$ is Type I, then $\mathcal{C} \cap f(\mathcal{C}) = E_0$. 
Note that $f$ maps each of the two components of $\hat \C - \mathcal{C}$ homeomorphically to $\hat \C - f(\mathcal{C})$.
Thus, by pulling back, it is easy to verify that $f^{-1}(\mathcal{C})$ is a figure eight curve, and one component, denoted by $\mathcal{C}_1$, is a shortest anchored simple closed curve.
Since $\mathcal{C} \cap f(\mathcal{C}) = E_0$, we have $\mathcal{C} \cap \mathcal{C}_1 = E_0$, so they are not siblings.
Similarly, $f^{-1}(\mathcal{C}_1)$ is a figure eight curve, and contains a shortest anchored simple closed curve, denoted by $\mathcal{C}_1$.
Note that $\mathcal{C}_1$ and $\mathcal{C}_2$ are in two sides of the critical loop $\mathcal{C}$ (see Figure \ref{fig:TypeI}).

Let $U_1$ and $U_2$ be the regions bounded by $\mathcal{C}_1$ and $\mathcal{C}_2$ that do not intersect the critical loop $\mathcal{C}$, and let $U_0$ be the region bounded by $\mathcal{C}$ that does not contain $\mathcal{C}_1$.
Then $f:U_1 \longrightarrow U_0$ and $f: U_2 \longrightarrow U_1$ are homeomorphisms.
Thus, by inductively pulling back, we get a sequence of shortest anchored simple closed curves $\mathcal{C}_n$, where $f(\mathcal{C}_n) = \mathcal{C}_{n-1}$, and none has a sibling.

Case 2: If $f$ is Type IIA, then $E_0 \subsetneq \mathcal{C} \cap f(\mathcal{C}) \subsetneq f(\mathcal{C})$. 
Similar as in Case 1, $f^{-1}(\mathcal{C})$ is a figure eight curve, and contains a shortest anchored simple closed curve, denoted by $\mathcal{C}_1$.
Since $\mathcal{C} \cap f(\mathcal{C})$ contains more than $E_0$, $\mathcal{C}$ and $\mathcal{C}_1$ are siblings (see Figure \ref{fig:TypeIIA}).
Similarly, $f^{-1}(\mathcal{C}_1)$ contains one shortest anchored simple closed curve, denoted by $\mathcal{C}_2$.
Note that $\mathcal{C}_1$ and $\mathcal{C}_2$ are in two sides of the critical loop $\mathcal{C}$. Moreover $\mathcal{C}_1$ and $\mathcal{C}_2$ are not siblings, and neither are $\mathcal{C}$ and $\mathcal{C}_2$.
Then the same pull back argument as in Case 1 gives a sequence of shortest anchored simple closed curves $\mathcal{C}_n$, where $f(\mathcal{C}_n) = \mathcal{C}_{n-1}$, and $\mathcal{C}_n$ has no sibling for all $n \geq 2$.

Case 3: If $f$ is Type IIB, then $\mathcal{C} \cap f(\mathcal{C}) = f(\mathcal{C})$. 
Note that in this case, $\mathcal{C}$ contains both critical values.
Thus, $f^{-1}(\mathcal{C})$ is a union of four simple paths connecting the critical points.
Note that $f$ maps each one of the four components of $\hat \C - f^{-1}(\mathcal{C})$ homeomorphically to a component of $\hat \C - \mathcal{C}$.
It is easy to verify that $f^{-1}(\mathcal{C})$ contains three shortest anchored simple closed curves, one of them is the critical loop.
Denote them by $\mathcal{C}_1, \mathcal{C}_{-1}$ and $\mathcal{C}$.
Therefore, $\mathcal{C}$ has two siblings (see Figure \ref{fig:TypeIIB}).

Let $U_1$ and $U_{-1}$ be the regions bounded by $\mathcal{C}_1$ and $\mathcal{C}_{-1}$ that do not contain the critical loop $\mathcal{C}$, and let $\mathcal{V}_1$ and $\mathcal{V}_{-1}$ be the regions bounded by $\mathcal{C}$ that contains $\mathcal{C}_1$ and $\mathcal{C}_{-1}$ respectively.
Then $f: U_1 \longrightarrow V_{-1}$ and $f: U_{-1} \longrightarrow V_1$ are homeomorphisms.
Thus, by inductively pulling back, we get a sequence of shortest anchored simple closed curves $\mathcal{C}_n, n \in \Z$, where $f(\mathcal{C}_n) = \mathcal{C}_{-\sgn(n)(|n|-1)}$ for $|n| \geq 1$.
Here $\sgn(n)$ represents the sign of $n$, and $\mathcal{C}_0 = \mathcal{C}$.
Moreover, $\mathcal{C}_n$ has two siblings $\mathcal{C}_{n-1}$ and $\mathcal{C}_{n+1}$, and in addition $\mathcal{C}_1$ and $\mathcal{C}_{-1}$ are siblings. There are no other pairs of siblings, which gives the count in the proposition.
\end{proof}

\section{Quasiconformal non-equivalence between quadratic gasket Julia set and limit set}\label{sec:QNE}
In this section, we shall prove Theorem \ref{thm:thmB}.
We shall proceed with proof by contradiction and suppose that there exists a quadratic rational map $f$ whose Julia set is quasiconformally homeomorphic to a geometrically finite gasket limit set.
By Theorem \ref{thm:GFAC}, the contact graph of a geometrically finite gasket limit set is not a tree, and $f$ must have a fat gasket Julia set (see Corollary \ref{cor:GFAC}).
Thus, by Theorem \ref{thm:thmD}, $f$ is a root of a captured type hyperbolic component with an attracting cycle of period $2$.
Since the Julia set of $f$ is homeomorphic to the \pcf center of the corresponding hyperbolic component, Theorem \ref{thm:thmB} follows immediately from the following theorem.

\begin{theorem}
Let $f$ be a \pcf of captured type in $\Per_2(0)$.
Then its Julia set $J$ is not homeomorphic to the limit set of any geometrically finite Kleinian group.
\end{theorem}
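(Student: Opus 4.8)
The plan is to derive a contradiction by playing the combinatorial rigidity of the Fatou graph $\mathcal{G}$ against the homogeneity that a Kleinian group action forces on a contact graph. \emph{Reduction to a canonical edge.} Suppose toward a contradiction that there is a homeomorphism $\Phi\colon J \to \Lambda$ onto the limit set of a geometrically finite Kleinian group $\Gamma$. Since $J$ is a fat gasket, $\Lambda$ is topologically a gasket; as the complementary Jordan domains of a gasket and their tangency pattern are determined by its homeomorphism type, $\Phi$ induces an isomorphism of contact graphs $\Phi_*\colon \mathcal{G} \xrightarrow{\sim} \mathcal{G}_\Lambda$, where $\mathcal{G}_\Lambda$ is the contact graph of $\Lambda$. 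Because $\Lambda$ is then a gasket limit set of a geometrically finite group, Theorem \ref{thm:GFAC} applies: $\mathcal{G}_\Lambda$ is not a tree, $M=\Gamma\backslash\Hyp^3$ is acylindrical, and $M$ is the interior of a \emph{nontrivial} compression body $N$, so its exterior boundary $\partial_eN$ is compressible. Moreover $\Gamma$ acts on $\Omega=\hat{\C}-\Lambda$ permuting components and preserving tangencies, yielding a homomorphism $\rho\colon\Gamma\to\Aut(\mathcal{G}_\Lambda)$ under which $\rho(\gamma)$ fixes a vertex exactly when $\gamma$ stabilizes the corresponding component. The goal is thus to exhibit an automorphism-invariant finite piece of $\mathcal{G}$ and transport it to $\mathcal{G}_\Lambda$.

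\emph{Combinatorial rigidity of $\mathcal{G}$ (the crux).} I would show that the fixed edge $E_0$ is \emph{intrinsically characterized}: it is the unique edge of $\mathcal{G}$ lying on infinitely many shortest cycles. By Proposition \ref{prop:sc} the girth of $\mathcal{G}$ is $2l$ and the shortest cycles are exactly the simple closed curves of length $2l$. On one hand, Proposition \ref{prop:sascc} produces infinitely many shortest anchored simple closed curves, all containing $E_0$, so $E_0$ lies on infinitely many shortest cycles. On the other hand, every shortest cycle is, after finitely many applications of $f$, carried to the critical loop $\mathcal{C}$ (Lemma \ref{lem:tc} and Lemma \ref{lem:cl}), while any edge $e\neq E_0$ has finite pre-period under $f_*$ before landing on $E_0$ (Theorem \ref{thm:dfg}); since pulling back $\mathcal{C}$ one step at a time is finite-to-one, only finitely many shortest cycles can pass through $e$. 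Carrying this finiteness out explicitly in the three cases Type I, IIA, IIB of Proposition \ref{prop:sascc}, where the sibling structure of the shortest anchored curves is completely described, identifies $E_0$ as the unique such edge. Hence $E_0$ is fixed, as a set, by every automorphism of $\mathcal{G}$. (Should the characterization single out not $E_0$ alone but a small finite set of edges, the argument below is unaffected.)

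\emph{Contradiction.} Transporting along $\Phi_*$, the edge $e_0=\Phi_*(E_0)$ will be fixed by every automorphism of $\mathcal{G}_\Lambda$, in particular by $\rho(\Gamma)$. Thus $\Gamma$ preserves the unordered pair of components $\{\Omega_1,\Omega_2\}$ meeting at the tangency point associated to $e_0$, so the index-at-most-two subgroup $\Gamma_0\leq\Gamma$ fixing $\Omega_1$ has $\Omega_1$ as an invariant component. As $\Omega_1$ is a Jordan domain on which $\Gamma_0$ acts freely and properly discontinuously, $\pi_1(\Gamma_0\backslash\Omega_1)\cong\Gamma_0\cong\pi_1(M_0)$ for $M_0=\Gamma_0\backslash\Hyp^3$; this $\pi_1$-isomorphism from a single boundary surface forces $M_0$ to be an interval bundle, hence boundary-incompressible. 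But $M_0$ finitely covers $M$, and compressing disks for $\partial_eN$ lift to compressing disks upstairs, so $M_0$ has compressible exterior boundary — a contradiction. Therefore no homeomorphism $\Phi$ exists, and $J$ is not homeomorphic to the limit set of any geometrically finite Kleinian group.

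\emph{Main obstacle.} The delicate step is the combinatorial characterization of $E_0$ in the second paragraph, namely proving uniformly over the three types that every edge other than $E_0$ meets only finitely many shortest cycles while $E_0$ meets infinitely many. This is precisely where the fine structure developed in \S\ref{sec:CT} — the critical loop, the critical distance $l$, and the classification of siblings of shortest anchored simple closed curves — is indispensable. By contrast, the first and last paragraphs are soft, relying only on Theorem \ref{thm:GFAC} and standard facts about invariant components of Kleinian groups and the persistence of compressibility under finite covers.
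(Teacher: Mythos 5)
The fatal problem is the claimed intrinsic characterization of $E_0$, and it is not a fixable technicality: the characterization is false. Let $a_0=a$ be the critical vertex and recall $f(a_0)=b_0$, $f(a_1)=a_0=f(b_0)$, so the edge $E_0=[a_0,b_0]$ has exactly two preimage edges, $E_0$ itself and $e_1=[a_0,a_1]$. Every shortest anchored curve $\mathcal{C}_{n-1}$ of Proposition~\ref{prop:sascc} contains $E_0$, hence contains the critical value $b_0$, so $f^{-1}(\mathcal{C}_{n-1})$ is a figure eight pinched at $a_0$ (or, when the other critical value also lies on it, a union of four arcs joining the two critical vertices). In either case, besides the anchored loop $\mathcal{C}_n$, this preimage contains a second simple closed curve of length $2l$ passing through $e_1$: both loops of the figure eight have length exactly $2l$ by Proposition~\ref{prop:sc}, and $E_0$ and $e_1$ cannot lie on the same loop, since $f$ restricted to a loop of length $2l$ mapping onto the $2l$-cycle $\mathcal{C}_{n-1}$ is injective on edges and both map to $E_0$. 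These non-anchored curves are pairwise distinct (their $f$-images $\mathcal{C}_{n-1}$ are distinct), so the edge $e_1\neq E_0$ also lies on infinitely many shortest cycles. This pinpoints the error in your finiteness argument: the pre-period of an edge does \emph{not} bound the number of pull-back steps a shortest cycle through it needs to reach $\mathcal{C}$ — here $e_1$ has pre-period $1$, while the curve through $e_1$ in $f^{-n}(\mathcal{C})$ needs $n$ iterates, because once the forward image of a cycle contains the fixed edge $E_0$ it can remain an anchored shortest cycle for arbitrarily many further iterates. Your hedge (a finite invariant set of edges rather than $E_0$ alone) does not rescue the proof either: the set of edges lying on infinitely many shortest cycles is $\Aut(\mathcal{G})$-invariant and contains $e_1$, and under your own standing hypothesis Lemma~\ref{lem:pfp} provides a parabolic-induced $K\cong\Z\le\Homeo(\mathcal{G})$ fixing $E_0$ that moves every vertex other than $\partial E_0$ along infinite orbits; hence this invariant edge set is infinite, and its invariance places no restriction whatsoever on $\Gamma$.

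This is precisely why the paper takes a different route: it never tries to make $E_0$ canonical for the full automorphism group. It uses only the subgroup $K\cong\Z$ fixing $E_0$ given by Lemma~\ref{lem:pfp}, restricts attention to shortest \emph{anchored} curves and their sibling structure (Proposition~\ref{prop:sascc}) — which disposes of Types IIA and IIB, since there the critical loop becomes a finite $K$-invariant object while nontrivial elements of $K$ move every vertex off $\partial E_0$ — and then, for Type I, where no anchored curve has a sibling and this invariant says nothing, it runs the genuinely hard argument: the $K$-symmetry forces the gap subgraph $\mathcal{G}\cap R_0$ to admit an orientation-preserving symmetry exchanging $a$ and $b$, which Proposition~\ref{lem:gap} excludes via the $R_0$-arc counting. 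Your proposal contains no substitute for this Type I analysis, which is exactly the case your (false) characterization was meant to cover. One smaller remark: your final step, while essentially sound, is much heavier than necessary — if a finite-index subgroup of $\Gamma$ stabilized a single component $\Omega_1$, then since finite-index subgroups have the same limit set and the boundary of an invariant component equals the limit set, $\Lambda=\partial\Omega_1$ would be a Jordan curve, contradicting gasket-ness with no need for interval bundles or lifted compressing disks.
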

\begin{proof}
Suppose for contradiction that $J$ is homeomorphic to the limit set $\Lambda$ of some geometrically finite Kleinian group.
Then the Fatou graph $\mathcal{G}$ of $f$ is homeomorphic to the contact graph of $\Lambda$.

Let $E_0 = [a,b]$ be the unique fixed edge of $\mathcal{G}$ by the induced map of $f$, where $a$ is a critical point of $f$.
By Lemma \ref{lem:pfp}, there exists a subgroup $K \subseteq \Homeo(\mathcal{G})$ isomomorphic to $\Z$ that fixes $E_0$.
Note that any element $g \in K$ must send a shortest anchored simple closed curve to another one.
Note that $g \in K$ preserves sibling-ship.
Thus, by Proposition \ref{prop:sascc}, if $f$ is Type IIA or Type IIB, any element $g \in K$ must fix the critical loop, which is a contradiction.

Therefore, it remains to consider the case that $f$ is Type I.
Note that we can label the shortest anchored simple closed curves by $\mathcal{C}_n, n\geq 0$, where $\mathcal{C}_0 = \mathcal{C}$ is the critical loop, and $f: \mathcal{C}_{i+1} \longrightarrow \mathcal{C}_i$ (see Figure \ref{fig:TypeI}).

Note that $\mathcal{C}_i$ divides the plane into infinitely many crescent shaped region, which we call gaps.
The boundary of any gap contains $\{a,b\} = \partial E_0$.
Denote the region bounded by $\mathcal{C}_0$ and $\mathcal{C}_1$ by $R_0$.
Inductively, we define $R_n, n \in \Z$ as the adjacent gap to $R_{n-1}$ in the counterclockwise direction viewed at the critical point $a$. 

Note that if $n\geq 1$, then $f: R_n \longrightarrow R_{-n}$ is a homeomorphism, and if $n \leq -2$, then $f: R_n \longrightarrow R_{-n-1}$ is a homeomorphism.
Thus, $f$ induces an isomorphism between $\mathcal{G} \cap R_n$ and $\mathcal{G} \cap R_{-n}$ (or $\mathcal{G} \cap R_n$ and $\mathcal{G} \cap R_{-n-1}$ respectively).
Therefore, for $n \geq 2$, $f^2: R_n \longrightarrow R_{n-1}$ is a homeomorphism fixing $a,b$, and for $n \leq -2$, $f^2: R_n \longrightarrow R_{n+1}$ is a homeomorphism fixing $a,b$.
Moreover, $f^2$ induces an isomorphism on the corresponding subgraphs.

Since there exists a subgroup $K \subseteq \Homeo(\mathcal{G})$ isomomorphic to $\Z$ that fixes $E_0$, the subgraph in any gap $R_k$ is isomorphic to $\mathcal{G} \cap R_n$ for some sufficiently large $n$.
By the observation in the previous paragraph, we thus conclude that any the subgraph in any two gaps are homeomorphic, and the homeomorphism can be chosen fixing $a,b$.
In particular, there exist homeomorphisms $g: R_0 \longrightarrow R_1$ and $h: R_{-1} \longrightarrow R_0$ so that
\begin{itemize}
    \item $g(a) = a, g(b) = b$ and $h(a) = a, h(b) = b$;
    \item $g, h$ induce isomorphisms between of the corresponding subgraphs.
\end{itemize}
Therefore, by considering $\tau:= h \circ f \circ g: R_0 \longrightarrow R_0$, we conclude that the subgraph $\mathcal{G} \cap R_0$ is symmetric under an orientation-preserving map that interchanges $a$ and $b$.
The proof of theorem is complete with the following proposition.
\end{proof}

\begin{prop}\label{lem:gap}
Let $f$ be a Type I \pcf of captured type in $\Per_2(0)$.
Then the subgraph $\mathcal{G} \cap R_0$ is not symmetric under an orientation-preserving map $\tau$ that interchanges $a$ and $b$.
\end{prop}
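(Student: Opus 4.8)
Recall the structure from Figure \ref{fig:TypeI}: for a Type I map, $\mathcal{C}\cap f(\mathcal{C})=E_0$, the critical loop $\mathcal{C}=\mathcal{C}_0$ passes through both critical points $a,b$, and the shortest anchored simple closed curves $\mathcal{C}_n$, $n\geq 0$, all contain $\{a,b\}$ and are nested on one side, with $f:\mathcal{C}_{i+1}\to\mathcal{C}_i$. The gap $R_0$ is the crescent bounded by $\mathcal{C}_0$ and $\mathcal{C}_1$, and its boundary meets $\{a,b\}$. I want to rule out an orientation-preserving self-homeomorphism $\tau$ of the subgraph $\mathcal{G}\cap R_0$ that swaps $a$ and $b$.

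The plan is to extract a \emph{discrete orientation-based invariant} of the vertices $a$ and $b$ inside $R_0$ that $\tau$ cannot preserve. The natural candidate is the cyclic order in which edges of $\mathcal{G}\cap R_0$ emanate from $a$ versus from $b$, read off from the planar embedding. Concretely, first I would describe precisely how the two critical points sit on the boundary loops $\mathcal{C}_0,\mathcal{C}_1$: along each anchored curve $\mathcal{C}_n$ the two halves have length $l$ (the critical distance), and $a,b$ are the two points where $\mathcal{C}_0$ and $\mathcal{C}_1$ are pinched together. The key asymmetry I expect to exploit is the local dynamics: near $a$ (the \emph{periodic} critical point, fixed up to the period-$2$ swap) the map $f$ behaves like $z\mapsto z^2$, folding one side of the crescent onto the other, whereas near $b$ the combinatorial picture is governed by the preimage structure. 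Because $f$ maps $R_1\to R_{-1}$ and $R_{-1}\to R_0$ homeomorphically (using the gap dynamics established just before the proposition), I can compare the induced edge-patterns at $a$ and at $b$ and show they are genuinely different as cyclically ordered sets of germs.

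More precisely, I would argue as follows. Suppose such a $\tau$ exists. Then $\tau$ carries the germ of $\mathcal{G}\cap R_0$ at $a$ to the germ at $b$, preserving the cyclic (planar) order since $\tau$ is orientation-preserving. I would then count, for the vertex $a$, the number of edges of $\mathcal{G}\cap R_0$ incident to it, together with their arrangement relative to the two boundary arcs $\mathcal{C}_0\cap\overline{R_0}$ and $\mathcal{C}_1\cap\overline{R_0}$; and do the same for $b$. Using that $a$ lies in a critical cycle while $b$ plays the dual role, and using the homeomorphism $f:R_0\to R_{-1}$ (respectively $f^2$ shifting gaps) to pull back and identify these local pictures recursively, I expect to find a numerical discrepancy — for instance, the valence of $a$ inside $R_0$ differs from the valence of $b$, or the two incident boundary arcs attach to $a$ and to $b$ with opposite handedness. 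Either discrepancy is preserved by any orientation-preserving homeomorphism of the pair $(R_0,\mathcal{G}\cap R_0)$, contradicting the existence of $\tau$.

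\textbf{Main obstacle.} The delicate step is making the local picture at $a$ and $b$ fully explicit and \emph{rigid}: I must show that whatever invariant distinguishes $a$ from $b$ is genuinely a property of the abstract planar subgraph $\mathcal{G}\cap R_0$ and not an artifact of the embedding, so that it transfers across any homeomorphism rather than just across conjugacies by $f$. This requires carefully tracking how the infinitely many pulled-back loops $\mathcal{C}_n$ accumulate in $R_0$ and how they attach to $a$ versus $b$; the recursion $f:\mathcal{C}_{n+1}\to\mathcal{C}_n$ gives a self-similar structure, and I expect the cleanest route is to identify a single combinatorial feature (such as the parity or ordering of subtrees hanging off $a$ compared to those off $b$) that the $z^2$-folding at the periodic critical point forces to be asymmetric. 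Verifying that this feature is orientation-sensitive — so that an orientation-\emph{preserving} swap is impossible even though an orientation-reversing one might exist — is where the argument must be most careful.
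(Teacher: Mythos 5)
Your proposal does not yet contain a proof: at the decisive point you write that you ``expect to find a numerical discrepancy'' between the local pictures at $a$ and at $b$, but no such discrepancy is exhibited, and the specific candidates you name would fail. The valences of $a$ and of $b$ in $\mathcal{G}\cap R_0$ are both countably infinite: the touching points on $\partial U_a$ are dense in $\partial U_a$ (they are iterated preimages, under the degree-two boundary map, of the touching fixed point), so the arc of $\partial U_a$ facing $R_0$ already meets infinitely many Fatou components inside $R_0$, and likewise for $b$. The visible local combinatorics are identical as well: at $a$ the two boundary edges of $\overline{R_0}$ are $[a_0,a_1]$ and $[a_0,b_{2l-2}]$, at $b$ they are $[b_0,b_1]$ and $[b_0,a_{2l-2}]$, with infinitely many interior edges in between in both cases. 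Your one correct structural observation --- that an orientation-preserving swap of $a$ and $b$ must exchange the two boundary arcs $\mathcal{C}_0\cap\overline{R_0}$ and $\mathcal{C}_1\cap\overline{R_0}$ --- yields nothing by itself, since both arcs have length $2l-1$ and isomorphic-looking neighborhoods. Finally, the heuristic that the $z\mapsto z^2$ folding at the periodic critical point forces a local asymmetry cannot work inside $R_0$: the critical points are the vertices $a_0$ and $a_l$, which lie on $\mathcal{C}_0$, and $f$ is injective on $\Int R_0$; the degree-two collapsing of edge germs at $a_0$ onto those at $b_0$ is only seen in the full graph $\mathcal{G}$, whereas $\tau$ is merely required to be a symmetry of $\mathcal{G}\cap R_0$ --- it need not commute with $f$ nor extend to $\mathcal{G}$. (A smaller misreading: $f$ does not map $R_{-1}$ homeomorphically onto $R_0$; the gap dynamics give $f\colon R_n\to R_{-n}$ for $n\geq 1$ and $f\colon R_n\to R_{-n-1}$ for $n\leq -2$, and the maps $R_0\to R_1$ and $R_{-1}\to R_0$ appearing in the theorem are the hypothesized graph symmetries $g,h$, not $f$.)

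By contrast, the paper's proof is not a local-invariant argument but a global dynamical counting argument, and the difference is essential. One considers $R_0$-arcs, i.e.\ local geodesics in $\overline{R_0}$ joining the \emph{other} critical vertex $a_l$ to $a_0$; post-critical finiteness gives expansion, hence only finitely many local geodesics of any bounded length. Lemma \ref{lem:el} produces such an arc whose $f$-lift connects $b_l$ to $a_1$; one then forms the finite set $A$ of all $R_0$-arcs from $a_l$ to $a_0$ of length at most the minimal such length $N$, sets $B=\tau(A)$ (arcs from $b_l$ to $b_0$), and shows (Lemma \ref{lem:sm}, using Proposition \ref{prop:sc} and length estimates) that $f$ maps $B$ injectively into $A$. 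Since $|A|=|B|<\infty$, this map is a bijection; but the distinguished arc of Lemma \ref{lem:el} has its lift through $b_l$ ending at $a_1\neq b_0$, so by uniqueness of path lifting it cannot equal $f(\beta)$ for any $\beta\in B$ --- a contradiction. The contradiction thus arises from the interaction of $\tau$ with $f$ and with path lifting, not from any intrinsic, embedding-visible difference between the germs of $\mathcal{G}\cap R_0$ at $a$ and at $b$. Your plan looks for exactly such an intrinsic difference; nothing in the paper suggests one exists, and establishing one would in any case be essentially as hard as the proposition itself.
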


The rest of the section is dedicated to the proof of this proposition.
We shall suppose by contradiction that there exists such a symmetry $\tau$.

We first set up some notations.
Denote the boundary of $E_0$ by $a_0 = a$ and $b_0 = b$, where the critical point corresponds to $a_0$.
Label the vertices on the critical loop $\mathcal{C}_0$ by $a_0, a_1,..., a_{2l-2}, b_0$, and the vertices on the loop $\mathcal{C}_1$ by $b_0, b_1,..., b_{2l-2}, a_0$ (see Figure \ref{fig:R1}).
Note that we have the following dynamics
\begin{itemize}
    \item $f(b_i) = a_i$;
    \item $f(a_1) = a_0 = f(b_0)$;
    \item $f(a_0) = b_0$.
\end{itemize}
We also remark that the two critical points of $f$ are $a_0$ and $a_l$.

\begin{figure}[ht]
  \centering
  \resizebox{0.8\linewidth}{!}{
    \def\svgwidth{\columnwidth}
    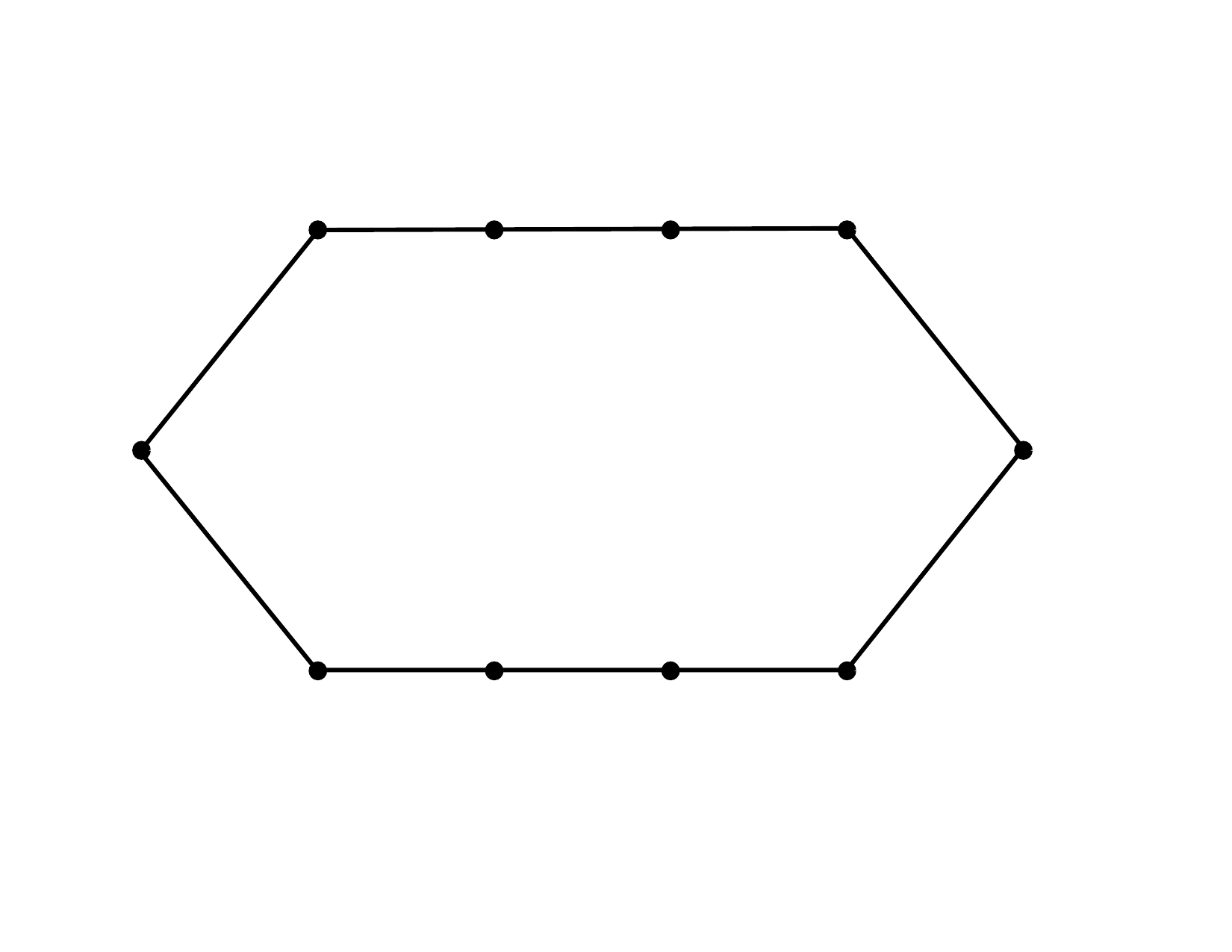

  }
  \caption{An illustration of the gap $R_0$.}
  \label{fig:R1}
\end{figure}

A path $\gamma = [x_0,..., x_{k}]$ in $\mathcal{G}$ is called a {\em local geodesic} if $x_i, x_j$ are not adjacent in $\mathcal{G}$ if $|i-j| \geq 2$.
Note that in particular, if $x, y$ are adjacent in $\mathcal{G}$, then there are no other local geodesics other than the edge $[x,y]$.
We also remark that given any path $\gamma$, one can construct a local geodesic by replacing $[x_i, x_{i+1},..., x_j]$ with $[x_i, x_j]$ if $x_i, x_j$ are adjacent in $\mathcal{G}$.

Since $f$ is post-critically finite, using the expansion of the dynamics away from the post-critical set, it is easy to show that for any pair of vertices $x, y$, there are only finitely many local geodesics of length $k$.

We define an {\em $R_0$-arc} as a local geodesic $\alpha$ that connects two boundary vertices in $\partial R_0$ so that
\begin{itemize}
    \item $\alpha \subseteq \overline{R_0}$;
    \item $\Int \alpha \cap \mathcal{{C}}_0 = \emptyset$.
\end{itemize}

We first prove that
\begin{lem}\label{lem:el}
There exists an $R_0$-arc that connects $a_l$ to $a_0$ so that it has a lift under $f$ that connects $b_l$ and $a_1$.
\end{lem}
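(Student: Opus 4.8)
The plan is to build the arc $\alpha$ and its lift $\widetilde\alpha$ together, by choosing a path through the gap $R_0$ and pulling it back through the single-valued branch of $f^{-1}$ over the correct side of the critical loop. Let $D$ be the component of $\hat{\C}\setminus\mathcal{C}_0$ containing $R_0$ and $\mathcal{C}_1\setminus E_0$. By the fold description of $f$ on $\mathcal{C}_0$ (it maps each component of $\hat{\C}\setminus\mathcal{C}_0$ homeomorphically onto $\hat{\C}\setminus f(\mathcal{C})$), the restriction $f|_D\colon D\to\hat{\C}\setminus f(\mathcal{C})$ is a homeomorphism, and I would take $\widetilde\alpha:=(f|_D)^{-1}(\alpha)$. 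The point of lifting through $D$ is that $b_l\in\mathcal{C}_1\setminus E_0\subseteq D$ (here $l\ge 2$, so $b_l\notin E_0$) and $f(b_l)=a_l$; hence $b_l$ is forced to be the unique $D$-preimage of $a_l$, giving one endpoint of $\widetilde\alpha$ for free.

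Next I would choose $\alpha$. Both $a_l$ and $a_0$ lie on $\mathcal{C}_0\subseteq\partial R_0$, so I take $\alpha$ to be a local geodesic from $a_l$ to $a_0$ whose interior lies in a single component of $R_0\setminus f(\mathcal{C})$; such a path exists since $\mathcal{G}\cap R_0$ is connected. By construction $\Int\alpha\cap\mathcal{C}_0=\emptyset$ and $\Int\alpha\cap f(\mathcal{C})=\emptyset$, so $\alpha$ is an $R_0$-arc lying in $\overline{\hat{\C}\setminus f(\mathcal{C})}$ and $\widetilde\alpha$ is well defined. Since $\Int\alpha\subseteq R_0\subseteq D$, the lift approaches $a_l$ from inside $D$ and therefore terminates at $b_l$, as intended.

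It remains to identify the other endpoint of $\widetilde\alpha$, the lift-limit over $a_0$. Here $a_0\in E_0=\mathcal{C}\cap f(\mathcal{C})$ is simultaneously a fold point of $\mathcal{C}_0$ and a point of the slit $f(\mathcal{C})$, and its two $f$-preimages are exactly $a_1$ and $b_0$ (neither $a_0$ nor $a_l$ is a critical value: $a_l$ because it is strictly pre-periodic, and $a_0$ because $f(\mathcal{C})$ is a simple path with $f(a_1)=a_0$ already used as its second vertex). The two preimages sit on opposite sheets of the fold at $a_0$, so which of $a_1,b_0$ is approached by $\widetilde\alpha$ is determined by the component of $R_0\setminus f(\mathcal{C})$ in which $\Int\alpha$ was chosen. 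I would pin this down by a local analysis of $f$ near $a_0$ (where $f$ behaves like $z\mapsto z^2$) together with the relations $f(a_1)=a_0$ and $f(b_0)=a_0$, and select the component so that the lift limits to $a_1$ rather than $b_0$. This yields a lift $\widetilde\alpha$ joining $b_l$ to $a_1$ with $f(\widetilde\alpha)=\alpha$, proving the lemma.

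The hard part will be exactly this last endpoint bookkeeping: because $a_0$ lies on the fold locus and on the critical-value arc $f(\mathcal{C})$ at once, separating the preimage $a_1$ from $b_0$ requires a careful reading of how $f(\mathcal{C})$ sits relative to $\mathcal{C}_0$ and $\mathcal{C}_1$ near $a_0$, which is precisely where the Type I hypothesis $\mathcal{C}\cap f(\mathcal{C})=E_0$ enters. One must also confirm that the chosen component of $R_0\setminus f(\mathcal{C})$ genuinely admits a path from $a_l$ to $a_0$, i.e.\ that these two vertices are mutually visible across it; this uses the connectivity of $\mathcal{G}\cap R_0$ and the fact that $f(\mathcal{C})$ meets $\mathcal{C}_0$ only in $E_0$.
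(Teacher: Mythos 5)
Your setup—lifting through the branch $(f|_D)^{-1}$ over the side $D$ of the critical loop, and identifying $b_l$ as the unique $D$-preimage of $a_l$—agrees with what the paper does implicitly, but both steps that carry the actual content of the lemma have genuine gaps. The first is the existence of the arc. You assert that a local geodesic from $a_l$ to $a_0$ whose interior avoids $f(\mathcal{C})$ exists ``since $\mathcal{G}\cap R_0$ is connected.'' This is a non sequitur: the slit $f(\mathcal{C})$ is itself a subcomplex of $\mathcal{G}$ sitting inside $\overline{R_0}$, and connectivity of $\mathcal{G}\cap R_0$ produces paths from $a_l$ to $a_0$ but gives no control on whether every such path is forced through vertices of the slit. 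This is exactly the hard point of the paper's proof: there, one takes a \emph{shortest} local geodesic $\gamma$ from $a_l$ to a vertex $b_i$ of $\mathcal{C}_1$ with interior in $\Int R_0$ and no interior vertex adjacent to $a_0$, and proves the claim $\Int\gamma\cap f(\mathcal{C}_0)=\emptyset$ by an exchange argument: a competitor hitting the slit is truncated, lifted under $f|_{R_0}$ to an arc from $b_l$ to some $a_j$, and then carried by the symmetry $\tau$ to a strictly shorter admissible arc from $a_l$ to $b_j$, contradicting minimality. Here $\tau$ is the orientation-preserving symmetry whose existence is the standing reductio hypothesis of Proposition \ref{lem:gap}; the lemma is stated, and proved in the paper, only under that hypothesis. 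Your argument never invokes $\tau$ at all, which is a strong sign that the missing step cannot be filled in by the route you propose.

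The second gap is the endpoint bookkeeping, which you yourself flag as ``the hard part'': it is both deferred and misframed. The set $R_0\setminus f(\mathcal{C})$ is connected—it is an open disk minus an arc having exactly one endpoint ($a_0$) on the boundary—so there is no ``component of $R_0\setminus f(\mathcal{C})$'' to select; the dichotomy $a_1$ versus $b_0$ is decided by the side of the slit along which $\alpha$ lands at $a_0$, and a purely local analysis at $a_0$ cannot decide it, because the two preimage sheets are locally indistinguishable; one needs global information on how $\alpha$ sits relative to the slit. The paper obtains this from a separation/monodromy argument: since $\Int\alpha$ misses the slit, the closed loop $\alpha\cup[a_0,a_1]\cup\dots\cup[a_{l-1},a_l]$ separates the two critical values $b_0=f(a_0)$ and $f(a_l)$, so the monodromy of the degree-two branched cover around this loop is nontrivial; as $[a_0,\dots,a_l]$ lifts to $[b_0,\dots,b_l]$, the lift of $\alpha$ starting at $b_l$ must terminate at $a_1$. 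Without this (or an equivalent global input), your construction cannot exclude that the lift ends at $b_0$, and the lemma is not proved.
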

\begin{proof}
By pulling back the critical loops using $f$ and taking the associated local geodesic, one can show that there exists some local geodesic $\gamma$ so that
\begin{itemize}
    \item $\Int \gamma \subseteq \Int R_0$;
    \item $\gamma$ connects $a_l$ to $b_i$ for some $i =1,..., 2l-2$;
    \item no interior vertex of $\gamma$ is adjacent to $a_0$.
\end{itemize}
Let $\gamma$ be the one with the shortest length, and suppose that $\gamma$ connects $a_l$ with $b_i$.

We claim that $\Int \gamma \cap f(\mathcal{C}_0) = \emptyset$.
Suppose not. Then we there exists a strictly shorter arc $\delta$ connecting $a_l$ to $v \in f(\mathcal{C}_0)$.
Note that $v \notin \{a, b\}$.
Since $f$ maps the gap $R_0$ homeomorphically to its image, we can find a lift $\delta'$ that connects $b_l$ and $a_j$ for some $j =1,..., 2l-2$.
Then $\tau(\delta')$ is a local geodesic that connects $a_l$ to $b_j$ with $\Int \tau(\delta') \subseteq \Int R_0$ and no interior vertex of $\tau(\delta')$ is adjacent to $a_0$.
This is a contradiction to the minimality of $\gamma$.

It is easy to verify that $R_0$ contains $f(a_l)$.
Let $\alpha = \gamma \cup [b_i, b_{i+1}] \cup ... \cup [b_{2l-2}, a_0]$.
Since no interior vertex of $\gamma$ is adjacent to $a_0$, we have that $\alpha$ is a local geodesic.
Thus, $\alpha$ is an $R_0$-arc that connects $a_l$ and $a_0$.
By the claim, the closed loop
$$
\alpha \cup [a_0, a_1] \cup... \cup [a_{l-1}, a_l]
$$
separates the critical values $f(a_l)$ and $b_0 = f(a_0)$ (see Figure \ref{fig:R1}).
Since the arc $[a_0, a_1] \cup... \cup [a_{l-1}, a_l]$ has a lift connecting $b_l$ and $b_0$, we conclude that the arc $\alpha$ has a lift connecting $b_l$ and $a_1$.
\end{proof}

Let $N$ be the length of the shortest $R_0$-arc that connects $a_l$ to $a_0$ with a lift under $f$ that connects $b_l$ and $a_1$.
Let $A$ be the collection of all $R_0$-arc that connects $a_l$ to $a_0$ of length $\leq N$.
Let $B:= \{\tau(\alpha): \alpha\in A\}$.
Note that $A$ and $B$ are finite sets with the same cardinality.

Let $K$ be the length of the shortest local geodesic $\gamma$ as in the proof of Lemma \ref{lem:el}. Then the proof of Lemma \ref{lem:el} gives that 
\begin{align}\label{eq:bd}
    N \leq K + 2l-1.
\end{align}

We will now prove
\begin{lem}\label{lem:sm}
Let $\beta \in B$. Then $f(\beta) \in A$.
\end{lem}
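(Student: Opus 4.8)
The statement asserts that the image map $f$ sends elements of $B$ into $A$. Recall $A$ consists of the $R_0$-arcs connecting $a_l$ to $a_0$ of length $\leq N$, and $B = \{\tau(\alpha) : \alpha \in A\}$, where $\tau$ is the hypothetical orientation-preserving symmetry of $\mathcal{G} \cap R_0$ interchanging $a$ and $b$. Since $\tau$ is an isomorphism of the subgraph that swaps $a_0 = a$ and $b_0 = b$, each $\beta = \tau(\alpha) \in B$ is an $R_0$-arc connecting $\tau(a_l)$ to $\tau(a_0) = b_0$. The first task is to understand where $\tau(a_l)$ sits: since $a_l$ is the critical vertex on $\mathcal{C}_0$ antipodal to $a_0$ along the critical loop, and $\tau$ preserves the critical loop structure (it must, being a symmetry that fixes $E_0$ setwise and permutes the shortest anchored simple closed curves), I expect $\tau(a_l) = a_l$ or at least that $\tau(a_l)$ is the other critical vertex. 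The key point is that $f$ is two-to-one near $a_l$, so $\beta$ starting at a critical vertex will be folded by $f$.

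\textbf{Key steps.} First I would pin down the endpoints of $f(\beta)$. Using the dynamics $f(b_i) = a_i$, $f(a_0) = b_0$, and the fact that $\beta$ connects a critical vertex to $b_0$, I compute that $f(\beta)$ connects $a_l$ (the image of the critical vertex under the relevant branch) to $f(b_0) = a_0$; thus $f(\beta)$ has the correct endpoints to potentially lie in $A$. Second, I would verify that $f(\beta)$ is genuinely an $R_0$-arc: it must lie in $\overline{R_0}$ with interior disjoint from $\mathcal{C}_0$, and it must be a local geodesic. The containment in $\overline{R_0}$ follows because $f$ maps the gaps $R_n$ homeomorphically onto gaps (as established in the proof preceding the proposition, e.g.\ $f^2 : R_n \to R_{n-1}$ is a homeomorphism fixing $a, b$); I would trace $\beta$'s gap under $f$ and use that $\tau(\alpha)$ lives in the correct region. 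That $f(\beta)$ is a local geodesic, or can be replaced by the associated local geodesic without increasing length, follows since $f$ is simplicial and hence non-expanding on the edge metric. Third, and most importantly, I must control the length: I would use the length bound $N \leq K + 2l - 1$ from \eqref{eq:bd} together with the fact that $f$ does not increase the length of a local geodesic (indeed $f$ typically decreases it, and near the critical vertex $a_l$ the folding can only shorten), to conclude $\text{length}(f(\beta)) \leq N$, placing $f(\beta)$ in $A$.

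\textbf{Main obstacle.} The crux will be the length estimate together with showing $f(\beta)$ does not cross $\mathcal{C}_0$ in its interior. The endpoint and simpliciality bookkeeping is routine, but ensuring the image arc's interior avoids $\mathcal{C}_0$ requires care: applying $f$ to an arc whose interior lies in $\text{Int}\,R_0$ could in principle produce an arc touching the critical loop, since $f$ maps parts of $\partial R_0$ onto $\mathcal{C}_0$. I would handle this by invoking the homeomorphism property of $f$ on the relevant gap, which guarantees that interior points of $\beta$ (lying in $\text{Int}\,R_0$) map to interior points of the image gap and hence stay off $\mathcal{C}_0$, possibly after passing to the associated local geodesic as described in the text. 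The delicate interplay is that $\beta$ begins at a critical vertex, so the homeomorphism statement applies on each side of that vertex separately; I expect to split $\beta$ at its critical endpoint and argue on the single branch that actually maps into $\overline{R_0}$, which is exactly the configuration that the preceding lemmas (particularly the lift statement in Lemma \ref{lem:el}) are designed to supply.
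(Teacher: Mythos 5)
Your checklist of what must be verified (endpoints, containment in $\overline{R_0}$, interior disjoint from $\mathcal{C}_0$, local geodesicity, length) matches the paper's, but your arguments for the two hard items rest on false statements, and you locate the difficulty in the wrong place. A preliminary correction: $\tau$ swaps the two boundary arcs of $R_0$, so $\tau(a_l)=b_l$, which is \emph{not} a critical vertex; thus $\beta$ connects $b_l$ to $b_0$, no folding occurs at the endpoints of $\beta$, and the endpoints of $f(\beta)$ are $f(b_l)=a_l$ and $f(b_0)=a_0$. Under your stated expectation $\tau(a_l)=a_l$ the computation would come out wrong, since $f(a_l)$ is the second critical value, which is not $a_l$ (nor even a vertex of $\mathcal{C}_0$). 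Also, the length bound you call the crux is the trivial part: $f$ maps edges to edges, so $l(f(\beta))=l(\beta)\leq N$ immediately.

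The fatal gap is the containment $f(\beta)\subseteq\overline{R_0}$. It does \emph{not} follow from ``$f$ maps gaps homeomorphically onto gaps'': those statements hold only for $R_n$ with $n\geq 1$ or $n\leq -2$ (and the $f^2$ statements for $|n|\geq 2$), and $R_0$ is precisely the exceptional gap. In fact $f$ maps $R_0$ homeomorphically onto the entire component of $\hat\C-\mathcal{C}_0$ containing $\mathcal{C}_1-E_0$, minus a slit; this image contains $U_1$ and hence infinitely many gaps, so nothing formal prevents $f(\beta)$ from exiting $\overline{R_0}$ across $\mathcal{C}_1$. Ruling this out is the heart of the paper's proof: assuming $f(\beta)\nsubseteq\overline{R_0}$, one decomposes $f(\beta)=\gamma_1\cup\gamma_2\cup\gamma_3$ with exit and re-entry vertices $b_i,b_j$ on $\mathcal{C}_1$, proves the claim $l(\gamma_1)\geq K$ (whose proof uses the symmetry $\tau$ once more, via a truncation $\widetilde{\beta_1}=\beta_1'\cup[x,a_1]$ and the minimality defining $K$), invokes Proposition \ref{prop:sc} to get $l(\gamma_2),l(\gamma_3)\geq l$, and derives $l(\beta)\geq K+2l>N$, contradicting Equation \ref{eq:bd}; you cite Equation \ref{eq:bd} but for a purpose it cannot serve. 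Similarly, local geodesicity of $f(\beta)$ is not ``routine since $f$ is simplicial'': simplicial maps do not preserve local geodesics (non-adjacent vertices may map to adjacent ones), and you cannot simply replace $f(\beta)$ by its associated local geodesic, because the lemma --- and the injectivity-plus-cardinality count in Proposition \ref{lem:gap} that it feeds --- needs $f(\beta)$ itself to lie in $A$. The paper instead argues that if $\delta$ were a strictly shorter geodesic replacement, the loop obtained by concatenating $f(\beta)$ and $\delta$ would separate the two critical values $b_0$ and $f(a_l)$, so $\delta$ would be an $R_0$-arc of length $<N$ admitting a lift connecting $b_l$ and $a_1$, contradicting the minimality of $N$. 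None of these mechanisms appears in your proposal.
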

\begin{proof}
Since $\beta$ is an arc connecting $b_l$ and $b_0$, $f(\beta)$ is an arc connecting $a_l$ and $a_0$.

We will now verify that $f(\beta)$ satisfies the definition of an $R_0$-arc.
Since $\beta = \tau(\alpha)$ for some $\alpha \in A$, $\Int \beta \cap \mathcal{C}_1 = \emptyset$.
Thus, $\Int f(\beta) \cap \mathcal{C}_0 = \emptyset$.

Suppose that $f(\beta) \nsubseteq \overline{R_0}$.
Since $\beta$ connects $b_l$ to $b_0$ in $R_0$, and $f$ sends locally the region bounded between $[b_1, b_0]$ and $[b_0, a_{2l-2}]$ in $R_0$ to the region bounded by $[a_1, a_0]$ and $[a_0, f(a_{2l-2})]$ in $R_0$ (see Figure \ref{fig:R1}), we can decompose $f(\beta)$ as
$f(\beta) = \gamma_1 \cup \gamma_2 \cup \gamma_3$
where $\gamma_1$ connects $a_l$ to $b_i$ for some $i$, $\gamma_2$ connects $b_i$ to $b_j$ for some $j$ and $\gamma_3$ connects $b_j$ to $a_0$.
Moreover, $\Int \gamma_1, \Int \gamma_3 \subseteq \Int R_0$.

We claim that the length $l(\gamma_1) \geq K$.
\begin{proof}[Proof of the claim]
If no interior vertex of $\gamma_1$ is adjacent to $a_0$, then $l(\gamma_1) \geq K$ by the minimality of the definition for $K$.

Otherwise, let $\beta_1 \subseteq \beta$ be the preimage of $\gamma_1$ in $\beta$.
Since $\beta$ is a local geodesic connecting $b_l$ to $b_0$, no interior vertex of $\beta_1$ is adjacent to $b_0$.
Since $f^{-1}(a_0) = \{b_0, a_1\}$, there exists some interior vertex $x\in \beta_1$ that is adjacent to $a_1$.
Consider the truncation $\beta_1' \subseteq \beta_1$ that connects $b_l$ and $x$, and $\widetilde{\beta_1} = \beta_1' \cup [x, a_1]$.
Then $\tau(\widetilde{\beta_1})$ satisfies that
\begin{itemize}
    \item $\Int \tau(\widetilde{\beta_1}) \subseteq \Int R_0$;
    \item $\tau(\widetilde{\beta_1})$ connects $a_l$ to $b_1$;
    \item no interior vertex of $\tau(\widetilde{\beta_1})$ is adjacent to $a_0$.
\end{itemize}
Thus, by minimality of $K$, we have 
$$
K \leq l(\tau(\widetilde{\beta_1})) = l(\widetilde{\beta_1}) = l(\beta_1') + 1 \leq l(\beta_1) = l(\gamma_1).
$$
\end{proof}

By Proposition \ref{prop:sc}, any simple closed curve in $\mathcal{G}$ has length $\geq 2l$.
Thus, for any two vertices $v,w$, there exists at most one path connecting $v, w$ with length $<l$.
Moreover, if there is a path connecting $v, w$ with length $l$, then all the other paths have length $\geq l$.
Since $b_i, b_j, a_0$ all lie on a shortest loop $\mathcal{C}_1$, we have that the lengths $l(\gamma_2), l(\gamma_3) \geq l$.
So 
$$
l(\beta) = l(\gamma_1) + l(\gamma_2) + l(\gamma_3) \geq K+2l > N,
$$ 
which is a contradiction to Equation \ref{eq:bd}.

We now show that $f(\beta)$ is a local geodesic. Suppose not. Let $\delta$ be the associated local geodesic.
Since $\beta$ is a local geodesic and is contained in $\overline{R_0}$, the (non-simple) closed loop by concatenating $f(\beta)$ (from $a_l$ to $a_0$) with $\delta$ (from $a_0$ to $a_l$) separates the two critical values $b_0, f(a_l)$ with winding number $1$.
Thus, $\delta$ is an $R_0$-arc with a lift connecting $b_l$ and $a_1$.
Note that $l(\delta) < l(\beta) \leq N$, which is a contradiction to the minimality of $N$.

Since length $l(f(\beta)) = l(\beta) \leq N$, $f(\beta) \in A$.
\end{proof}

We are ready to prove Proposition \ref{lem:gap}.
\begin{proof}[Proof of Proposition \ref{lem:gap}]
By Lemma \ref{lem:sm}, we have an induced map $f_*:B \longrightarrow A$.
This map is clearly injective as $f$ is injective on $\Int R_0$.
Since $A, B$ have the same cardinality, $f_*$ is also surjective.
On the other hand, by the definition of $N$, there exists $\alpha \in A$ whose lift connects $b_l$ to $a_1$, so this arc $\alpha$ is not in the image of the induced map $f_*$.
This is a contradiction, and the proposition follows.
\end{proof}

\end{document}